\def\showcomments{True}
\newtheorem{definition}{Definition}[section]
\newtheorem{lemma}[definition]{Lemma}
\newtheorem{fact}[definition]{Fact}
\newtheorem{theorem}[definition]{Theorem}
\newtheorem{cor}[definition]{Corollary}
\newtheorem*{theorem*}{Theorem}
\newtheorem*{lemma*}{Lemma}
\renewcommand{\le}{\leqslant}
\renewcommand{\leq}{\leqslant}
\renewcommand{\ge}{\geqslant}
\renewcommand{\geq}{\geqslant}
\newcommand{\bigO}{\mathcal{O}}
\newcommand{\RN}{\mathrm{RN}}
\newcommand{\LN}{\mathrm{LN}}
\newcommand{\AN}{\mathrm{LC}}
\newcommand{\LC}{\mathrm{LC}}
\newcommand{\Prc}[1]{\mathbf{Pr} \left( #1 \right)}
\newcommand{\Prob}[2]{\mathbf{Pr}_{#1}\left(#2\right)}
\newcommand{\Expcc}[1]{\mathbf{E} \left[ #1 \right]}
\newcommand{\Varcc}[1]{\mathbf{Var} \left[ #1 \right]}
\newcommand{\Erdos}{Erd\H{o}s-R\'enyi}
\newcommand{\IC}{$\mathrm{IC}$\xspace}
\newcommand{\SIR}{$\mathrm{SIR}$\xspace}
\newcommand{\SIRimmtrunc}{$L$-\textsc{visit}\xspace}
\newcommand{\SIRimmtruncpar}{\textsc{parallel}\,$L$-\textsc{visit}\xspace}
\newcommand{\percgraph}{{G}_p}
\newcommand{\SWG}{$\mathcal{SWG}(n,q)$\xspace}
\newcommand{\SWGm}{\mathcal{SWG}}
\newcommand{\SWGreg}{3-$\mathcal{SWG}(n)$\xspace}
\newcommand{\dist}{\textrm{dist}}
\newcommand{\good}{\emph{free}\xspace}
\newcommand{\enqueue}{\texttt{enqueue}}
\newcommand{\dequeue}{\texttt{dequeue}}
\newcommand{\condpar}{|Q_{t-1}|=i,|R_{t-1}|=r}
\newcommand{\Bin}{\mathrm{Bin}}
\newcommand{\True}{\texttt{True}}
\newcommand{\False}{\texttt{False}}
\algnewcommand\algorithmiccase{\textbf{case}}
\title{Percolation and Epidemic Processes \\ in \\ 
One-Dimensional Small-World Networks\thanks{LT's work  on this project has received funding from the
European Research Council (ERC) under the European Union's Horizon 2020
research and innovation programme (grant agreement No. 834861).  LB's work on
this project was partially supported by the ERC Advanced Grant 788893 AMDROMA,
the EC H2020RIA project ``SoBigData++'' (871042), the MIUR PRIN project
ALGADIMAR. AC's and FP's work on this project was partially supported by the
University of Rome ``Tor Vergata'' under research program ``Beyond Borders''
project ALBLOTECH (grant no. E89C20000620005)}}
\author{Luca Becchetti\\
    {\footnotesize{}Sapienza Università di Roma}\\
    {\footnotesize{} Rome, Italy}\\
    {\footnotesize{}\texttt{becchetti@dis.uniroma1.it}} 
    \and Andrea Clementi \\ 
    {\footnotesize{}Università di Roma Tor Vergata}\\
    {\footnotesize{} Rome, Italy}\\
    {\footnotesize{}\texttt{clementi@mat.uniroma2.it}} 
    \and 
    Riccardo Denni\\
    {\footnotesize{}Sapienza Università di Roma}\\
    {\footnotesize{} Rome, Italy}\\
    {\footnotesize{}\texttt{denni@diag.uniroma1.it}}
    \and Francesco Pasquale \\
    {\footnotesize{}Università di Roma Tor Vergata}\\
    {\footnotesize{} Rome, Italy}\\
    {\footnotesize{}\texttt{pasquale@mat.uniroma2.it}} 
    \and Luca Trevisan \\
    {\footnotesize{}Università Bocconi}\\
    {\footnotesize{} Milan, Italy}\\
    {\footnotesize{}\texttt{l.trevisan@unibocconi.it}} 
    \and Isabella Ziccardi\\
    {\footnotesize{}Università dell'Aquila}\\
    {\footnotesize{}L'Aquila, Italy}\\
    {\footnotesize{}\texttt{isabella.ziccardi@graduate.univaq.it}} 
}
\date{}
\begin{document}

\maketitle
%\newpage
%\setcounter{page}{1}
\begin{abstract}

We obtain tight thresholds for bond percolation on one-dimensional small-world graphs, and apply such results to obtain tight thresholds for the \emph{Independent Cascade} process and the \emph{Reed-Frost} process in such graphs.

These are the first fully rigorous results establishing a  phase transition for bond percolation and SIR epidemic processes  in small-world graphs. Although one-dimensional small-world graphs are an idealized and unrealistic network model, a number of realistic qualitative epidemiological phenomena emerge from our analysis, including the epidemic spread     through a sequence of local outbreaks, the danger posed by random connections, and the effect of super-spreader events. 
\end{abstract}

\medskip \noindent \textbf{Keywords:} Random graphs, Percolation, Branching
Processes, Epidemic models, Independent Cascade, Small-World Graphs.

%\medskip \noindent \textbf{Keywords:} Epidemic models, Independent Cascade,
%Small-World Graphs, Percolation, Branching Processes.

\setcounter{page}{1}

\section{Introduction} \label{sec:newintro}

Given a graph $G=(V,E)$ and a bond percolation probability $p$, the bond percolation process is to subsample a random graph $G_p= (V,E_p)$ by independently choosing each edge of $G$ to be included in $E_p$ with probability $p$ and to be omitted with probability $1-p$. We will call $G_p$ the {\em percolation graph} of $G$. The main questions that are studied about this process are whether $G_p$ is likely to contain a large connected component, and what are the typical distances of reachable nodes in $G_p$. 

The study of percolation originates in mathematical physics, where it has often been studied in the setting of infinite graphs, for example infinite lattices and infinite trees \cite{KetAl80,NewmanS86,VK71}. The study of percolation on finite graphs is of interest in computer science,  because of its relation, or even equivalence, to a number of fundamental problems in network analysis \cite{KKT15,EK10,LRSV14,ABS04} and in  distributed and parallel computing \cite{CPGE19,KNT94}.

For example, the percolation process arises in the study of {\em network reliability} in the presence of independent link failures \cite{KNT94,KL19}; in this case one is typically interested in {\em inverse problems}, such as designing networks that have a high probability of having a large connected component for a given edge failure probability $1-p$.

This paper is motivated by the equivalence of the percolation process with the {\em Independent Cascade} process, which models the spread of information in networks \cite{KKT15,EK10}, and with the {\em Reed-Frost} process of \emph{Susceptible-Infectious-Recovered} (\emph{SIR}) epidemic spreading \cite{CWLC13,VespietAl15}.

In a SIR epidemiological process, every person, at any given time, is in one of three possible states: either susceptible (S) to the infection, or  actively infectious and able to spread the infection (I), or recovered (R) from the illness, and immune to it.

In a network SIR model, we represent people as nodes of a graph, and contacts between people as edges, and we have a probability $p$ that each contact between an infectious person and a susceptible one transmits the infection. The Reed-Frost process, which is the simplest SIR network model,   proceeds through synchronous time steps, the infectious state lasts for only one time step, and the graph does not change with time.

The Information Cascade process is meant to model information spreading in a social network, but it is essentially equivalent to the Reed-Frost process.\footnote{The main difference is that Information Cascade allows the probability of ``transmission'' along an edge $(u,v)$ to be a quantity $p_{(u,v)}$, but this generalization would also make sense and be well defined in the Reed-Frost model and in the percolation process. The case in which all the probabilities are equal is called the {\em homogenous} case.}

If we run the Reed-Frost process on a graph $G=(V,E)$ with an initial set $I_0$ and with a probability $p$ that each contact between an infectious and a susceptible person leads to transmission, then the resulting is equivalent to percolation on the graph $G$ with parameter $p$ in the following sense: the set of vertices reachable from $I_0$ in the percolation graph $G_p$ has the same distribution as the set of nodes that are recovered at the end of the Reed-Frost process in $G$ with $I_0$ as the initial set of infected nodes. Furthermore, the set of  nodes infected in the first $t$ steps (that is, the union of infectious and recovered nodes at time $t$) has the same distribution as the set of nodes reachable in the percolation graph $G_p$ from $I_0$ in at most $t$ steps\footnote{A detailed description of   this equivalence is given in Appendix \ref{ssec:equi}.}.

Information Cascade and Reed-Frost processes on networks are able to capture a number of features of real-world epidemics, such as the fact that people typically have a small set of close contacts with whom they interact frequently, and more rare interactions with people outside this group, that different groups of people have different social habits that lead to different patterns of transmissions, that outbreaks start in a localized way and then spread out, and so on. Complex models that capture all these features typically have a large number of tunable parameters, that have to be carefully estimated, and have a behavior that defies rigorous analysis and that can be studied only via simulations.

In this work we are interested in finding the simplest model, having few parameters and defining a simple process, in which we could see the emergence of complex phenomena.

\subsubsection*{One-dimensional small-world graphs}
% We choose to analyze the Reed-Frost process, which is the simplest network epidemic process, on {\em one-dimensional small-world} graphs, which are the simplest generative model of networks in which there is a meaningful distinction between local connection (corresponding to close contacts such as family and coworkers) and long-range connections (corresponding to occasional contacts such as being seated next to each other in a restaurant or a train).
% =======
We choose to analyze the Reed-Frost process on {\em one-dimensional small-world} graphs, which is a fundamental generative model of networks in which there is a  distinction between local connection (corresponding to close contacts such as family and coworkers) and long-range connections (corresponding to occasional contacts such as being seated next to each other in a restaurant or a train).

Small-world graphs are a class of probabilistic generative models for graphs introduced by Watts and Strogatz \cite{watts1998collective}, which are obtained by overlaying a low-dimensional lattice with additional random edges. A one-dimensional small-world graph is a cycle overlayed with additional random edges. In the original works of Watts and Strogatz a one-dimensional small-world network is obtained by starting from a cycle, adding edges between any pair of nodes at distance at most $k$ (one of the parameters of the construction) along the cycle, then selecting a random subset of edges (the density of this subset is another parameter of the construction) and re-routing them, where the operation of re-routing an edge is to re-assign one of the endpoints of the edge to a random vertex. 

% Because of our interest in studying the simplest models, with the fewest number of parameters, in which we can observe complex emergent behavior, we consider a simplified generative model in which the distribution of one-dimensional small-world graphs with parameter $q$ on $n$ vertices is simply the union of a cycle with $n$ vertices with an \Erdos\ random graph $\mathcal{G}_{n,q}$, in which edges are sampled independently, and each pair of nodes has probability $q$ of being an edge. 

Because of our interest in studying the most basic models, with the fewest number of parameters, in which we can observe complex emergent behavior, we consider the following  simplified generative model which was   introduced in \cite{MCN00} and often  adopted in different network applications   \cite{GGT03,MN00b,newman1999scaling}:  the distribution of \emph{one-dimensional small-world graphs} with parameter $q$ on $n$ vertices is just the union of a cycle with $n$ vertices with an \Erdos\ random graph $\mathcal{G}_{n,q}$, in which edges are sampled independently and each pair of nodes has probability $q$ of being an edge.

We will focus on the sparse case in which $q=c/n$, with $c$ constant, so that the overall graph has average degree $c+2$ and maximum degree that is, with high probability, $O(\log n / \log \log n)$. As we will see, we are able to determine, for every value of $c$, an exact threshold for the critical probability of transmission and to establish that, above the threshold, the epidemic spreads with a realistic pattern of a number of localized outbreaks that progressively become more numerous.

We are also interested in modeling, again with the simplest possible model and with the fewest parameters, the phenomenon of {\em superspreading}, encountered both in practice and in simulations of more complex models. This is the phenomenon by which the spread of an epidemic is disproportionately affected by rare events in which an infectious person contacts a large number of susceptible ones. To this end, we also consider a generative model of small-world 1-dimensional graphs obtained as the union of a cycle with a random perfect matching. This generative model has several statistical properties in common with the $c=1$ instantiation of the above generative model: the marginal distribution of each edge is the same, and edges are independent in one case and have low correlation in the random matching model. The only difference is the degree distribution, which is somewhat irregular (but with a rapidly decreasing exponential tail) in one case and essentially 3-regular in the second case. As we will see, we are able to determine an exact threshold for this latter model as well, and it notably differs from the previous model.

Before proceeding with a statement of our results, we highlight for future reference the definitions of our generative models.
   
\begin{definition}[$1$-Dimensional Small-World Graphs - \SWG ]\label{def:small-world}
For every  $n \geq 3$ and $0 \leq q \leq 1$, the distribution \SWG\ is sampled
by generating a one-dimensional small-world graph $G=(V,E)$, where $|V|=n$, $E
= E_1 \cup E_2$, $(V,E_1)$ is a cycle, and $E_2$ is the set of random edges, called \emph{bridges}, of
an \Erdos\ random graph ${\mathcal G}_{n,q}$.
\end{definition}

%The above small-world graph model generates sparse graphs with unbounded
%degree. The maximum  degree of a graph sampled from $\SWGm(n,1/n)$, like the
%maximum degree of a ${\mathcal G}_{n,1/n}$ random graph, is on average
%$\Theta(\log n/\log\log n)$.

\begin{definition}[$3$-regular $1$-Dimensional  Small-World Graphs - \SWGreg ]\label{def:small-world-3reg}
For every  even $n \geq 4$, the distribution  \SWGreg\ is sampled by generating
a one-dimensional small-world graph  $G=(V, E)$, where $|V|=n$, $E = E_1 \cup
E_2$, $(V,E_1)$ is a cycle, and $E_2$ is the set of edges, called \emph{bridges},  of a uniformly chosen
perfect matching on $V$.
\end{definition}

In the definition of \SWGreg, we allow edges of the perfect matching to belong
to $E_1$. If this happens, only edges in $E_2-E_1$ are called bridges. The
graphs sampled from \SWGreg have maximum degree 3, and every node has degree 3
or $2$. On average, only $\bigO(1)$ nodes have degree 2. This is why, with a
slight abuse of terminology, we refer to these graphs as being ``3-regular.''

\section{Our Contribution } \label{sec:ourresults}
%We study      Watts-Strogatz models $\SWGm$ where the graphs $G=(V, E_1 \cup E_2)$  where  $E_1$ is a cycle (representing the local connections) and $E_2$ is the subset of    random edges, which we call {\em bridges}.   As for the latter, we consider the    two alternatives:

%We observe that  each node in the above small-world graph $G$ has always degree bounded by $3$\footnote{If we let $G$ to be a multigraph, then the graph is 3-regular.}.

%\begin{itemize} 

%\item The subset $E_2$ of bridges   form a random perfect matching over the nodes of the
%cycle (this will assume that the number  $n$ of nodes is even), giving rise to
%a graph of maximum degree 3.
%We denote the resulting distribution of graphs as \SWGreg;

%\item  The  bridges in   $E_2$ are  selected as in an \Erdos\ random graph
%$\mathcal{G}_{n,q}$. We denote the resulting distribution of graphs as \SWG. We
%will consider the case $q=c/n$ for any fixed constant $c > 0$, noticing that
%the case $c=1$ yields a number of  bridge edges that is concentrated around
%$n/2$, similarly to the first case. 

%\end{itemize}

\subsection{Tight thresholds for bond percolation} \label{ssec::ourres}
Our main results are to establish sharp thresholds for the critical percolation probability $p$ in both
models. In particular, we are interested in fully rigorous analysis that hold in high concentration (i.e., with high probability), avoiding mean-field
approximations or approximations that treat certain  correlated events as independent, which are common in the analysis of complex networks in
the physics literature. While such approximations are necessary when dealing
with otherwise intractable problems, they can fail to capture subtle
differences between models. For example, for $q = 1/n$, the marginal distributions of 
bridge edges are the same in the two models above, while correlations between edges are non-existing in the \SWG\ model
and very small in the \SWGreg model. Yet, though the two models have similar
expected behaviors and are good approximations of each other, our rigorous
analysis shows that the two models exhibit notably different thresholds.

%Our main results are to establish sharp thresholds for the critical percolation probability $p$ in both

 As for the the \SWG\ model, we show the following threshold behaviour of the bond-percolation process.

\begin{theorem}[Percolation on the \SWG model] 
\label{thm:intro-erdos-2}
Let $V$ be a set of $n$ vertices and $p>0$ be a bond percolation  probability. For any constant
$c>0$,  sample a graph $G=(V, E_1 \cup E_2)$ from the $\SWGm(n,c/n)$ distribution, and consider the percolation graph $G_p$. For any constant $\varepsilon>0$:
\begin{enumerate}
\item If $p>\frac{\sqrt{c^{2} + 6c +1}-c-1}{2c} + \varepsilon$,  w.h.p.\footnote{As usual, we say that an event
$\mathcal{E}_n$ occurs \emph{with high probability} if $\Prc{ \mathcal{E}_n}
\geq 1-(1/n)^{\Omega(1)}$.}
      a subset of nodes of size $\Omega_\varepsilon(n)$ exists 
      that induces   a   subgraph  of $G_p$ having  diameter $\bigO_{\varepsilon}(\log n)$;
\item If  $p<\frac{\sqrt{c^{2} + 6c +1}-c-1}{2c} - \varepsilon$,  w.h.p.  all the connected components of  $G_p$ have size $\bigO_{\varepsilon}(\log n)$.
\end{enumerate}
\end{theorem}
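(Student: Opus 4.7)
The plan is to analyse the exploration of the percolation graph $G_p$ from a vertex $v_0$ via a branching-process coupling whose \emph{individuals} are \emph{arcs}: maximal runs of consecutive cycle vertices still mutually connected through surviving cycle edges of $G_p$. When we enter an arc at a vertex $v$ through a bridge, the arc extends in each cycle direction as a sequence of independent Bernoulli$(p)$ trials, so its expected size is $L = (1+p)/(1-p)$. Each vertex of the arc independently carries a $\Bin(n-\bigO(1),cp/n)$ number of surviving bridges in $G_p$, each pointing to an approximately uniform vertex of $V$. The expected number of outgoing bridges from a newly discovered arc, excluding the bridge used to enter it, is therefore
\[
\mu \;=\; L \cdot cp - 1 \;=\; \frac{(1+p)\,cp}{1-p} - 1,
\]
and setting $\mu = 0$ yields the quadratic $cp^2 + (c+1)p - 1 = 0$, whose unique positive root is exactly $p^\star = \frac{\sqrt{c^2+6c+1}-c-1}{2c}$, matching the threshold in the statement.

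For the subcritical regime $p < p^\star - \varepsilon$, I would dominate the exploration from above by a subcritical Galton--Watson process on arcs, whose offspring distribution is a compound Binomial with all exponential moments. Standard exponential tail bounds on the total progeny give that at most $\bigO_\varepsilon(\log n)$ arcs are ever explored, except with probability $n^{-\Omega(1)}$; summing the sizes of these arcs, each sub-exponentially distributed with bounded mean, bounds the total number of explored vertices by $\bigO_\varepsilon(\log n)$ w.h.p., and a union bound over the choice of starting vertex $v_0$ establishes part~(2) of the theorem.

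For the supercritical regime $p > p^\star + \varepsilon$, the coupled branching process has survival probability $\rho > 0$. The coupling remains faithful as long as fewer than $n^{2/3}$ vertices have been discovered, since each freshly sampled bridge endpoint is a revisit only with probability $o(1)$. I would first show that, with probability $\rho - o(1)$, the exploration from $v_0$ reaches $n^{2/3}$ vertices within $\bigO_\varepsilon(\log n)$ steps; repeating the exploration from $\Theta(\log n)$ vertex-disjoint seeds, w.h.p.\ many succeed, producing several clusters of size $\Omega(n^{2/3})$. A standard two-round sprinkling argument (exposing each bridge with probability $p(1-\delta)$ in a first round, then with the complementary conditional probability in a second) then merges these clusters w.h.p.\ into a single component $S$ of size $\Omega_\varepsilon(n)$. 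For the diameter part, BFS in $G_p$ from any $u \in S$ stays in $S$ and, by the same branching-process growth, reaches $\Omega(\sqrt{n})$ vertices of $S$ within $\bigO_\varepsilon(\log n)$ steps; a birthday-type meeting argument between two such BFS trees grown from arbitrary $u, v \in S$ yields $\mathrm{diam}(G_p[S]) = \bigO_\varepsilon(\log n)$.

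The main obstacle is the sprinkling step: merging medium-sized clusters into a single linear-size component while controlling the dependencies created by the earlier exposure of the randomness, and quantifying precisely how the branching-process coupling degrades once $\Omega(n)$ vertices have been explored. Routine first-moment arguments additionally dispose of low-probability pathologies, such as bridges landing between cycle-neighbours, that would otherwise slightly distort the clean branching-process picture.
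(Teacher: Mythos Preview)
Your branching-process picture, with individuals being arcs (local clusters) of expected size $L=(1+p)/(1-p)$ and offspring given by outgoing percolated bridges, is exactly the paper's heuristic and gives the correct threshold; the subcritical bound (domination by a subcritical Galton--Watson process, exponential tails on the total progeny, union bound over starting vertices) is essentially the paper's proof of Claim~2. One quibble: conditioning on the incoming bridge $(u,v)$ does \emph{not} reduce the expected number of other surviving bridges at $v$ (edges of $\mathcal G_{n,c/n}$ are independent), so a non-root arc has expected offspring $Lcp$, not $Lcp-1$, and the criticality criterion is offspring mean $=1$, not ``$\mu=0$''; your two slips happen to cancel.

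The supercritical argument has a real gap. You stop the coupling once $n^{2/3}$ vertices are discovered, start from $\Theta(\log n)$ seeds, and assert that sprinkling merges the resulting clusters into a component $S$ of size $\Omega(n)$. But $\Theta(\log n)$ sets of size $n^{2/3}$ have total size $O(n^{2/3}\log n)$; sprinkling can connect them, not enlarge them. A genuine sprinkling proof would first need to show that a \emph{linear} number of vertices lie in components of size $\geq n^{2/3}$, which requires a separate concentration argument you have not supplied and which $\Theta(\log n)$ seeds cannot provide. As written, you have no mechanism to push any exploration from $n^{2/3}$ to linear size, and the birthday/diameter step inherits the same problem.

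The paper avoids sprinkling entirely and instead keeps the Galton--Watson domination valid all the way up to $n/k$ visited nodes, via two devices: local clusters are \emph{truncated} at a constant length $L$, and a bridge endpoint $x$ is followed only if it is ``free'', i.e.\ at ring distance $>L$ from every previously visited node, so that newly-added truncated clusters are guaranteed disjoint and the per-step offspring expectation stays above $1+\varepsilon'$ as long as only an $O(1/k)$ fraction of the graph has been consumed. For the w.h.p.\ statement the paper bootstraps repeatedly from fresh sources until one reaches queue size $\beta\log n$ (this takes $O(\log n)$ attempts w.h.p.), then switches to a \emph{parallel} visit in which the queue provably grows by a factor $1+\delta$ per round (via a concentration bound for sums of locally-dependent indicators), directly yielding $\Omega(n)$ vertices within $O(\log n)$ BFS levels and hence the diameter bound.
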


Some remarks are in order. In the theorem above, probabilities are taken
both over the randomness in the generation of the graph $G$ and over the
randomness of the percolation process. 
%We discuss interesting  ``information-spreading'' consequences of  the  theorem above   in %Subsection \ref{ssec:consepid}, while
We highlight  the sharp result
  on the $\SWGm(n,c/n)$ model for the case $c = 1$:
similarly to the regular \SWGreg model, each node here has one bridge
edge in average, and the obtained critical value for the percolation probability
$p$ turns out to be $\sqrt{2}-1$. An analysis of the critical value for
the \SWGreg model is given by the next two results, while a detailed comparison of the two
models is provided in Subsection \ref{ssec:consepid}, after Theorem \ref{thm:main_matching}.
%%%% after Theorem~\ref{th:reg.upper} below.

\iffalse 
The lower bound in Claim 1 of
the theorem are proved by studying a BFS-like visit of the percolation graph.
Starting from an arbitrary node, we first
establish   that once the visit has
at least $\beta_\varepsilon \log n$ nodes in the queue, it has a high probability
of finding $\Omega_\varepsilon (n)$ nodes within distance $\bigO_\varepsilon (\log
n)$ from the nodes visited up to that point.  To prove the first part of   Claim 1, we then show that there is at least constant probability that, in the first
$\bigO_\varepsilon (\log n)$ steps of the visit, the queue grows to a size at least
$\beta_\varepsilon \log n$. 
\fi

\begin{theorem}[Percolation on the  \SWGreg model]\label{thm:main_matching}
Let $V$ be a set of $n$ vertices  and $p>0$ be a bond percolation  probability. Sample a graph $G=(V, E_1 \cup E_2)$ from the \SWGreg distribution,  and consider the percolation graph $G_p$. For any constant $\varepsilon>0$:

\begin{enumerate}
\item If $p>1/2+\varepsilon$, w.h.p.  a subset of nodes of size $\Omega_\varepsilon(n)$ exists   that induces  a  connected  subgraph (i.e. a \emph{giant} connected component)  of $G_p$;

%\item Let $p>1/2+\varepsilon$ and  $|I_0|\geq\beta_\varepsilon\log n$ for a sufficiently large constant $\beta_\varepsilon$ (that depends on $\varepsilon$ but not on $n$). Then w.h.p., there are  $\Omega_\varepsilon(n)$ connected to $I_0$ in $G_p$;

\item If $p<1/2-\varepsilon$, w.h.p. all the connected components of  $G_p$ have size $\bigO_{\varepsilon}(\log n)$.
\end{enumerate}
\end{theorem}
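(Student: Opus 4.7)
The plan is to establish the threshold at $p=1/2$ by coupling a BFS exploration of $G_p$ with a branching process of offspring mean $2p$. Each vertex has degree at most three in $G$ (two cycle edges plus one matching edge, which may coincide with a cycle edge), so when BFS reaches a new vertex via one of its edges, at most two unvisited edges remain, each independently open in $G_p$ with probability $p$. Solving $2p=1$ yields the critical probability.

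\emph{Subcritical direction ($p<1/2-\varepsilon$).} I would fix a starting vertex $v$ and explore its component $C(v)$ via BFS on $G_p$. The exploration is stochastically dominated by a Galton--Watson process with offspring distribution $\mathrm{Bin}(2,p)$: matching edges landing on an already-visited vertex, or coinciding with a cycle edge, can only reduce the number of new children. Since $2p\leq 1-2\varepsilon<1$, this process is subcritical, and its total progeny $T$ satisfies the standard exponential-tail bound $\Pr[T\geq k]\leq e^{-c_\varepsilon k}$. Choosing $k=C_\varepsilon\log n$ and union-bounding over all $n$ starting vertices yields Part 2.

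\emph{Supercritical direction ($p>1/2+\varepsilon$).} I would use a two-stage argument analogous to the sketch for the $\SWGm$ model. In Stage 1, from a fixed vertex $v$, the BFS queue reaches size $\geq\beta_\varepsilon\log n$ within $\bigO_\varepsilon(\log n)$ steps with some constant probability $\rho_\varepsilon>0$: up to depth $\log n$, both matching-edge collisions with already-visited vertices and matching/cycle coincidences occur with total probability $\bigO((\log n)^2/n)=o(1)$, so the BFS \emph{minorizes} a supercritical $\mathrm{Bin}(2,p)$-GW process of mean $>1+\varepsilon$, which survives and grows geometrically with constant probability. In Stage 2, once the queue reaches size $\geq\beta_\varepsilon\log n$, the BFS reaches $\Omega_\varepsilon(n)$ vertices with probability $1-n^{-\Omega(1)}$: letting $Q_t$ denote the queue size and $k_t$ the number of discovered vertices, the conditional drift $\mathbf{E}[Q_{t+1}-Q_t\mid\text{past}]=2p(1-k_t/n)-1\geq\varepsilon$ holds as long as $k_t\leq\varepsilon n$, and an Azuma--Hoeffding-type bound for the random walk $Q_t$ starting at $\beta_\varepsilon\log n$ with positive drift $\geq\varepsilon$ and $\bigO(1)$-bounded increments keeps $Q_t$ positive for at least $\varepsilon n$ steps. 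Combining both stages, the probability that a fixed $v$ lies in a component of size $\Omega_\varepsilon(n)$ is at least a constant $\rho_\varepsilon>0$; a standard argument (e.g., a second-moment estimate on the number of such ``good'' starting vertices, or a sharp-threshold argument exploiting monotonicity of the giant-component event at an auxiliary parameter $p_0\in(1/2,p)$) upgrades this to w.h.p.

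The main obstacle is the incremental exposure of the random perfect matching. For both the minorization in Stage 1 and the drift estimate in Stage 2, the matching edges must be revealed in stages in lockstep with the BFS, and the conditional distribution of the remaining unmatched part — a uniformly random matching on a shrinking vertex set — must be carefully tracked. Without this bookkeeping, the ``remaining'' matching endpoints cannot be treated as essentially uniform over the unvisited vertices, which is precisely what drives both the minorization and the drift calculation.
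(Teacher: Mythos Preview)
Your subcritical argument is correct and coincides with the paper's: Part~2 is obtained as the $d=3$ case of Theorem~\ref{th:reg.upper}, whose proof is exactly the BFS-versus-$\mathrm{Bin}(d-1,p)$ domination you describe.

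For the supercritical direction you take a genuinely different route from the paper. You run an unmodified BFS and try to minorize it by a $\mathrm{Bin}(2,p)$ Galton--Watson process (offspring mean $2p$). The paper instead uses a modified visit that separates ring edges from bridge edges: when a vertex $w$ is dequeued, it exposes $w$'s bridge neighbor $x$ by deferred decision on the matching; if $x$ is ``free'' (at ring distance $>L$ from everything seen so far) and the bridge percolates, the algorithm adds the $L$-truncated ring local cluster of $x$ to the queue \emph{excluding $x$ itself}. The exclusion of $x$ is the key device: since $x$'s unique matching partner ($w$) has just been revealed, keeping $x$ in the queue would destroy the invariant that every queued vertex still has an unexposed bridge. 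With this invariant, one step removes one vertex and adds in expectation $p\cdot\big(\tfrac{1+p}{1-p}-1\big)=\tfrac{2p^2}{1-p}$ new ones, which crosses $1$ at $p=1/2$ just like your $2p$ does. The ``free'' restriction and the truncation $L$ guarantee by construction that ring edges in the new local cluster never hit already-visited vertices, so no collision accounting is needed.

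This is where your proposal has a real gap rather than just a stylistic difference. Your Stage~2 drift formula $\mathbf{E}[Q_{t+1}-Q_t\mid\text{past}]=2p(1-k_t/n)-1$ treats the two remaining edges symmetrically, but they are not: the bridge endpoint is (conditionally) uniform over unmatched vertices, whereas the ring endpoint is a \emph{specific} vertex whose visited/unvisited status is determined by the past, not random. At any step where the growing ring interval meets an earlier one, the conditional drift drops to roughly $p(1-k_t/n)-1<0$, so the per-step lower bound you feed into Azuma does not hold. One can repair this by bounding the total number of such collision steps (at most one per bridge landing that falls near an existing interval, hence $O(k_t^2/n)$ in expectation), but that bookkeeping is exactly what the paper's free-node/truncation machinery is designed to avoid. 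Your final ``upgrade to w.h.p.'' via a second-moment or sharp-threshold argument is also left vague; the paper does this concretely by repeating the sequential visit from fresh sources, each failed attempt deleting only $\bigO(\mathrm{polylog}\,n)$ vertices, until one succeeds (Algorithm~\ref{alg:bootstrap_randommatching}), so that after $\bigO(\log n)$ independent tries the queue reaches $\Omega(\log n)$ w.h.p.\ and the remaining drift argument finishes.
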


%In the above theorem, the first two  statements hold with constant probability over
%the randomness of the generation of $G$ and the randomness of the contagion
%process. 

Also in the above theorem, the probabilities are taken over the randomness of $G$ and  over the randomness of the percolation process process. The second claim is a special case of the following more general result of ours.

\begin{theorem}[Percolation on bounded-degree graphs] 
\label{th:reg.upper}
Let $G=(V,E)$ be a graph of maximum degree $d$, $\varepsilon >0$ be an
arbitrary positive number, $p < (1-\varepsilon)/(d-1)$  be a bond percolation  probability, and $I_0$ be a subset of $V$.  
  Consider the percolation graph $G_p$.  Then,
w.h.p.,  all the connected components of $G_p$ have size $\bigO_{\varepsilon}(\log n)$.
\end{theorem}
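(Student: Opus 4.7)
The plan is to dominate the BFS exploration of each connected component of $G_p$ by a subcritical Galton-Watson branching process, and then to combine a standard exponential tail bound on the total progeny with a union bound over the $n$ possible starting vertices.

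First, fix an arbitrary vertex $v \in V$ and perform a BFS on $G_p$ starting from $v$. Whenever the BFS pops a vertex $u$ from the queue and probes its incident edges, at most $d-1$ of these edges can lead to vertices not yet discovered (the one edge along which we reached $u$ is excluded, except at the root where all $d$ edges are fresh). Each of these at most $d-1$ candidate edges is independently in $G_p$ with probability $p$, and these edge-indicators have not been examined previously, because the BFS reveals each edge at most once. Hence, conditional on the history of the BFS, the number of newly discovered neighbors of $u$ is stochastically dominated by a $\mathrm{Bin}(d-1, p)$ random variable, independently for each $u$. Consequently, the size of the connected component of $v$ in $G_p$ is stochastically dominated by the total progeny $T$ of a Galton-Watson process with offspring distribution $\mathrm{Bin}(d-1, p)$ (starting from one individual, or at worst from $d$ independent copies, which contributes only a constant factor).

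Since the offspring distribution has mean $\mu = (d-1) p \leq 1 - \varepsilon < 1$, this branching process is subcritical. Write $T$ as the hitting time of $-1$ by the random walk $S_k = \sum_{i=1}^{k} (Z_i - 1)$, where the $Z_i$ are i.i.d.\ copies of $\mathrm{Bin}(d-1, p)$. Then $\{T \geq k\} \subseteq \{S_{k-1} \geq 0\}$, and since each increment $Z_i - 1$ is bounded (lying in $[-1, d-2]$) and has mean $\mu - 1 \leq -\varepsilon$, a standard Chernoff bound gives
\[
\Prc{T \geq k} \;\leq\; \Prc{S_{k-1} \geq 0} \;\leq\; \exp(-\gamma (k-1))
\]
for some constant $\gamma = \gamma(\varepsilon, d) > 0$. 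Choosing $k = C_\varepsilon \log n$ with $C_\varepsilon$ large enough so that $\gamma C_\varepsilon \geq 2$ yields $\Prc{T \geq C_\varepsilon \log n} \leq n^{-2}$.

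Finally, we take a union bound over all $n$ choices of the starting vertex $v$: with probability at least $1 - n^{-1}$, the BFS tree rooted at every vertex has size at most $C_\varepsilon \log n$, which means every connected component of $G_p$ has size $\bigO_\varepsilon(\log n)$. The only slightly delicate point is verifying the independence required for the stochastic domination; this is handled cleanly by the standard observation that the BFS reveals each edge's status exactly once, so the conditional distribution of fresh edges is untouched. Beyond this bookkeeping, the argument is entirely routine subcritical branching.
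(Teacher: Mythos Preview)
Your argument is correct and essentially identical to the paper's proof: both run a BFS, note that each dequeued vertex exposes at most $d-1$ fresh edges (each open independently with probability $p$ by deferred decisions), bound the probability that the exploration survives $t$ steps via a Chernoff-type bound on the sum of these Bernoulli trials, and conclude with a union bound over the $n$ starting vertices. The only cosmetic difference is that you package the domination as the total progeny of a $\mathrm{Bin}(d-1,p)$ Galton-Watson process and use the random-walk representation $\{T\geq k\}\subseteq\{S_{k-1}\geq 0\}$, whereas the paper counts the at most $t(d-1)+1$ Bernoulli trials directly and asks for at least $t$ successes; these are two phrasings of the same computation.
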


An overall view of our analysis, leading to all  the theorems above,  is provided in Section~\ref{sec:Proof Ideas}, while in the next subsection, we describe the main consequences of our analysis  for  the Independent-Cascade  protocol on the considered small-world   models.

\subsection{Applications to   epidemic  processes} \label{ssec:consepid}

As remarked in Section \ref{sec:newintro}, bond percolation with percolation probability   $p$ is equivalent to the   Reed-Frost   process (for short, RF process) with transmission probability $p$. Informally speaking, the nodes at hop-distance $t$ in the percolation graph $G_p$, from any fixed  source subset, are distributed exactly as those that will be informed (and activated) at  time $t$, according to the  RF process\footnote{We   remind that a  detailed description of   this equivalence is given in Appendix \ref{ssec:equi}.}.

In this setting, our analysis and results, we described in Subsection \ref{ssec::ourres}, have the following important consequences.

\begin{theorem}[The RF process  on the \SWG model] \label{thm:gen_overthershold}
Let $V$ be a set of $n$ vertices, $I_0 \subseteq V$ be a set of source nodes, and $p>0$ a constant probability. For any constant
$c>0$,  sample a graph $G=(V, E_1 \cup E_2)$ from the $\SWGm(n,c/n)$ distribution, and run the RF process with transmission probability $p$ over $G$ from $I_0$. For every $\varepsilon>0$, we have the following:

\begin{enumerate}
\item  If $p > \frac{\sqrt{c^{2} + 6c +1}-c-1}{2c} + \varepsilon$, with
probability $\Omega_\varepsilon(1)$ a subset of  $\Omega_\varepsilon(n)$ nodes  will be infectious within time $\bigO_\varepsilon(\log n)$, even if $|I_0| = 1$. Moreover, if  
$|I_0|\geq\beta_\varepsilon\log n$ for a sufficiently large constant
$\beta_\varepsilon$ (that depends only on $\varepsilon$), 
then the above event occurs  \emph{w.h.p.};
    
\item If $p<\frac{\sqrt{c^{2} + 6c +1}-c-1}{2c} - \varepsilon$,  w.h.p. the process will stop within $\bigO_\varepsilon(\log n)$ time steps, and the number of recovered nodes at the end of the process will be
$\bigO_\varepsilon(|I_0|\log n)$.
\end{enumerate}
\end{theorem}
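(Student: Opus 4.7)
The plan is to transfer each claim to bond percolation via the equivalence recalled in Appendix~\ref{ssec:equi}: the set of vertices that have been infectious at some time $\leq t$ in the RF process started from $I_0$ is distributed exactly as the set of vertices within hop-distance $t$ of $I_0$ in the percolation graph $G_p$. Thus time bounds translate into hop-distance bounds in $G_p$, and size bounds translate into reachable-set sizes. The probability space is unchanged, so whatever holds w.h.p.\ (or with constant probability) for $G_p$ holds with the same probability for the RF trajectory.

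Claim~2 is then immediate from Theorem~\ref{thm:intro-erdos-2}(2). W.h.p.\ every connected component of $G_p$ has size $O_\varepsilon(\log n)$, so the set of ever-infected nodes equals $\bigcup_{v\in I_0}C_v(G_p)$ and has size at most $|I_0|\cdot O_\varepsilon(\log n)$; moreover, since each component has diameter at most its size, the RF infection exhausts every $C_v$ within $O_\varepsilon(\log n)$ rounds, so the process halts within that time.

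For Claim~1 we reopen the BFS-based proof of Theorem~\ref{thm:intro-erdos-2}(1). That argument establishes two properties of the percolation BFS from a single vertex: (i) a \emph{takeoff} event, namely that the BFS queue grows to size $\beta_\varepsilon\log n$ within $O_\varepsilon(\log n)$ steps, occurs with constant probability $p_0=p_0(\varepsilon)>0$; and (ii) conditional on takeoff, the BFS discovers $\Omega_\varepsilon(n)$ further vertices within $O_\varepsilon(\log n)$ additional steps, w.h.p. Reading this BFS as the advancing RF infection front yields the single-source claim. For the amplification to w.h.p.\ when $|I_0|\geq\beta_\varepsilon\log n$, we process the sources sequentially under deferred decisions: for each source we run the percolation BFS for $O_\varepsilon(\log n)$ steps, halting if takeoff occurs. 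A failure exposes only $O_\varepsilon(\log n)$ vertices and their incident percolation coins, so after all attempts we have touched at most $O_\varepsilon(\log^2 n)$ vertices in total. W.h.p.\ the $O_\varepsilon(\log n)$-neighbourhood in $G\sim\SWGm(n,c/n)$ of a fresh source is disjoint from this exposed region, so the takeoff probability at that source, conditional on all previous failures, remains $\Omega(p_0)$. The probability that every source fails is then at most $(1-\Omega(p_0))^{\beta_\varepsilon\log n}=n^{-\Omega(\beta_\varepsilon)}$, which is $1/\mathrm{poly}(n)$ for $\beta_\varepsilon$ sufficiently large.

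The main obstacle is not the percolation analysis itself—already supplied by Theorem~\ref{thm:intro-erdos-2}—but making the deferred-decisions amplification precise: one has to verify that conditioning on the outcomes of previous BFS attempts leaves the slice of $G$ and of the percolation coins relevant to the next source essentially undisturbed, so that the single-source takeoff estimate transfers up to constants. This reduces to a routine birthday-style overlap estimate bounding the chance that two polylogarithmic neighbourhoods in $\SWGm(n,c/n)$ intersect, which is $O_\varepsilon(\log^3 n/n)$ and hence negligible.
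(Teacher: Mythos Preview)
Your treatment of Claim~2 and of the single-source part of Claim~1 is the paper's argument: reduce to percolation via Theorem~\ref{thm:equivalence}, invoke Theorem~\ref{thm:intro-erdos-2}(2) for the subcritical bound, and for a single source combine the sequential \SIRimmtrunc (Lemma~\ref{le:L-visit}) with the parallel growth lemma (Lemma~\ref{thm:exponential_growth}).

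For the multi-source amplification, however, you diverge from the paper. The paper's route (Algorithm~\ref{alg:union_L-visit} and Lemma~\ref{thm:bootstrap}) is much more direct: when $|I_0|\geq\beta_\varepsilon\log n$, Algorithm~\ref{alg:union_L-visit} sets $Q=I_0$, the bootstrap while-loop condition $|Q|<\beta\log n$ fails immediately, and one drops straight into the \SIRimmtruncpar. Lemma~\ref{thm:exponential_growth} then applies with $i=|I_0|\geq\beta\log n$ from the very first parallel step and gives the w.h.p.\ exponential growth directly---no amplification over sources, no disjointness argument, no birthday estimate. There is no need to show any single source ``takes off'': the $\Omega(\log n)$ sources collectively already form a large enough queue.

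Your sequential-retries approach is valid but more circuitous, and the birthday justification you sketch is not quite the right lever: the $O(\log n)$-neighbourhoods of consecutive sources on the ring can certainly overlap, since $I_0$ is adversarial. What actually makes your argument go through is that Lemma~\ref{le:L-visit} is stated with an arbitrary deleted set $D_0$ of size up to $\log^4 n$, so you can absorb all previously exposed vertices into $D_0$ and still get takeoff probability $\geq\gamma$ for any fresh source. This is precisely the mechanism the paper uses---but for a \emph{different} theorem, namely the w.h.p.\ existence of a giant component in Theorem~\ref{thm:intro-erdos-2}(1) (see Algorithm~\ref{alg:bootstrap} and Lemma~\ref{lem:bootstrap}), where the sources are chosen fresh from $V\setminus D$ rather than constrained to lie in a given $I_0$. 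So your multi-source argument is essentially that proof transplanted to sources in $I_0$; it works, but it buys nothing over the paper's one-line observation that a large $I_0$ already satisfies the hypothesis of the parallel-growth lemma.
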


As for the \SWGreg model, we get the following results for the Reed-Frost process.

\begin{theorem}[The RF process on the   \SWGreg model]\label{thm:main_matching_rf}
Let $V$ be a set of $n$ vertices, $I_0 \subseteq V$ be a set of source nodes, and $p>0$ be a bond percolation  probability. Sample a graph $G=(V, E_1 \cup E_2)$ from the \SWGreg distribution,  and run the RF protocol with transmission-probability $p$ over $G$ from $I_0$. For every $\varepsilon>0$, we have the following:

\begin{enumerate}
\item If $p>1/2+\varepsilon$, with probability $\Omega_{\varepsilon}(1)$, a subset of  $\Omega_\varepsilon(n)$ nodes  will be infectious within time $\bigO_\varepsilon( n)$, even if $|I_0| = 1$. Moreover, if  
$|I_0|\geq\beta_\varepsilon\log n$ for a sufficiently large constant
$\beta_\varepsilon$ (that depends on $\varepsilon$ but not on $n$), 
then the above event occurs  \emph{w.h.p.};

%\item Let $p>1/2+\varepsilon$ and  $|I_0|\geq\beta_\varepsilon\log n$ for a sufficiently large constant $\beta_\varepsilon$ (that depends on $\varepsilon$ but not on $n$). Then w.h.p., there are  $\Omega_\varepsilon(n)$ connected to $I_0$ in $G_p$;

\item If $p<\frac{\sqrt{c^{2} + 6c +1}-c-1}{2c} - \varepsilon$, then, w.h.p.,
  the process will stop within $O_\varepsilon(\log n)$ time steps, and the number of recovered nodes at the end of the process will be  $O_\varepsilon(|I_0|\log n)$.
\end{enumerate}
\end{theorem}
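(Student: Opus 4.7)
The plan is to exploit the Reed--Frost / bond-percolation equivalence recalled in Appendix \ref{ssec:equi}: under the natural coupling, the set of recovered nodes of the RF process started from $I_0$ with transmission probability $p$ coincides with the set of nodes reachable from $I_0$ in the percolation graph $G_p$, and the set of nodes infected within $t$ steps coincides with the set of nodes reachable within $t$ hops in $G_p$. Both parts of Theorem \ref{thm:main_matching_rf} then essentially reduce to the corresponding statements in Theorem \ref{thm:main_matching}.

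For Part~2 (sub-critical), the reduction is immediate: Theorem \ref{thm:main_matching} gives that w.h.p.\ every connected component of $G_p$ has size $O_\varepsilon(\log n)$, and hence diameter $O_\varepsilon(\log n)$. The recovered set is the union of components hit by $I_0$, so it has size $O_\varepsilon(|I_0|\log n)$, and each infection chain, confined to a single such component, dies out within $O_\varepsilon(\log n)$ time steps.

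For Part~1 (super-critical) with $|I_0|=1$, I would use the rotational invariance of the \SWGreg\ distribution: rotations of the cycle preserve the law of the graph, so for every fixed vertex $v$ we have $\Pr[v\in C]=\Expcc{|C|}/n$, where $C$ denotes the giant component of $G_p$. By Theorem \ref{thm:main_matching}, $|C|=\Omega_\varepsilon(n)$ w.h.p., hence $\Expcc{|C|}=\Omega_\varepsilon(n)$ and therefore $\Pr[v\in C]=\Omega_\varepsilon(1)$. Whenever $v\in C$, the RF process infects $|C|=\Omega_\varepsilon(n)$ nodes, and the time bound $O_\varepsilon(n)$ is automatic since any connected subgraph on $n$ vertices has diameter at most $n-1$.

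The most delicate step is the w.h.p.\ upgrade when $|I_0|\geq\beta_\varepsilon\log n$. The natural approach is a multi-source version of the BFS / branching-process analysis that is expected to underpin Theorem \ref{thm:main_matching}: for a single source, BFS in $G_p$ couples to a supercritical Galton--Watson process with survival probability $\rho_\varepsilon>0$, and a surviving BFS reaches $\Omega_\varepsilon(n)$ vertices. Running the exploration simultaneously from $|I_0|$ sources couples to a branching process with $|I_0|$ initial particles, whose extinction probability is at most $(1-\rho_\varepsilon)^{|I_0|}$; for $\beta_\varepsilon$ large enough (depending only on $\rho_\varepsilon$, hence on $\varepsilon$), this quantity is $n^{-\Omega_\varepsilon(1)}$, yielding w.h.p.\ infection of $\Omega_\varepsilon(n)$ nodes. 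The main obstacle is pushing this branching-process coupling through \emph{uniformly} in the adversarial placement of $I_0$: initial particles may be bunched along a short arc of the cycle, so the coupling must account for the fact that vertices and edges explored by one particle may block the exploration of another. I would handle this by revealing percolation edges incrementally, treating each failed exploration as consuming only $O_\varepsilon(\log n)$ further vertices, which leaves enough untouched randomness for the remaining particles to sustain the domination by the multi-source branching process.
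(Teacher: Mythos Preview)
Your treatment of Part~2 matches the paper exactly: it is a direct consequence of Claim~2 of Theorem~\ref{thm:main_matching} via the percolation/RF equivalence.

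For the single-source half of Part~1, your rotational-invariance argument is correct and genuinely different from the paper's. The paper does not invoke Theorem~\ref{thm:main_matching} here; instead it reproves the $\Omega_\varepsilon(1)$-probability statement directly by running the sequential $L$-visit (Algorithm~\ref{alg:L-visit-var-matching}) from the single source and coupling the queue to a supercritical Galton--Watson process (Lemma~\ref{lem:randommatching:analysis}, part~1, together with Lemma~\ref{lm:bp.lowerbound}). Your route is shorter and uses only the existence of the giant component; on the other hand, the paper needs the $L$-visit machinery anyway to establish Theorem~\ref{thm:main_matching} in the first place, so it is not doing extra work.

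For the multi-source half of Part~1, you raise the right concern (dependence between explorations from different sources) but then overcomplicate its resolution. The paper does \emph{not} run separate explorations and reason about interference; it simply initializes the queue $Q$ of the sequential $L$-visit with all of $I_0$ at once. The point is that the coupling established in the proof of Lemma~\ref{lem:randommatching:analysis}, which gives $|Q_t|\geq B_{2t}$ unless $|Q\cup R\cup D|\geq n/k$, depends only on the cardinality bound $|Q\cup R\cup D|\leq n/k$ and not on where the nodes of $I_0$ lie on the ring: the bridge neighbor of each dequeued node is uniform over the unrevealed vertices, and the ``free'' test fails with probability $O(L/k)$ regardless of the placement of $I_0$. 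Hence the branching-process domination holds with the process started from $|I_0|\geq\beta\log n$ particles, and no separate interference argument is needed. The paper then finishes via Lemma~\ref{lem:bp:growingfromlognton} (Hoeffding at each step, chained for $n$ steps); your $(1-\rho_\varepsilon)^{|I_0|}$ extinction bound would work just as well once this coupling is in place. The sequential-trials fix you sketch (failed explorations consuming $O(\log n)$ vertices each) is essentially the bootstrap of Algorithm~\ref{alg:bootstrap_randommatching}, which the paper deploys for Theorem~\ref{thm:main_matching}, where it is free to choose fresh sources, but not for Theorem~\ref{thm:main_matching_rf}, where $I_0$ is given.
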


We notice that the  first claim of each of the above two theorems, concerning the multi-source case, i.e. the case $|I_0| \geq \beta \log n$), are not  direct consequences of  (the corresponding first claims of)  Theorems \ref{thm:intro-erdos-2} and \ref{thm:main_matching}:  although each element of $I_0$ has constant probability of belonging to the ``giant component'' of the graph $G_p$, these events are not independent, and so it is not immediate that, when $|I_0|$ is of the order of $\log n$, at least an element of $I_0$ belongs to the giant component with high probability. Such claims    instead are non-trivial consequences of  our technical analysis.

 On the other hand, the second claims of the above two theorems are   simple consequences of   the corresponding  claims  of Theorems \ref{thm:intro-erdos-2} and \ref{thm:main_matching}.
As for general bounded-degree graphs, from Theorem \ref{th:reg.upper}, we can recover an  upper bound on the critical value of $p$ for the RF process
equivalent to that of Claim 2 of  Theorem \ref{th:reg.upper}  (we omit here the formal statement).

%%%%%%%%

From a topological point of view, because of a mix of local and random edges, epidemic spreading in the above
models proceeds as a sequence of outbreaks, a process that is made explicit in
our rigorous analysis, where we see the emergence of two qualitative
phenomena that are present in real-world epidemic spreading. 

One is that the presence of long-distance random connections has a stronger
effect on epidemic spreading than local connections, that, in   epidemic scenarios, might  motivate lockdown
measures that shut down long-distance connections.  This can be seen,
quantitatively, in the fact that the critical probability in a cycle is $p =
1$, corresponding to a critical \emph{basic reproduction number}\footnote{The quantity $R_0$ in a SIR process is the expected number of people that an infectious person transmits the infection to, if all the contacts of that person are susceptible. In the percolation view of the process, it is the average degree of the percolation graph $G_p$.} $R_0$ equal to 2.
On the other hand, the
presence of random matching edges or random $\mathcal{G}_{n,c/n}$ edges in the
setting $c = 1$ defines networks in which the critical $R_0$ is, respectively,
$1.5$ and $3\cdot (\sqrt 2 -1) \approx 1.24$, meaning that notably fewer local
infections can lead to large-scale contagion on a global scale.

The other phenomenon is that  the irregular networks of the $\SWGm(n,c/n)$
model in the case $c=1$ show a significantly lower critical probability, i.e.
$\sqrt 2 -1 \approx .41$, than the  critical value $.5$ of the nearly regular networks of
the \SWGreg model, though they have the same number of edges (up to lower order
terms) and very similar distributions. As a further evidence of this
phenomenon, we remark the scenario yielded by the random irregular networks
sampled from the $\SWGm(n,c/n)$ distribution with $c$ even smaller than $1$:
for instance, the setting $c = .7$, though yielding a much sparser topology
than the \SWGreg networks, has a critical probability which is still smaller
than $.5$. Moreover, this significant difference between the $\SWGm(n,c/n)$
model and the regular \SWGreg one holds even for more dense regimes. In detail,
Theorem~\ref{th:reg.upper} implies that the  almost-regular version of $\SWGm$
in which $c$ independent random matchings are added to the ring of $n$ nodes
has a critical probability at least $1/(c+1)$. Then, simple calculus shows that
the critical probability given by Theorem~\ref{thm:gen_overthershold} for the
$\SWGm(n,c/n)$ model is smaller than $1/(c+1)$, for any choice  of the
density parameter $c$.

The most significant difference between the two distributions above  is the
presence of a small number of high-degree vertices in $\SWGm(n,c/n)$,
suggesting that even a small number of ``super-spreader'' nodes can have major
global consequences.

\subsection{Extensions of our results for epidemic models} \label{ssec:extenandgen}

\noindent
\textbf{Non-homogenous transmission probability.}
While keeping our focus on the rigorous analysis of simplified models that still capture important emergent phenomena, we remark that our techniques  allow extentions of our results  to a natural non-homogenous bond-percolation process on small-world graphs, in which local edges percolate with probability $p_1$, while bridges percolates with probability $p_2$: our analysis in fact  
keeps the role of the two type of connections above  well separated from each other. 
We are inspired, for instance,  by epidemic scenarios in which the chances for any node to get infected/informed  by a local tie are significantly higher than those from sporadic, long ties. 

In this non-homogeneous setting, for the  \SWG model with $q= c/n$ for some absolute constant $c>0$, we can   prove  that, w.h.p.,  the Independent-Cascade protocol reaches $\Omega(n)$ nodes within $\bigO(\log n)$ time\footnote{The formal statement is similar to that for the homogeneous case in Theorem \ref{thm:gen_overthershold} and is given in  Appendix \ref{sec:further}.}   iff the following condition on the three   parameters of the process is satisfied
\begin{equation*}
  p_1 + c \cdot p_1 p_2 + c \cdot p_2 > 1 \, .
%\label{eq:threshold_two_prob}
\end{equation*}

Some remarks are in order. 
In the case  $c=1$, the  formula above shows a perfect symmetry in the role of the two bond probabilities $p_1$ and $p_2$. In a graph sampled from $\SWGm(n,1/n)$,  however,  the overall number of local ties (i.e. ring edges) is $n$, while the number of bridges is highly concentrated  on   $n/2$ (it is w.h.p. $\leqslant n/2 + \sqrt{n\log n}$). This means that a public-health intervention aimed at reducing transmission has to suppress twice as much local transmissions in order to obtain the same effect of reducing by a certain amount the number of long-range transmissions.
If we consider the case $c=2$,  in which the number of bridges is about equal to the number of local edges, we see that the impact of a change in $p_2$ weighs roughly twice as much as a corresponding change $p_1$. 

So, even in the fairly unrealistic one-dimensional small-world model, it is possible to recover analytical evidences for the effectiveness of public-health measures that block or limit long-range mobility and super-events (such as football matches, international concerts, etc.).
The generalization to non-homogenous tramsmission probabilities is  provided   in Appendix \ref{sec:further}.

\smallskip
\noindent 
\textbf{Longer node activity and incubation.}
Natural generalizations of the setting considered in this work include models in which i) the interval of time
during which a node is active (i.e., the \emph{activity period}) follows some (possibly node-dependent) distribution and/or ii) once infected, a node only becomes active after an \emph{incubation} period, whose duration again follows some distribution.
While the introduction of activity periods following general distributions may considerably complicate the analysis, our approach rather straightforwardly extends to
two interesting cases, in which the incubation period of each node is a random variable (as long as incubation periods are independent) and/or the activity period of a node consists of $k$
consecutive units of time, with $k$ a fixed constant. 
This generalized model with random, node-dependent incubation periods corresponds to a discrete,
synchronous version of the $\mathrm{SEIR}$ model,\footnote{With respect to \SIR, for each node we have a fourth, \emph{Exposed} state, corresponding to the incubation period of a node.} which 
was recently considered as a model of the COVID-19 outbreak in Wuhan \cite{lin2020conceptual}.
These extensions are formalized and discussed in Appendix \ref{sec:further}.

\subsubsection*{Roadmap}
%In Section \ref{sec:ourresults}, we formalize our   
%main results and   discuss their  
%consequences for  epidemic IC protocols. 

 Section \ref{sec:Proof Ideas}    gives  an overall description of the main ideas and technical results behind our analysis of bond-percolation in one-dimensional small-world graphs. 
 While  the most-related, important  previous   contributions   have been already mentioned in the previous sections, further   related work is summarized in Section \ref{sec:related_new} which concludes the body of the paper.

The appendix of the paper is organized as follows.  Appendix \ref{sec:prely} introduces all  preliminaries we use in the full proofs of  our results. In Appendix \ref{sec:wup}, we consider the \SWG model when the percolation probability   $p$ is over the critical value and  give the full proofs of the first claims of Theorems \ref{thm:intro-erdos-2} and \ref{thm:gen_overthershold}. The case under the probability threshold for the \SWG model is analyzed in Appendix \ref{sec:belo}, where the second claims   of Theorems \ref{thm:intro-erdos-2} and \ref{thm:gen_overthershold} are proved.
The analysis proving  the first claims of Theorems \ref{thm:main_matching} and \ref{thm:main_matching_rf} for the \SWGreg model is provided in Appendix \ref{sec:regularcase}, while Appendix \ref{sec:reg-gen} is devoted to the proof of Theorem \ref{th:reg.upper} that easily implies the second claims of Theorems \ref{thm:main_matching} and \ref{thm:main_matching_rf}. Finally, Appendix  \ref{sec:further} describes the generalizations of our analysis to the setting where a different transmission probability  can be assigned to the two types of edges (i.e. ring edges and bridges) and the case of longer node activity and incubation.

\section{Overview of Our Analysis} \label{sec:Proof Ideas}

A standard technique in bond percolation, applied for example to percolation in
infinite trees and in random graphs, is to analyze the process of running a BFS
in the percolation graph, delaying decisions about the percolation of edges
from a node $w$ to unvisited vertices until the time $w$ is taken out of the
BFS queue. In random graphs and infinite trees, the distribution of unvisited
neighbors of $w$ in the percolation graph remains simple, even conditioned on
previous history, and one can model the size of the BFS queue as a
Galton-Watson process (see Definition~\ref{def:GW}), thus reducing the
percolation analysis to standard results about branching processes. Basically,
if the number of vertices that we add at each step to the queue is less than
one on average, the visit will reach on average a constant number of vertices
and if it is more than one and the graph is infinite the visit will reach on
average an infinite number of vertices. 

\subsection{Analysis of bond percolation in the \texorpdfstring{\SWG} \, model} 
\label{sec:overv-analysis}

In this section, we describe the key ingredients of our analysis of the \SWG
model proving Theorems~\ref{thm:intro-erdos-2} and~\ref{thm:gen_overthershold},
whose detailed and rigorous proofs can be found in Appendix~\ref{sec:wup}
(for the case in which $p$ is above the critical threshold) and in
Appendix~\ref{sec:belo} (for the case in which $p$ is below the critical
threshold).

It would be very difficult to analyze a BFS exploration of the percolation
graph to study percolation in the small-world model \SWG, since the
distribution of unvisited neighbors of a vertex $w$ in the percolation graph
is highly dependent on the previous history of the BFS (in particular, it
matters whether none, one, or both of the neighbors of $w$ along the cycle are
already visited).

Instead, and this is one of the technical innovations of our work, we define a
modified BFS visit whose process is more tractable to analyze.

The main idea of our modified BFS is that in one step we do the following:
after we pull a node $w$ from the queue, we first look at the neighbors $x$ of
$w$ that are reachable through bridge edges in the percolation graphs; then,
for each ``bridge neighbor'' $x$ of $w$, we visit the ``local cluster'' of $x$,
that is, we explore the vertices reachable from $x$ along paths that only
consist of edges of the cycle that are in the percolation graph (we indicate
the local cluster of $x$ with $\LC(x)$); finally, we add to the queue all
non-visited vertices in the local clusters of the bridge neighbors of $w$.
These steps are exemplified in Fig.~\ref{fig:local_cluster}.

The point of doing things this way is that if we delay decisions about the
random choice of the bridge edges and the random choices of the percolation,
then we have a good understanding of the following two key random variables:

\begin{figure}
\centering
\includegraphics[width=.9\textwidth]{./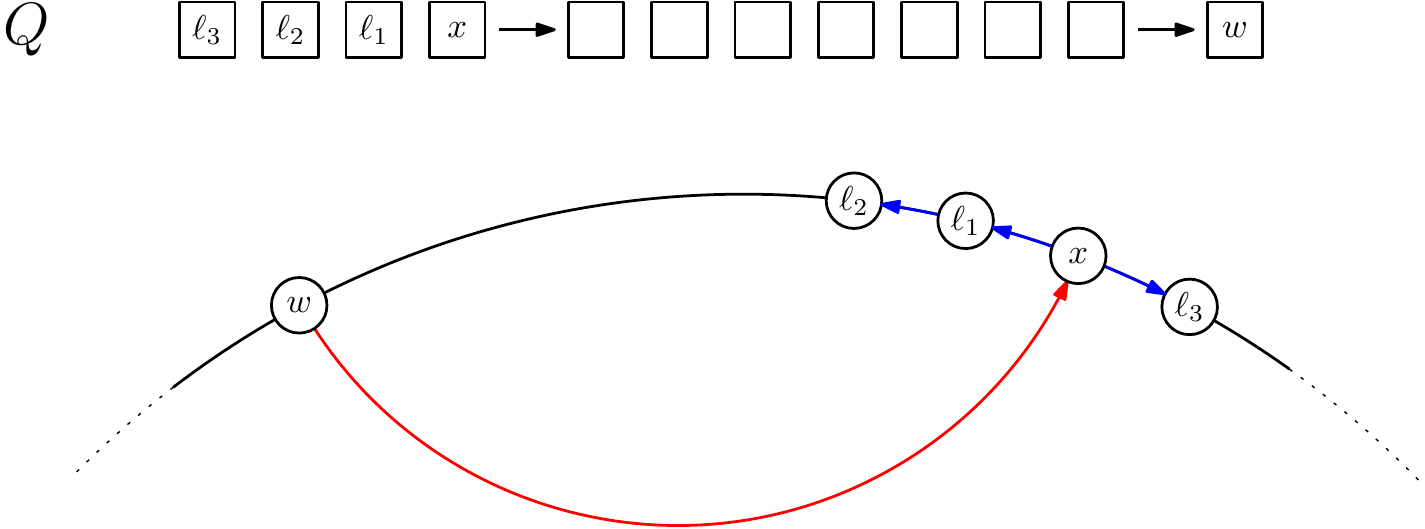}
\caption{The figure shows an example in which the visit first proceeds from a
node $w$ extracted from the queue to a new node $x$ over a bridge of the percolation graph and then reaches further nodes, starting from $x$ and proceeding along ring edges of the percolation graph. In terms of the RF protocol, this corresponds to the information (virus) first being transmitted from an infectious node $w$ to a susceptible one $x$ over a bridge edge and then propagating locally using ring edges. Note that in this case, i) we have a single bridge leaving $w$ (in general, there might be multiple ones) and ii) the information freely propagates locally from node $x$, informing susceptible nodes $\ell_1,\ell_2,\ell_3$. The edges are directed in the direction of information spread.}
\label{fig:local_cluster}
\end{figure}

\begin{itemize}

\item the number of bridge neighbors $x$ of $w$ along percolated bridge edges, which are, on average $pqn'$ if the graph comes from \SWG, $p$ is the percolation probability, and $n'$ is the number of unvisited vertices at that point in time;

\item the size of the ``local cluster'' of each such vertex $x$, that is of the vertices reachable from $x$ along percolated cycle edges, which has expectation 

\begin{equation}\label{eq:lc_intro}\Expcc{\LC(x)} = \frac{1+p}{1-p}\, .
\end{equation}

\end{itemize}

Intuitively, we would hope to argue that in our modified visit of a graph
sampled from \SWG to which we apply percolation with probability
$p$, the following happens in one step: we remove one node from the queue, and we add on average 
\begin{equation}\label{eq:nn_intro-new}
    N = pqn' \cdot \frac{1+p}{1-p}
\end{equation}
new nodes. As long as $n' = n-o(n)$ we can approximate $n'$ with $n$, and when the number $n-n'$ of visited vertices is $\Omega(n)$. This way, we would have modeled the size of the queue with a Galton-Watson
process and we would be done. The threshold behavior would occur at a $p$ such that $pqn \cdot (1+p)/(1-p) = 1$. A smaller value of $p$ would imply that we remove one node at every step and, on average, add less than one node to the queue, leading the process to die out quickly. A larger value of $p$ would imply that we remove one node at every step and, on average, add more than one node to the queue, leading the process to blow up until we reach $\Omega(n)$ vertices.

We are indeed able to prove this threshold behavior, at least for $q = c/n$ for constant $c$. However, we encounter significant difficulty in making this idea rigorous:  if we simply proceeded as described above, we would be double-counting vertices, because in general, the ``local cluster'' of a node added to the queue at a certain point may collide with the local cluster of another node added at a later point. This may be fine as long as we are trying to upper bound the number of reachable vertices, but it is definitely a problem if we are trying to establish a lower bound.

To remedy this difficulty, we truncate the exploration of each local cluster at a properly chosen constant size $L$ (we denote as $\LC ^L(x)$ the truncated local cluster of a node $x$). In our visit, we consider only unvisited neighbors $x$ of $w$ that are sufficiently far along the cycle from all previously visited vertices so that there is always ``enough space'' to grow a truncated local cluster around $x$ without hitting already visited vertices.
In more detail, we introduce the notion of ``\good node'' used in the algorithm and its analysis.
 
\begin{definition}[\good node]\label{def:free_node}
Let $G_{SW} = (V, E_1 \cup E_2)$ be a small-world graph and let $L \in
\mathbb{N}$. We say that a node $x \in V$ is  \good for a subset of nodes $X
\subseteq V$ if $x$ is at distance at least $L+1$ from any node in $X$ in the
subgraph $(V, E_1)$ induced by the edges of the ring.
\end{definition}
Thanks to the above definition, we can now formalize our modified BFS. 
%We remark that the value of $D_0$ can seems useless, but you can see its utility in the detailed proof in the appendix. TODO

\begin{algorithm}[H]
\caption{\textsc{Sequential} \SIRimmtrunc}
\small{
\textbf{Input}: A small-world graph $G_{\text{SW}} = (V, E_{\text{SW}})$; a subgraph $H$ of $G_{\text{SW}}$; a set of initiators $I_0 \subseteq V$; a set of deleted nodes $D_0 \subseteq V$.

    \begin{algorithmic}[1]
	\State $Q = I_0$
	\State $R = \emptyset$
	\State $D = D_0$
	\While{$Q \neq \emptyset $}\label{line:alg:L-visit:while}
	    \State $w = \dequeue(Q)$
	    \State $R = R \cup \{w\}$ 
 	            \For {each bridge neighbor $x$ of $w$ in $H$}\label{line:alg:L-visit:for}
 	              \If {$x$ is \good for $D \cup R \cup Q$ in $G_{SW}$}\label{line:alg:L-visit:if}
 	                  \For{each node $y$ in the $L$-truncated local cluster $\LC ^L (x)$} 
 	                       \State $\enqueue(y,Q)$
 	                  \EndFor
 	                
 	              \EndIf
 	            \EndFor
 	 \EndWhile\label{line:alg:L-visit:endwhile}
	\end{algorithmic}}
\label{alg:L-visit}
\end{algorithm}

To sum up, the $L$-truncation negligibly affects the average size of local clusters, the restriction to a subset of unvisited vertices negligibly affects the distribution of unvisited neighbors, and the analysis carries through with the same parameters and without the ``collision of local clusters'' problem.
% As an overview  of our general approach, we consider the percolation graph
% $G_p$ of a graph sampled from the $\SWGm(n,q)$ model with parameters $p$ and
% $q =
%c/n$, where $c$ is any positive constant such that $p \cdot q = pc/n <
%1$\footnote{Notice that if $p \cdot q \geq 1$, the analysis is trivial and all
%nodes will be infected, w.h.p.}.

In more detail, thanks to the arguments we described above, from~\eqref{eq:lc_intro} and~\eqref{eq:nn_intro-new}, we can prove that, 
%Claim 1 of Theorem \ref{thm:gen_overthershold}
if $p$ is above the critical threshold
\begin{equation*}
    \frac{\sqrt{c^{2} + 6c +1}-c-1}{2c} \, ,
\end{equation*}
then, with probability $\Omega(1)$, the connected components of $G_p$
containing the initiator subset have overall size $\Omega(n)$. In terms of our
BFS visit in Algorithm~\ref{alg:L-visit}, we in fact derive the following
result\footnote{We state the result for the case $|I_0|=1$.} (see
Subsection~\ref{ssec:bootstrap1} in the Appendix for its full proof).

\begin{lemma}\label{le:L-visit}
Let $V$ be a set of $n$ nodes, $s \in V$ an \textit{initiator} node and $D_0
\subseteq V \setminus \{s\}$ a set of deleted nodes such that $|D_0 | \leq
\log^4 n$. For every $\varepsilon >0$ and $c>0$, and for every probability $p$
such that
\[
\frac{\sqrt{c^{2} + 6c +1}-c-1}{2c} + \varepsilon \leq p \leq  1 \, , 
\]
there are positive parameters $L,k,t_0,\varepsilon'$, and $\gamma$, that depend
only on $c$ and $\varepsilon$, such that the following holds.  Sample a graph
$G = (V,E)$ according to the $\SWGm(n, c/n)$ distribution and let $G_p$ be the
percolation graph of $G$ with percolation probability $p$. Run the
\textsc{Sequential $L$-visit} procedure in Algorithm~\ref{alg:L-visit} on input
$(G,G_p,s,D_0)$: if $n$ is sufficiently large, for every $t$ larger than $t_0$,
at the end of the $t$-th iteration of the while loop we have 
\[
\Prc{|R \cup Q| \geq n/k \mbox{ \emph{OR} } |Q| \geq \varepsilon' t} \geq
\gamma \, , 
\]
where the probability is over both the randomness of the choice of $G$ from
$\SWGm(n, c/n)$ and over the choice of the percolation graph $G_p$.
\end{lemma}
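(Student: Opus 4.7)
The plan is to couple the queue size under the \textsc{Sequential $L$-visit} procedure with a supercritical random walk, then invoke the dichotomy that (i) such a walk has positive probability of never returning to zero, and (ii) conditioned on non-return, it grows linearly by the law of large numbers.

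First I would fix the constants. Let $p_c := (\sqrt{c^{2}+6c+1}-c-1)/(2c)$ be the critical probability, satisfying $cp_c(1+p_c)/(1-p_c) = 1$. For $p \geq p_c + \varepsilon$, continuity yields $\delta = \delta(c,\varepsilon) > 0$ with $cp(1+p)/(1-p) \geq 1 + 3\delta$. I would pick $L = L(c,\varepsilon)$ large enough that $1 + 2p(1-p^L)/(1-p) \geq (1-\delta/10)(1+p)/(1-p)$, and $k = k(c,\varepsilon,L)$ large enough that, whenever $|R \cup Q \cup D_0| \leq n/k$, at least a $(1-\delta/10)$-fraction of the unvisited nodes is \good; set $\varepsilon' = \delta/2$. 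Next, for a single iteration under the regime $|R \cup Q \cup D_0| \leq n/k$, condition on the history and the edges exposed so far. After dequeueing $w$, by deferred decisions the bridges from $w$ to unvisited nodes are iid $\mathrm{Bernoulli}(pc/n)$, with total mean at least $cp(1-1/k)$; and for each \good unvisited bridge neighbor $x$, the ring edges within distance $L$ of $x$ are unexposed, so $|\LC^L(x)|$ has its intrinsic distribution $1 + X^{(1)} + X^{(2)}$ with $X^{(j)}$ iid $\min(\mathrm{Geom}(1-p), L)$ of mean $1 + 2p(1-p^L)/(1-p)$. Combining yields $\mathbb{E}[Y_t] \geq 1 + \delta$, where $Y_t$ is the number of nodes enqueued at iteration $t$. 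Overlaps of two \good local clusters within the same iteration are ruled out by the subsequent \good check once the first is enqueued, and expected losses from near-collisions on the ring are $O(L/n)$, absorbed into the slack. Hence $Y_t$ stochastically dominates an iid random variable $\tilde{Y}$ of mean $\mu \geq 1 + \delta$ and bounded support, giving a coupling $|Q_t| \geq \tilde{S}_t := 1 + \sum_{i=1}^{t}(\tilde{Y}_i - 1)$ for every $t < \tau := \min\{s : |R_s \cup Q_s| \geq n/k\}$.

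Finally I would apply the branching-process dichotomy. Since $\tilde{Y}$ has mean $\mu > 1$ and bounded support, the associated Galton--Watson tree survives with positive probability $\rho = \rho(\mu) > 0$, equivalently $\Pr[\tilde{S}_s > 0 \text{ for all } s] \geq \rho$; and conditioned on survival, a Cramér-type large-deviation bound on the bounded-support random walk yields $\Pr[\tilde{S}_t \geq (\delta/2) t \mid \text{survival}] \geq 1 - e^{-\Omega(t)} \geq 1/2$ for all $t \geq t_0(\delta)$. Setting $\gamma = \rho/2$, on the resulting good event (probability $\geq \gamma$) either the coupling has failed by time $t$, so $\tau \leq t$ and $|R_t \cup Q_t| \geq n/k$, or the coupling holds up to $t$ and $|Q_t| \geq \tilde{S}_t \geq \varepsilon' t$; in either case the lemma's disjunction is satisfied. (Note that for $t$ so large that $\varepsilon' t > n$, linear growth of $\tilde{S}_t$ forces the coupling to break, i.e.\ forces $\tau \leq t$, so the first clause must hold.)

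The main obstacle is to rigorously implement the stochastic domination $Y_t \geq \tilde{Y}_t$ in a way that produces a genuinely iid sequence $(\tilde{Y}_t)$. This requires orchestrating deferred decisions on bridges and on the ring edges inside the $L$-neighborhoods of \good nodes across iterations, and absorbing the error terms from $L$-truncation, from the $\log^4 n$ initially deleted nodes, and from intra-iteration cluster overlap into the $\Theta(\delta)$ slack afforded by the choice of constants.
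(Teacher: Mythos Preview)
Your proposal is correct and follows essentially the same approach as the paper: both couple the queue size to a supercritical Galton--Watson process (random walk with positive drift) while $|R\cup Q\cup D_0|\le n/k$, use deferred decisions on bridges and truncated local clusters to bound the per-step expected increment above $1$, and then combine positive survival probability with a concentration bound to get linear growth with probability $\gamma$. The only minor differences are that the paper makes the i.i.d.\ coupling explicit by always restricting to a set $\hat A$ of \emph{fixed} size $\lceil n(1-4L/k)\rceil$ (this is precisely the obstacle you flag at the end), and the paper uses Chebyshev via finite variance rather than a Cram\'er-type bound via bounded support---both work, but note that $Y_t$ itself does not have bounded support (the number of bridge neighbors is $\mathrm{Bin}(n,pc/n)$), so your $\tilde Y$ must be an explicit truncation.
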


The truncation is such that our modified BFS does not discover all vertices reachable from $I_0$ in the percolation graph, but only a subset. However, this is sufficient to prove lower bounds to the number of reachable vertices when $p$ is above the threshold. Proving upper bounds, when $p$ is under the threshold (i.e., the second claims of Theorems~\ref{thm:intro-erdos-2} and~\ref{thm:gen_overthershold}) is easier because, as mentioned, we can allow double-counting of reachable vertices.

The above line of reasoning is our key idea, when $p$ is above the threshold, to get $\Omega_\varepsilon(1)$ confidence probability for: (i) the existence of a linear-size, induced connected subgraph in $G_p$ (i.e., a ``weaker'' version of Claim~1 of Theorem~\ref{thm:intro-erdos-2}), and (ii) the existence of a large epidemic outbreak, starting from an arbitrary source subset $I_0$ (i.e., Claim~1 of Theorem~\ref{thm:gen_overthershold}). A full description of this analysis is provided in Appendix~\ref{sec:wup}, where we also describe the further technical steps to achieve \emph{high-probability} for event (i), and  also event (ii) when the size of the source subset is $|I_0| = \Omega(\log n)$ (see Subsection~\ref{ssec:proof:thm:gen_overthershold}).

\medskip
\noindent
\textbf{Bounding the number of hops: parallelization of the BFS visit.}
To get bounds on the number of the BFS levels, we study the BFS-visit in Algorithm~\ref{alg:L-visit}  only up to the point where there are $\Omega(\log n)$ nodes in the queue (this first phase is not needed if $I_0$ already has size $\Omega(\log n)$), and then we study a ``\emph{parallel}'' visit in which we add at once all nodes reachable through an $L$-truncated local cluster and through the bridges from the nodes currently in the queue, skipping those that would create problems with our invariants: to this aim, we need a stronger version of the notion of \good node (see Definition~\ref{def:good_nodes_par} in Subsection~\ref{ssec:parallelizatiobfs}).

Here we can argue that, as long as the number of visited vertices is $o(n)$, the number of nodes in the queue grows by a constant factor in each iteration, and so we reach $\Omega(n)$ nodes in $\bigO(\log n)$ number of iterations that corresponds to $\bigO(\log n)$ distance from the source subset in the percolation graph $G_p$.

A technical issue that we need to address in the analysis of our parallel visit is that the random variables that count the contribution of each $L$-truncated local cluster,  added during one iteration of the visit, are not mutually independent. To prove concentration results for this exponential growth, we thus need to show that such a mutual correlation satisfies a certain \emph{local} property and then apply suitable bounds for partly-dependent random variables~\cite{Svante04} (see Theorem~\ref{thm:svante} in  Subsection \ref{app:maths} in the Appendix). All details of this part can be found in Subsection~\ref{ssec:parallelizatiobfs} in the Appendix.

\subsection{Further challenges in regular small-world graphs} 
In this section, we describe the main differences of the analysis of the \SWGreg model with respect to the analysis of the \SWG one. The following arguments give a high-level overview of the proofs of Theorems~\ref{thm:main_matching} and~\ref{thm:main_matching_rf}. The detailed proofs can be found in Appendix~\ref{sec:regularcase} (for the case $p$ above the threshold) and in Appendix~\ref{sec:reg-gen} (for the case $p$ below the threshold). 

The \SWGreg\ model, in which bridge edges form a random matching, introduces additional dependencies on the past history, compared to the analysis of the \SWG model. We deal with this difficulty by disallowing additional unvisited vertices to be reached in the visit. When we take a node $w$ out of the queue in the \SWGreg model there can be at most one unvisited neighbor $x$ of $w$ reachable through a bridge edge in the percolation graph. If such a neighbor $x$ exists, and it is not one of the disallowed unvisited vertices, we find, as before, the truncated local cluster of $x$ and add the nodes of the local cluster of $x$ to the queue, {\em except for $x$ itself}. The reason for discarding $x$ is that we have already observed the unique bridge neighbor of $x$ (namely, $w$) so, if we added $x$ to the queue, there would be no randomness left to apply the deferred decision principle when we later remove $x$ from the queue.

This means that, while in one step of our visit on \SWG\ with activation probability $p$ we take out one node from the queue and add in expectation a number of nodes described by~\eqref{eq:nn_intro-new}, in \SWGreg\ we take out one node and add in expectation
\begin{equation*}%\label{eq:3reg_intro}
    N' = p \cdot \left(\frac{1+p}{1-p} - 1\right)
\end{equation*}
nodes. This is the reason why the \SWG\ model with $q = 1/n$ and the \SWGreg\ model have notably different thresholds, even though they are superficially very similar.

The above argument allows us to prove Claims~1 and~2 of Theorem~\ref{thm:main_matching}. Currently, we are not able to analyze the parallel visit in the \SWGreg\ model, because of the correlations between the edges, although we are able to analyse the sequential visit up to $\Omega(n)$ nodes. This is why in our theorems we do not have an exponential growth of the BFS levels for the \SWGreg model.

As for Claim~3 of Theorem~\ref{thm:main_matching}, as remarked in
Section~\ref{sec:ourresults}, it is a direct consequence of the more general
bounds given by Theorem~\ref{th:reg.upper}: Given any graph $G = (V,E)$ of
maximum degree $d$ and a percolation probability $p < (1-\varepsilon)/(d-1)$,
the number of nodes connected to any given source subset $I_0$ in $G_p$ is
$\bigO_\varepsilon (|I_0| \log n)$, w.h.p. 

The proof of the above result (see Appendix~\ref{sec:reg-gen} for the details)
again relies on a suitable BFS visit of the percolation graph $G_p$ which is
similar to that in Figure~\ref{fig:local_cluster} in the previous subsection:
We start with a queue $Q$ containing only the source node and at each iteration
of a while loop (that terminates when the queue is empty) we extract a node
from the queue and we add to the queue all its neighbors in the percolation
graph. Informally speaking, every time the BFS adds a node to the queue, it is
observing a Bernoulli random variable with parameter $p <
(1-\varepsilon)/(d-1)$ (the percolation probability of each visited edge).
Since the input graph has maximum degree $d$, if the procedure runs for $t$
iterations of the while loop then $t$ nodes are extracted from the queue and in
expectation $p \left(t \cdot (d-1)+1\right)$ are added to the queue.  Chernoff's bound then implies that the probability that a queue starting from
a single source node is not yet empty after $t$ iterations of the while loop is
$\exp(-\Theta(\varepsilon^2 t))$.  Hence, the size of the connected component
containing the source node is $\bigO(\log n) $, w.h.p.  Finally, the fact that
all components are of size $\bigO(\log n)$ follows from a union bound.

\section{Related Work} \label{sec:related_new}

The \emph{fully-mixed}  SIR model \cite{VespietAl15} is the simplest SIR epidemiological model, and it treats the number of people in each of the three possible states as continuous quantities that evolve in time in accordance with certain differential equations. In this setup, the evolution of the process is governed by the expected number $R_0$ of people that each infectious person would infect, if all the contacts of that person were susceptible. If $R_0 <1$, the process quickly ends, reaching a state with zero infectious people and a small number of recovered ones. If $R_0 > 1$, the process goes through an initial phase in which the number of infectious people grows exponentially with time, until the number of recovered people becomes a $1- 1/R_0$ fraction of the population (the {\em herd immunity threshold}); the number of infectious people decreases after that, and eventually the process ends with a constant fraction of the population in the recovered state.

If we consider the Reed-Frost process on a graph $G$ that is a clique on $n$ vertices, then the percolation graph $G_p$ is an \Erdos\ random graph with edge probability sampled from $\mathcal{G}_{n,p}$. Classical results from the analysis of random graphs give us that  
if $pn < 1 - \epsilon$ then, with high probability, all the connected components of the graph have size $O_\epsilon (\log n)$, and so the set of vertices that is reachable from $I_0$ has cardinality at most $O_\epsilon (|I_0| \cdot \log n)$ and if $pn > 1 + \epsilon$ then there is a connected component of cardinality $\Omega_\epsilon (n)$, and, except with probability exponentially small in $I_0$, at least one vertex of $I_0$ belongs to the giant component and is able to reach $\Omega_\epsilon (n)$ vertices. The parameter $R_0$  of the fully mixed continuous model corresponds to the average degree of $G_p$, which is $pn$ if $G_p$ is distributed as $\mathcal{G}_{n,p}$, so we see that the fully mixed continuous model agrees with the Reed-Frost process on a clique.

A number of techniques have been developed to study percolation in graphs other than the clique, and there is a vast body of work devoted to the study of models of bond percolation and epidemic spreading, as surveyed in~\cite{VespietAl15, Wang_2017}.
Below, we review analytical studies of such processes on finite graphs.
As far as we know, our results are the first rigorous ones to establish threshold phenomena in small-world graphs for the bond-percolation process (and, thus, for the Reed-Frost   process). 

There has been some previous work on studying sufficient conditions for the RF  process to reach a sublinear number of vertices.

In~\cite{DGM06}, for a symmetric, connected graph $G = (V, E)$, Draief et al. prove a general lower bound on the critical point for the IC process
in terms of spectral properties. Further versions of such bounds for
special cases have been subsequently derived in~\cite{LKAK16,LSV14}.
Specifically, if one lets $P$ be the matrix such that $P(u,v) = p(u,v)$ is the percolation probability of the edge $\{u,v\}$, and $P(u,v) = 0$ if $\{u,v\}\not\in E$, and if one call $\lambda$ the largest eigenvalue of $P$, then $\lambda < 1-\epsilon$ implies that for a random start vertex $s$ we have that the expected number of vertices to which $s$ spreads the infection is $o_\epsilon(n)$.

In the RF process,  in which all probabilities are the same, $P = p \cdot A$, where $A$ is the adjacency matrix of $G$, and so the condition is asking for $p < (1-\epsilon)/ \lambda_{\max{}} (A)$.

This condition is typically not tight, and it is never tight in the
``small-worlds'' graphs   we consider:

\begin{itemize}

\item In the \SWGreg model, the largest eigenvalue of the adjacency matrix is $3-o(1)$, but the critical probability is $1/2$ and not $1/3$;

\item In the $\SWGm(n,1/n)$ model of a cycle plus \Erdos\ edges, the largest
eigenvalue of the adjacency matrix is typically $\Omega(\sqrt{ \log n /
\log\log n})$ because we expect to see vertices of degree $\Omega(\log n /\log\log n)$ and the largest eigenvalue of the adjacency matrix of a graph is at least the square root of its maximal degree. The spectral bound would only tell us that the infection dies out if $p = \bigO(\sqrt{\log\log n / \log n})$, which goes to zero with $n$.  A better way to use the spectral approach is to model the randomness of the small-world graph and the randomness of the percolation together; in this case, we have matrix $P(u,v)$ such that $P(u,v)=p$ for edges of the cycle and $P(u,v) = p/n$ for the other edges. This matrix has the largest eigenvalue $3p-o(1)$, so the spectral method would give a probability of $1/3$, while we can locate the threshold at $\sqrt 2-1\approx .41$.
\end{itemize}

In any family of $d$-regular graphs, the largest eigenvalue of the adjacency matrix is $d$, and so the spectral bound gives that the critical threshold is at least $1/d$; our Theorem~\ref{th:reg.upper} shows the stronger bound that the critical threshold is at least $1/(d-1)$.

We are not aware of previous rigorous results that provide sufficient
conditions for the IC process to reach $\Omega(n)$ nodes
(either on average or with high probability) in general graphs, or for the
equivalent question of proving that the percolation graph of a given graph has a connected component with $\Omega(n)$ vertices.

As discussed in the previous section, our analysis proceeds by analyzing a BFS-like visit of the percolation graph. This is also how large components in the percolation of infinite trees and random graphs have been studied before. However, this idea requires considerable elaboration to work in our setting, given the mix of fixed edges and random edges in the small-world model and the complicated dependencies on the past history that one has to control in the analysis of the visit.

A fundamental and rigorous study of bond percolation in random graphs has been proposed by Bollob\'as et al. in~\cite{Bollo07}.  They establish a coupling between the bond percolation process and a suitably defined branching process. In the   general class of inhomogenous \Erdos\ random graphs,
%, calling $Q = \{q_{u,v}\}$   the edge-probability matrix,
they derived the critical point (threshold) of the phase transition and  the size of the giant component above the transition. The  class  of inhomogeneous random graphs to which their analysis applies includes  generative models that have been studied in the complex network literature. For instance, a version of the Dubin's model~\cite{Durrett1990ATT} can be expressed in this way, and so can the \emph{mean-field scale-free model}~\cite{Bollobs2004TheDO}, which is, in turn, related to the   Barabási–Albert model~\cite{Barab509}, having the same individual edge probabilities, but with edges present independently. Finally, we observe that the popular CHKNS model introduced by  Callaway et al.~\cite{CDHKNS01} can be analyzed using an edge-independent version of this model. Indeed, they consider a random graph-formation process where, after adding each node,   a Poisson  number of edges is added to the graph, again choosing the endpoints of these edges uniformly at random.  For all such important classes of random graph models, they show tight bounds for the critical points and the relative size of the giant component beyond the phase transition.

In our setting, if we sample a graph from $\SWGm(n,q)$ and then consider the percolation graph $G_p$, the distribution of $G_p$ is that of an inhomogenous \Erdos\ graph in which the cycle edges have probability $p$ and the remaining edges have probability $pq$ (the \SWGreg model, however, cannot be expressed as an inhomogenous \Erdos\ graph).

Unfortunately, if we try to apply the results of~\cite{Bollo07} to the
inhomogeneous random graph equivalent to percolation with parameter $p$ in the $\SWGm(n,q)$ model, we do not obtain tractable conditions on the critical value $p$ for which the corresponding graph has a large connected component of small diameter, which is the kind of result that we are interested in proving.

Bond percolation and the  IC process on the class of $1$-dimensional small-world networks (that is, graphs obtained as the union of a cycle and of randomly chosen edges) have been studied in~\cite{MCN00}: using numerical approximations on the moment generating function, non-rigorous bounds on the critical threshold have been derived while analytical results are given neither for the expected size of the number of informed nodes above the transition phase of the process nor for its completion time.  Further non-rigorous results on the critical points of several classes of complex networks have been derived in~\cite{LRSV14,LKAK16} (for good surveys see~\cite{VespietAl15, Wang_2017}).

In \cite{BB01,Bis04,Bis11}, different versions of the bond percolation process has been studied in small-world structures formed by a $d$-dimensional grid augmented by random edges that follow a power-law distribution: a bridge between points $x$ and $y$ is selected with probability $\sim 1/\dist(x,y)^{\alpha}$, where $\dist(x,y)$ is the grid distance between $x$ and $y$ and $\alpha$ is a fixed  power-law parameter. Besides other aspects, each version is characterized by: (1)  whether the grid is infinite or finite, and  (2) whether the grid edges (local ties) do percolate with probability $p$  or not. 
Research in this setting has focused on the emergence of a large connected component and on its diameter as functions of the parameters $d$ and $\alpha$, while, to the best of our knowledge,  no rigorous threshold bounds are known for the bond percolation probability $p$. 

\iffalse 
Interestingly enough, in both power-law  infinite and finite structures (i.e. graphs), such studies globally show three different regimes of behaviour depending on the range the parameter $\alpha$ belongs to w.r.t. the dimension $d$. 
\fi 

In the computer science community, to the best of our knowledge, Kempe et al. ~\cite{KKT15} were the first to investigate the IC process from an optimization perspective, in the context of viral marketing and opinion diffusion. In particular, they introduced the \emph{Influence Maximization} problem, where the goal is to find a source subset of $k$ nodes of an underlying graph to inform at time $t = 0$, so as to maximize the expected number of informed nodes at the end of the IC process. They prove this is an $NP$-hard problem and show a polynomial time algorithm achieving constant approximation. Further approximation results on a version of \emph{Influence Maximization} in which the completion time of the process is considered can be found in~\cite{CWZ12,LGD12}.

\bibliography{njl}

\appendix

\section{Preliminaries} \label{sec:prely}

\subsection {Formal definitions}\label{ssec:formal_definitions}

In this subsection of the Appendix, we give the rigorous definition of the bond percolation process.

Given a symmetric graph $G = (V,E)$, we define $|V| = n$ and, for
any node $v \in V$, we denote $N(v)$ as its neighborhood in $G$ and $d(v) =
|N(v)| $ as  its degree.  The distance $d_{G}(S,v)$ from the set $S$ to the
node $v$ is the length of the shortest path among all paths from any node in
$S$ to $v$ in $G$, if no such path exists $d_{G}(S,v) = + \infty$, if $v \in
S$, $d_{G}(S,v) = 0$. Since we will only consider symmetric graph, the term
``symmetric'' will be omitted.  
Given a  graph $G = (V, E)$, for any subset $S \subseteq V$ and node $v \in V$,
and for any integer $i \leq n-1$, we  let $N^{(i)}_{G}(S)$ be the subset of
nodes that are at distance $i$ from $S$, i.e. $N^{(i)}_{G}(S) = \{v \in V \mid
d_{G}(S,v) = i\}$. Moreover, the set of nodes that are  within finite distance
from $S$, i.e. that are reachable from $S$, will be denoted as $N^*_{G}(S)$.

%Introduci qui il Bond-Percolation process

We consider the following bond percolation process on any fixed graph $G$ (this process is also known in network theory as \emph{Live-arc
graph model with Independent arc selection} - see
also~\cite{CWLC13}).

\begin{definition}[The Bond Percolation process]
\label{def:live-arc-model}
Given a graph $G = (V,E)$, and given, for every edge $e \in E$, a
\emph{percolation probability} $p(e) \in [0,1]$, the \emph{bond percolation}  process consists to
\emph{remove} each edge $e \in E$, independently, with probability $1-p(e)$.
The random subgraph, called the \emph{percolation graph} $G_p=(V,E_p)$, is
defined by the edges that are not removed (i.e. they are activated), i.e., $E_p = \{ e \in E \, : \,
\mbox{edge $e$ is not removed} \}$. Given an initial subset $A_0 \subseteq V$ of
\emph{active} nodes, for every integer $t \leq n-1$, we define the random
subset $A_{t}$ of  $t$-active nodes as the subset of nodes that are at distance
$t$ from $A_{0}$ in the percolation graph $G_p=(V,E_p)$, i.e., $A_{t} =
N_{G_{p}}^{t}(A_{0})$. Finally, the subset of all active nodes from $A_{0}$ is
the subset $N^*_{G_p}(A_0)$.
\end{definition}

\subsection{The equivalence between the \texorpdfstring{\IC} \, model and the Live-Arc model}
\label{ssec:equi}

% Introduci qui l'IC ed il RF protocols

In this paper, we consider the 
following synchronous, discrete-time epidemic protocol working over any   graph $G$
(see~\cite{EK10,VespietAl15}).

\begin{definition}[\IC and RF protocols] \label{def:independent_cascade}
Given a graph $G = (V,E)$, an assignment of \emph{transmission probabilities} $\{
p(e) \}_{e\in E}$ to the edges of $G$, and a non-empty set $I_0 \subseteq V$ of
initially infectious\footnote{we use here the term \emph{infectious} for two reasons: to emphasize that the node is both informed and active and, moreover, to  be consistent with the literature in mathematical epidemiology.} nodes (that will also be called {\em initiators} or
{\em sources} since they have the information/virus since the very beginning), the {\em Independent Cascade} (for short, \IC) protocol defines the 
stochastic process $\{S_t,I_t,R_t\}_{t\geq 0}$ on $G$, where $S_t,I_t,R_t$ are three
sets of vertices, respectively called \emph{susceptible}, \emph{infectious}, and \emph{recovered},
which form a partition of $V$ and that are defined as follows. 

\begin{itemize}
     \item At time $t=0$ we have $R_0 = \emptyset$ and $S_0 = V-I_0$.
     \item At time $t\geq 1$:
     \begin{itemize}
         \item $R_t = R_{t-1} \cup I_{t-1}$, that is, the nodes that were
         infected at the previous step become recovered.
         
         \item Independently for each edge $e = \{ u,v\}$ such that $u \in
         I_{t-1}$ and $v\in S_{t-1}$, with probability $p(e)$ the event that
         ``$u$ transmits the infection (i.e. a copy of the source message) to $v$ at time $t$'' takes place. The
         set $I_t$ is the set of all vertices $v\in S_{t-1}$ such that for at
         least one neighbor $u \in I_{t-1}$ the event that $u$ transmits the
         infection to $v$ takes place as described above.
         
         \item $S_t = S_{t-1} - I_t$
     \end{itemize}
 \end{itemize}
The process stabilizes when $I_t = \emptyset$.\\ The Reed-Frost \SIR protocol (for
short, \emph{RF Protocol}) is the special case of the \IC protocol in which all transmission probabilities
are the same. 
\end{definition}

By the above definition, since each node can be in the infective state only for
one step, we observe that the stopping time $\tau = \min \{t>0:I_t=\emptyset
\}$ is upper bounded with probability $1$ by the diameter of $G$.

In~\cite{KKT15}, given any fixed graph $G = (V,E)$, the \IC protocol is shown to
be \emph{equivalent} to the bond percolation   process.

If we consider the set $I_t$ of nodes that are infectious at time $t$ in a graph
$G=(V,E)$ according to the IC protocol with transmission
probabilities $\{ p(e) \}_{e\in E} $ and with initiator set $I_0$, we see that
such a set has precisely the same distribution as the set of nodes at distance
$t$ from $I_0$ in the percolation graph $G_p$ generated by the bond percolation process with  probabilities $\{ p(e)
\}_{e\in E}$ (see Definition \ref{def:live-arc-model}). Furthermore, the set of recovered nodes $R_t$ is distributed
precisely like the set of nodes at distance $< t$ from $I_0$ in $G_p$.

We formalize this equivalence by quoting a theorem from~\cite{CWLC13}.

\begin{theorem}[Bond percolation and IC processes are equivalent, \cite{KKT15}]
\label{thm:equivalence}
Consider the bond percolation process and the   \IC protocol   on the same graph $G=(V,E)$ and
let $I_0 = A_0 = V_0$, where $V_0$ is any fixed subset of $V$, and with
transmission probabilities and percolation probabilities equal to $\{ p(e) \}_{e\in
E}$. 
 
Then,  for every integer $t \geq 1$ and for every subsets $V_1, \dots, V_{t-1}
\subseteq V$, the events $\{I_0 = V_0, \dots, I_{t-1} = V_{t-1}\}$ and $\{A_0 =
V_0, \dots, A_{t-1} = V_{t-1}\}$ have either both zero probability   or
non-zero probability, and, in the latter case, the  distribution of the
infectious set $I_{t}$,  conditional to the event $\{I_0 = V_0, \dots, I_{t-1}
= V_{t-1}\}$, is   the same  to that of  the $t$-active set $A_t$, conditional
to the event  $\{A_0 = V_0, \dots, A_{t-1} = V_{t-1}\}$. 
\end{theorem}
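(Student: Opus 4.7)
My plan is to establish Theorem~\ref{thm:equivalence} by an explicit coupling of the two processes on a common probability space, via the \emph{deferred decisions} principle. The key observation to exploit is that in the IC protocol, for each edge $e = \{u,v\}$, at most one ``transmission coin'' is ever flipped throughout the whole run. Indeed, if $u$ first becomes infectious at some step $s$, then at step $s+1$ a single coin of bias $p(e)$ is flipped iff $v \in S_s$; since $u \in R_{s+1}$ thereafter, no further coin on $e$ is flipped in the direction $u \to v$, and the symmetric statement handles the direction $v \to u$. If $u$ and $v$ enter the state $I$ simultaneously, or if one endpoint has already recovered when the other is first infected, then no coin on $e$ is ever consulted. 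So, without loss of generality, I can flip every edge coin up front and consult it lazily.

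Concretely, I would construct a joint probability space in which, for each edge $e \in E$, a Bernoulli random variable $X_e$ with $\Pr[X_e = 1] = p(e)$ is drawn, all mutually independent, and use the $X_e$'s in two ways simultaneously: (i) define the bond percolation graph by $E_p = \{e \in E : X_e = 1\}$, giving the correct marginal distribution of $G_p$; (ii) run the IC protocol, using $X_e$ as the value of the coin on edge $e$ whenever such a coin is called for, which again gives the correct marginal distribution of the IC trajectory. With both marginals manifestly correct, all that remains is to prove that, under this coupling, $I_s = A_s$ for every $s \geq 0$ \emph{deterministically} as subsets of $V$, not merely in distribution.

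I would prove this by induction on $s$. The base case $I_0 = A_0 = V_0$ is by hypothesis. For the inductive step, suppose $I_r = A_r$ for every $r \leq s-1$. Then $u \in I_{s-1}$ iff $u$ is at distance exactly $s-1$ from $I_0$ in $G_p$, and $v \in S_{s-1}$ iff $v$ is at distance at least $s$ from $I_0$ in $G_p$. By the IC definition, a node $v$ belongs to $I_s$ iff $v \in S_{s-1}$ and there exists $u \in I_{s-1}$ with $\{u,v\} \in E$ for which the coin consulted at step $s$ on this edge equals $1$. But $u \in I_{s-1}$ means $u$ has first become infectious at step $s-1$, so by the ``at most one flip per edge'' property, the coin $X_{uv}$ has not yet been consulted; thus the value consulted at step $s$ is exactly $X_{uv}$. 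Hence $v \in I_s$ iff $v \in S_{s-1}$ and $v$ has a neighbor $u \in I_{s-1}$ with $\{u,v\} \in E_p$, which is precisely the defining condition for $v \in A_s = N^{(s)}_{G_p}(A_0)$.

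The theorem then follows at once: since $I_r = A_r$ deterministically under the coupling for every $r \geq 0$, the events $\{I_0 = V_0, \ldots, I_{t-1} = V_{t-1}\}$ and $\{A_0 = V_0, \ldots, A_{t-1} = V_{t-1}\}$ are literally the same event, so they have identical probability (jointly zero or jointly nonzero), and the conditional laws of $I_t$ and $A_t$ given them coincide. The one genuinely delicate point I expect is the careful bookkeeping needed to justify ``at most one flip per edge'' in the IC protocol, which requires a small case analysis on the order in which the endpoints of $e$ traverse $S \to I \to R$; once that is pinned down, the remainder of the argument is essentially a transcription of the two definitions.
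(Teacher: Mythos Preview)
Your proposal is correct and follows exactly the approach the paper indicates: the paper does not give a full proof of this theorem (it is quoted from \cite{KKT15,CWLC13}) but simply remarks that ``the strong equivalence shown in the previous theorem is obtained by applying the principle of deferred decision on the percolation/infection events that take place on every edge since they are mutually independent.'' Your coupling via pre-sampled Bernoulli variables $X_e$ and the inductive identification $I_s = A_s$ is precisely a fleshed-out version of that deferred-decisions argument, including the key bookkeeping that each edge's coin is consulted at most once.
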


The strong equivalence shown in the previous theorem is obtained by applying
the principle of deferred decision on the percolation/infection events that take
place on every edge since they are mutually independent. This result can be
exploited to analyze different aspects and issues of the  
IC (and, thus, the RF) protocol. We here summarize such aspects in an informal way, while, in the
next sections, we show  rigorous  claims along our analysis.

As a first immediate consequence of Theorem~\ref{thm:equivalence}, we have
that, starting from any source subset $I_0$, to bound the size of the final set
$R_{\tau}$ of the nodes informed by $I_0$, we can look at the size of the union
of the connected components in $G_p$ that include all nodes of $I_0$, i.e., we
can bound the size of $N^*_{G_p}(I_0)$.

A further remark is that in the bond percolation process there is no {\em time}, and we can
analyze the connected component of the percolation graph in any order and
according to any visit process. Furthermore, if we want a lower bound to the
number of nodes reachable from $I_0$ in the percolation graph, we can choose to
focus only on vertices reachable through a subset of all possible paths, and,
in particular, we can restrict ourselves to paths that are easier to analyze.
In our analysis we will only consider paths that alternate between using a
bounded number of local edges and one bridge edge.

\subsection{Local clusters on the ring} \label{ssec:localclusters}

Given  a one-dimensional small world graph $G = (V,E= E_1 \cup E_2)$ where
$(V,E_1)$ is a cycle, a probability $p$, and  a vertex $v\in V$, we call the
\emph{local cluster} $\LC(v)$ the set of nodes that are reachable from $v$
using only local edges (that is, edges of $E_1$) that are  in the percolation
graph $G_p$ of $G$.

\begin{fact}\label{fa:exp_loc_ub}
\label{lem:local_cluster_complete}
If $G=(V,E)$ is a one-dimensional small-world graph and $p$ is a percolation
probability, for every $w\in V$, $ \Expcc{|\LC(w)|} \leq \frac{1+p}{1-p}$, and
this  bound becomes tight as the ring size tends to $\infty$.
\end{fact}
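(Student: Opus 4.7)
The plan is to analyze the cluster by separately walking clockwise and counterclockwise from $w$ along percolated cycle edges. Label the cycle $w = v_0, v_1, \dots, v_{n-1}$ and let $e_i$ be the cycle edge $\{v_i, v_{i+1 \bmod n}\}$. Define the (random) right-extent $R$ to be the largest $k \in \{0, 1, \dots, n-1\}$ such that $e_0, e_1, \dots, e_{k-1}$ all lie in $E_p$, and analogously the left-extent $L$. Since the cycle edges percolate independently with probability $p$,
\[
\Prc{R \geq k} = p^k \quad \text{for } 0 \leq k \leq n-1,
\]
so $\Expcc{R} = \sum_{k=1}^{n-1} p^k \leq \frac{p}{1-p}$, and the same bound holds for $\Expcc{L}$.

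Next, I would observe the covering inequality $|\LC(w)| \leq 1 + R + L$. This holds because every node $u \in \LC(w)\setminus\{w\}$ is, by definition, connected to $w$ via a path using only percolated cycle edges, and such a path must travel consistently in one direction around the ring until the first missing cycle edge — so $u$ lies among the first $R$ clockwise neighbors of $w$ or among the first $L$ counterclockwise neighbors of $w$. The bound may be strict when the two arcs meet (the wraparound case), but as an inequality it always holds. By linearity of expectation,
\[
\Expcc{|\LC(w)|} \;\leq\; 1 + \Expcc{R} + \Expcc{L} \;\leq\; 1 + \frac{2p}{1-p} \;=\; \frac{1+p}{1-p},
\]
which is the claimed upper bound.

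For the tightness claim as $n\to\infty$, I would compare $R$ and $L$ to independent geometric random variables $R^\infty, L^\infty$ on $\{0,1,2,\dots\}$ with $\Prc{R^\infty \geq k} = p^k$. The events $\{R = n-1\}$ and $\{L = n-1\}$ (which are the only sources of wraparound and of the gap between $|\LC(w)|$ and $1+R+L$) both have probability $p^{n-1}$, which tends to $0$ for any fixed $p<1$. Outside this vanishing-probability event, $|\LC(w)| = 1 + R + L$ and $R, L$ are distributed as truncations of $R^\infty, L^\infty$ whose expectations converge to $p/(1-p)$ by monotone convergence. Therefore $\Expcc{|\LC(w)|} \to \frac{1+p}{1-p}$ as $n\to\infty$, proving tightness.

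There is no real obstacle here; the only subtlety worth flagging is the wraparound on a finite ring, which is why the statement is an inequality rather than an equality, and why tightness is only asymptotic. Once one realizes that $|\LC(w)| \leq 1+R+L$ rather than trying to compute $|\LC(w)|$ exactly, the computation reduces to two independent geometric expectations.
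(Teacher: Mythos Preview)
Your proof is correct. The paper actually states this fact without proof, but your decomposition into right- and left-extents is exactly the approach the paper uses immediately afterward to prove the $L$-truncated version (Fact~\ref{lemma:local_cluster}), so your argument aligns with the paper's method.
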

 
For technical reasons that will become clear later, when we explore the
percolation graph $G_p$ to estimate the size of its connected components, we do
not want to follow too many consecutive local edges. To analyze the effect of
this choice, it will be useful to have a notion of \emph{$L$-truncated local
clusters},  that we formalize below.

\begin{definition}[$L$-truncated local cluster] \label{def:trunc-local} 
Let $G =(V,E = E_1 \cup E_2)$ be a one-dimensional small-world graph, where
$(V,E_1)$ is a cycle, and the edges of $E_1$ are called ``local edges''. Let
$L$ a positive integer distance parameter, and $p$ be a percolation
probability.

The $L$-\emph{truncated local cluster} of $v \in V$ is the set of vertices
reachable from $v$ in the percolation graph $G_p$ using at most $L$  activated
local edges.
\end{definition}

The next fact provides the expected size of an $L$-truncated local cluster.

\begin{fact} 
If $G=(V,E)$ is a one-dimensional small-world graph and $p$ is a percolation
probability, for each node $v \in V$, the size $ \AN^L(v)$ of its $L$-truncated
local cluster $\LC ^L (v)$ satisfies the following
\begin{equation} \label{eq:truncated_LC}
    \Expcc{| \AN^L(v)|} =\frac{1+p}{1-p}-\frac{2p^{L+1}}{1-p}\,.
\end{equation}
\label{lemma:local_cluster}
\end{fact}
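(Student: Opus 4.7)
The plan is to decompose the $L$-truncated local cluster around $v$ into the two arcs it spans along the ring and compute their expected lengths separately. Let $R^+$ (respectively $R^-$) denote the number of \emph{new} vertices reached by walking clockwise (respectively counterclockwise) from $v$ using only activated cycle edges, capped at $L$ steps. Provided $n \geq 2L + 1$, which we may assume since $L$ is a fixed constant and $n$ can be taken large, these two arcs are disjoint except for $v$ itself, so deterministically
\begin{equation*}
  |\LC^L(v)| \;=\; 1 + R^+ + R^-.
\end{equation*}

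By the symmetry of the cycle and of the percolation process, $\Expcc{R^+} = \Expcc{R^-}$, so it suffices to compute one of them. I would exploit the fact that, since the ring edges are independently activated with probability $p$, the random variable $R^+$ is precisely the initial run-length of successes in a sequence of i.i.d.\ $\mathrm{Bernoulli}(p)$ trials, truncated at $L$. Consequently $\Prc{R^+ \geq k} = p^k$ for every $1 \leq k \leq L$, and the standard tail-sum identity for nonnegative integer random variables
\begin{equation*}
  \Expcc{R^+} \;=\; \sum_{k=1}^{L} \Prc{R^+ \geq k} \;=\; \sum_{k=1}^{L} p^k
\end{equation*}
reduces the computation to a finite geometric series equal to $p(1-p^L)/(1-p)$.

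Substituting back into the decomposition yields $\Expcc{|\LC^L(v)|} = 1 + 2p(1-p^L)/(1-p)$, which after a one-line algebraic simplification is exactly the claimed $(1+p)/(1-p) - 2p^{L+1}/(1-p)$. I do not anticipate any real obstacle: the argument is entirely elementary once the symmetry along the ring and the independence of the percolation trials on distinct edges are invoked. The only mild subtlety is the finite-cycle caveat, namely that the two arcs could in principle overlap when $2L \geq n$; this is harmless for the intended use of the fact, since $L$ is treated as a fixed constant while $n \to \infty$, and in any case the same argument gives Fact~\ref{fa:exp_loc_ub} as the corresponding bound in the untruncated limit $L \to \infty$.
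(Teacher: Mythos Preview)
Your proof is correct and follows essentially the same approach as the paper: decompose the truncated local cluster as $1 + R^+ + R^-$ into the center node plus the two independent, identically distributed one-sided arcs, then compute $\Expcc{R^+}$. The only cosmetic difference is that you evaluate $\Expcc{R^+}$ via the tail-sum $\sum_{k=1}^{L}\Prc{R^+\ge k}=\sum_{k=1}^{L}p^k$, whereas the paper sums $i\cdot\Prc{R^+=i}$ against the explicit pmf of the truncated geometric; your route is a bit cleaner but the argument is the same.
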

\begin{proof}
For any positive integer $L$ and  any  node $v \in V$, we define the random
variable $\RN^L(v)$ as the subset of nodes such that: they  are located at the
right of  $v$ at ring distance less than $L$; they will be infected by $v$
according to the \SIR process considering  only the ring edges (here, we
exclude $v$ from this set).  So, we have  

\begin{equation*}
\label{eq:pr_R_v=i}
    \Prc{|\RN^L(v)|=i}=
    \begin{cases} p^i(1-p) \hbox{ if $i < L $} \\ 
        p^L \hbox{ if $i=L$} \\ 
        0 \hbox{ otherwise.}
    \end{cases}
\end{equation*}

We observe that $|\RN^L(v)|$ is a well-known geometric random
variable\footnote{in our setting, the variable may assume value $0$.} with a
``cutoff'' and  it easily holds that 

\begin{equation*}
    \sum_{i=0}^{L}\Prc{ |\RN^L(v)|=i}=(1-p)\sum_{i=0}^{L-1}p^{i}+p^{L}=(1-p)\frac{1-p^L}{1-p}+p^{L}=1\,.
\end{equation*}
For any positive integer $L$ and  any  node $v \in V$, we also define the
``left-side'' random variable $\LN^L(v)$  indicating the  nodes in the ring
that are located at the left of  $v$  that are infected by $v$ according to the
local cluster with cut-off process. Clearly, $|\LN^L(v)|$ has the same
distribution of $|\RN^L(v)|$.  So, we can define  $\AN^L(v)$ as the overall set
of the local cluster of a node $v$ with cutoff $L$ including the node $v$
itself, i.e., 
\begin{equation*}
   | \AN^L(v)|  \, = \, |\RN^L(v)|  + |\LN^L(v)|+1\,. \label{eq:T_v>L_v+R_v}
\end{equation*}
So, since $\LN^L(v)$ and $ \RN^L(v)$ have the same distribution of probability,
\begin{align*}
    \Expcc{ \AN^L(v)} & =\Expcc{ \LN^L(v)+ \RN^L(v)+1}=2\Expcc{ \RN^L(v)}+1=2(1-p)\sum_{i=1}^{L-1}ip^i+2Lp^L+1 \\ &=2\frac{(L-1)p^{L+1}-Lp^{L}+p}{(1-p)}+2Lp^L+1 \\
    &=\frac{p+1}{1-p}+2\frac{(L-1)p^{L+1}-Lp^{L+1}}{1-p}=\frac{p+1}{1-p}-\frac{2p^{L+1}}{1-p}\,.
\end{align*}
\end{proof}

\subsection{Galton-Watson branching processes}
\label{sec:GW}

Our analyses of the bond percolation process will make use of a reduction to the analyses
of appropriately defined branching processes.

\begin{definition}[Galton-Watson Branching Process]
\label{def:GW}
Let $W$ be a non-negative integer random variable, and let $\{ W_{t,i}\}_{t\geq
1, i\geq 1}$ be an infinite sequence of independent identically distributed
copies of $W$. The \emph{Galton-Watson branching process} generated by the
random variable $W$ is the process $\{ X_t \}_{t\geq 0}$ defined by $X_0=1$ and
by the recursion
\[
	X_t = \sum_{i=1}^{X_{t-1}} W_{t,i}
\]
All properties of the process $\{ X_t \}_{t\geq 0}$ are captured by the process
$\{B_t\}_{t \geq 0}$  defined by the recursion
\[
	B_t = \left\{
		\begin{array}{ll}
			1, & t = 0;\\
			B_{t-1} + W_t - 1, & t > 0\ \text{and}\ 
			B_{t-1} > 0;\\
			0, & t > 0\ \text{and}\ B_{t-1} = 0.
		\end{array}\right.
\]
where $W_1,\ldots,W_t,\ldots$ are an infinite sequence of independent and
identically distributed copies of $W$. In the following, when we refer to the
Galton-Watson process generated by $W$ we will always refer to $\{B_t\}_{t \geq
0}$.

We define $\sigma=\min\{t>0: B_t=0\}$ (if no such $t$ exists we set
$\sigma=+\infty$) and notice that, for $T< \sigma$, we have $B_T = \sum_{t=1}^T
W_t -T$. 
\label{def:branchingprocess}
\end{definition}

Galton-Watson processes are characterized by the following important threshold
behavior.

\begin{theorem}[\cite{AS16}, Section 10.4]
\label{thm:branchingprocess}

Let $\{B_t\}_{t \geq 0}$ be a  Galton-Watson process with  integer random variable $W$. Then:
\begin{enumerate}
\item For every   constant $\varepsilon>0$, if  $\Expcc{W}<1-\varepsilon$,
the process dies out $(\sigma < +\infty)$ with probability $1$; 
\item For every   constant $\varepsilon>0$, if    $\Expcc{W}>1+\varepsilon$,
the process diverges, i.e.,    a constant $c>0$ exists  such that
$\Prc{\sigma=+\infty}\geq c$.
\end{enumerate}
\end{theorem}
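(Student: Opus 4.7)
The plan is to reduce the theorem to the classical analysis of the Galton--Watson process $\{X_t\}_{t\ge 0}$ via its probability generating function. First I would observe that the event $\{\sigma<+\infty\}$ for $\{B_t\}$ coincides with the extinction event $\bigcup_{t\ge 1}\{X_t=0\}$ for the associated generation-indexed process $\{X_t\}$: the BFS-style counter $B_t$ reaches $0$ precisely when the genealogical tree generated by $W$ is finite, which in turn happens iff some generation has no individuals. This reduction lets me work with the well-behaved process $\{X_t\}$ and its p.g.f.\ $f(s)=\mathbf{E}[s^W]$ for $s\in[0,1]$, which satisfies $f(1)=1$, $f'(1)=\mathbf{E}[W]$, and is non-decreasing and convex on $[0,1]$.

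For Claim~1 (the subcritical case $\mathbf{E}[W]<1-\varepsilon$) I would argue directly with moments rather than with the p.g.f. By the tower property and the i.i.d.\ assumption on the $W_{t,i}$, induction gives $\mathbf{E}[X_t]=(\mathbf{E}[W])^t\le (1-\varepsilon)^t$. Markov's inequality then yields
\[
\Prc{X_t\ge 1} \;\le\; \mathbf{E}[X_t] \;\le\; (1-\varepsilon)^t \;\xrightarrow[t\to\infty]{}\;0.
\]
Since $\{X_t=0\}$ is a monotone increasing sequence of events (once a generation is empty, all future ones are), $\Prc{\sigma<+\infty}=\lim_{t\to\infty}\Prc{X_t=0}=1$, as required.

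For Claim~2 (the supercritical case $\mathbf{E}[W]>1+\varepsilon$) I would use the standard fixed-point iteration. Let $q_t=\Prc{X_t=0}$ and note that, by conditioning on the size $W$ of the first generation and using independence of subtrees, $q_{t+1}=f(q_t)$ with $q_0=0$. Monotonicity of $f$ and $q_0\le q_1$ force $\{q_t\}$ to be non-decreasing and bounded above by $1$, hence convergent to a limit $q_\infty\in[0,1]$ which, by continuity of $f$, satisfies $f(q_\infty)=q_\infty$; moreover $q_\infty$ is the least fixed point of $f$ in $[0,1]$. The heart of the argument is then to show that this least fixed point is strictly smaller than $1$. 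This follows from the convexity of $f$ together with $f'(1)=\mathbf{E}[W]>1+\varepsilon$: the function $g(s)=f(s)-s$ satisfies $g(1)=0$ and $g'(1^-)=\mathbf{E}[W]-1>\varepsilon>0$, so $g(s)<0$ for $s$ slightly below $1$; since $g(0)=f(0)\ge 0$, the intermediate value theorem produces a root $q^*\in[0,1)$ of $g$, i.e.\ a fixed point of $f$ strictly less than $1$. Therefore $q_\infty\le q^*<1$, and we get $\Prc{\sigma=+\infty}=1-q_\infty\ge 1-q^*>0$, which is the desired conclusion with $c=1-q^*$.

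The main obstacle I anticipate is the rigorous passage from $\mathbf{E}[W]>1+\varepsilon$ to the existence of a fixed point strictly less than $1$: one must rule out the degenerate case $\Prc{W=0}=0$ (in which $f(0)=0$ is already a fixed point and the claim is immediate in a different way) and verify that the convexity-plus-slope argument genuinely forces $g$ to dip below zero on $[0,1)$. Up to this quick case split, everything else is routine and essentially textbook (cf.\ \cite{AS16}, Section~10.4).
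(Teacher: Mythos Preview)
The paper does not prove this theorem: it is simply quoted as a classical result from \cite{AS16}, Section~10.4, and used as a black box. Your proposal supplies exactly the standard textbook argument (moment bound plus Markov for the subcritical case, and the probability-generating-function fixed-point analysis for the supercritical case), and it is correct; the reduction from the sequential process $\{B_t\}$ to the generation process $\{X_t\}$ via ``$\sigma<\infty$ iff the genealogical tree is finite iff $X_t=0$ for some $t$'' is the right bridge, and your handling of the degenerate case $\Prc{W=0}=0$ closes the only potential gap.
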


When the expectation of $W$ is over the threshold, the above theorem implies
that, with probability $c>0$, for every time $t$ we have $B_t >0$. The next
lemma shows that, if the variance of $W$ is bounded then, with constant
positive  probability, the value of $B_t$ is not only positive, but it is at
least $\Omega(t)$.

\begin{lemma}
\label{lm:bp.lowerbound}
Let $\varepsilon$ be any positive constant, and consider a Galton-Watson
process $\{B_t\}_{t \geq 0}$ with a non-negative integer random variable $W$
with $\Expcc {W} \geq 1 + \varepsilon$ and with finite variance, i.e.,
${\mathbf Var} (W) \leq U$ for some positive constant $U$. Then there is a
constant $c'$ that depends only on $\varepsilon$ and a constant $t_0$ that
depends only on $\varepsilon$ and $U$ such that, for every $t \geq t_0$, $\Prc{
B_t \geq (\varepsilon t)/{2} } \geq c'$.
\end{lemma}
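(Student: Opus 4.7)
The plan is to decompose the event $\{B_t \geq \varepsilon t/2\}$ into a survival event and a concentration event, and bound each separately using the available tools.

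First, I would unroll the recursion: while the process is alive (i.e., while $B_s > 0$ for all $s \le t$), one has $B_t = \sum_{s=1}^{t} W_s - t + 1$. Let $S_t := \sum_{s=1}^t W_s - t + 1$. Since $W \geq 0$ is integer-valued and $B_0 = 1$, the process decreases by at most $1$ per step, so $\{\sigma > t\} = \{S_s > 0 \text{ for all } s \le t\}$, and on this event $B_t = S_t$. In particular,
\[
\Prc{B_t \geq \varepsilon t / 2} \;=\; \Prc{\sigma > t \text{ and } S_t \geq \varepsilon t / 2} \;\geq\; \Prc{\sigma = \infty \text{ and } S_t \geq \varepsilon t / 2}.
\]

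Next I would pull the two ingredients I need. From Theorem~\ref{thm:branchingprocess}, since $\Expcc{W} \geq 1 + \varepsilon$, there exists a constant $c = c(\varepsilon) > 0$ with $\Prc{\sigma = \infty} \geq c$. Independently, since $\Expcc{\sum_{s=1}^t W_s} \geq (1+\varepsilon) t$ and $\Varcc{\sum_{s=1}^t W_s} \leq U t$, Chebyshev's inequality gives
\[
\Prc{S_t < \varepsilon t / 2} \;\leq\; \Prc{\textstyle\sum_{s=1}^t W_s < (1 + \varepsilon/2) t} \;\leq\; \frac{U t}{(\varepsilon t / 2)^2} \;=\; \frac{4 U}{\varepsilon^2\, t}.
\]

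Finally, I combine the two bounds by the union-bound-on-complements trick:
\[
\Prc{\sigma = \infty \text{ and } S_t \geq \varepsilon t / 2} \;\geq\; \Prc{\sigma = \infty} - \Prc{S_t < \varepsilon t / 2} \;\geq\; c - \frac{4 U}{\varepsilon^2\, t}.
\]
Choosing $t_0 := \lceil 8 U / (c\, \varepsilon^2) \rceil$, which depends only on $\varepsilon$ and $U$, ensures that for every $t \geq t_0$ the right-hand side is at least $c/2$. Setting $c' := c/2$, which depends only on $\varepsilon$, yields the claim.

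The only mildly subtle step is the decomposition $\{\sigma > t\} = \{S_s > 0 \text{ for all } s \le t\}$, together with the observation that replacing $\{\sigma > t\}$ by the stronger event $\{\sigma = \infty\}$ loses nothing for the lower bound while letting us invoke Theorem~\ref{thm:branchingprocess} directly. Past this, the argument is routine: Chebyshev, together with the bounded variance hypothesis, gives the quantitative concentration that Theorem~\ref{thm:branchingprocess} alone does not provide, and the complementary-events bound decouples the two contributions cleanly.
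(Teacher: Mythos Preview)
Your proof is correct and follows essentially the same route as the paper: both combine the survival bound $\Prc{\sigma=\infty}\geq c(\varepsilon)$ from Theorem~\ref{thm:branchingprocess} with a Chebyshev bound on $\sum_{s\le t} W_s$, then use a complement/union bound to obtain $\Prc{B_t\geq \varepsilon t/2}\geq c-4U/(\varepsilon^2 t)\geq c/2$ for $t\geq t_0=8U/(c\varepsilon^2)$. The only cosmetic difference is that the paper bounds the complementary event $\{B_t<\varepsilon t/2\}=\{B_t=0\}\cup\{\sum W_i< t+\varepsilon t/2\}$ directly, whereas you first pass through $\{\sigma=\infty\}\cap\{S_t\geq \varepsilon t/2\}$; the resulting constants $c'=c/2$ and $t_0$ coincide.
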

\begin{proof}
By definition of  Galton-Watson process, if $W_1,\ldots,W_t$ are mutually
independent copies of $W$, \[ \Prc { B_t < \frac {\varepsilon t}{2} } = \Prc {
B_t = 0 \  \vee \  \sum_{i=1}^t W_i < t + \frac {\varepsilon t}2 } \leq \Prc {
B_t = 0 } + \Prc { \sum_{i=1}^t W_i < t + \frac {\varepsilon t}2 } \, ,  \]
where the second inequality follows by a simple   union bound. From Theorem
\ref{thm:branchingprocess}, there is a constant $c = c(\varepsilon)$ such that
\[
\Prc { B_t = 0 } \leq 1 - c \,.
\]
From Chebyshev's inequality (Theorem~\ref{thm:chebyshev}), \[ \Prc { \sum_{i=1}^t W_i < t
+ \frac{t\varepsilon }{2}  } \leq \frac {4U}{\varepsilon^2 t} \leq \frac c2 \,
, \] where the second inequality holds if $t \geq \frac {8U}{\varepsilon^2 c}$.
The lemma then follows  setting $c' = c/2$ and $t_0 =  {8U}/{\varepsilon^2 c}$.
 
\end{proof}

\begin{lemma}
Let $\varepsilon$ be any positive constant, and consider a Galton-Watson process $\{B_t\}_{t \geq 0}$ with a non-negative integer random variable $W$ such that $0 \leq W \leq M$ and $\Expcc{W}\geq 1+\varepsilon$. Then, for any $\gamma\geq 4M^2/\varepsilon^2$ and for any $t_0 \geq 1$, we have
\begin{equation*}
    \Prc{B_{n+t_0}>0 \mid B_{t_0}\geq \gamma \log n} \geq 1-\frac{1}{n}.
\end{equation*}
\label{lem:bp:growingfromlognton}
\end{lemma}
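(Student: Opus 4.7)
The plan is to reduce the survival question to a hitting-time estimate for a biased random walk, and then control it by Hoeffding's inequality plus a union bound over time. First, I would observe that, conditional on the event $\{B_{t_0} = k\}$, the Markov structure of the recursion $B_t = B_{t-1} + W_t - 1$ tells us that the subsequent process $(B_{t_0 + s})_{s \geq 0}$ is distributed as the random walk $k + S_s$ absorbed at zero, where $S_s := \sum_{i=1}^{s}(W_{t_0+i} - 1)$ has i.i.d.\ increments with mean $\mu' \geq \varepsilon$ and range contained in $[-1, M-1]$. Consequently, on this conditioning, the event $\{B_{n+t_0} = 0\}$ coincides with $\{\min_{1 \leq s \leq n} S_s \leq -k\}$.

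Next, I would estimate this hitting probability one time $s$ at a time. For each fixed $s$, Hoeffding's inequality applied to the bounded i.i.d.\ summands gives
\[
\Prc{S_s \leq -k} \;\leq\; \exp\!\Bigl(-\frac{2(k + s\varepsilon)^2}{sM^2}\Bigr) \;\leq\; \exp\!\Bigl(-\frac{8\varepsilon k}{M^2}\Bigr),
\]
where the second inequality uses the elementary bound $(k + s\varepsilon)^2 \geq 4\,k s\,\varepsilon$, which crucially makes the exponent uniform in $s$. A union bound over $s \in \{1,\dots,n\}$ then yields
\[
\Prc{B_{n+t_0} = 0 \mid B_{t_0} = k} \;\leq\; n\exp\!\Bigl(-\frac{8\varepsilon k}{M^2}\Bigr).
\]
Plugging in $k \geq \gamma \log n$ with $\gamma \geq 4M^{2}/\varepsilon^{2}$ (and noting that we may assume $\varepsilon \leq 1$ without loss of generality, since the statement only weakens for larger $\varepsilon$), the exponent is at least $2\log n$, producing the desired bound $1/n$. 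Averaging over realized values $k \geq \gamma \log n$ preserves the conclusion, so conditioning on the event $\{B_{t_0} \geq \gamma \log n\}$ suffices.

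I do not anticipate any genuine obstacles: the Markov reduction is standard for Galton-Watson queues written in the ``extract one, add $W_t$'' form, and Hoeffding is available because $W$ is bounded. The only point worth stressing is that the quadratic dependence $\gamma = \Omega(M^{2}/\varepsilon^{2})$ in the hypothesis is exactly what is required to make the AM-GM step output an $s$-free exponential that, after a union bound over $n$ steps, still beats $n$ by a factor of $n$.
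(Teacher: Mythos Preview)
Your proof is correct and takes a genuinely different route from the paper. The paper argues by tracking exponential growth step by step: it defines events $A_i=\{B_{i+t_0}\geq(1+\varepsilon/2)^i\gamma\log n\}$ conditional on $\{B_{i-1+t_0}\geq(1+\varepsilon/2)^{i-1}\gamma\log n\}$, bounds each $\Prc{A_i}\geq 1-1/n^2$ via Hoeffding applied to a sum of $\ell$ copies of $W$, and then chains the events over $i=1,\dots,n$. You instead treat $(B_{t_0+s})_s$ as an absorbed biased random walk started at $k$ and bound the probability of ever hitting zero by Hoeffding at each fixed time $s$ plus a union bound, using $(k+s\varepsilon)^2\geq 4ks\varepsilon$ to make the tail uniform in $s$. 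Your formulation is arguably better suited to the sequential process $B_t=B_{t-1}+W_t-1$ as defined in the paper: in a single step $B$ moves by one increment in $[-1,M-1]$, so the paper's per-step multiplicative jump from $\ell$ to $(1+\varepsilon/2)\ell$ cannot literally occur once $\ell\gg M$; that argument is really written for the generation-size process $X_t$. Your approach only needs survival, which is exactly what the lemma asserts, and sidesteps this issue entirely.

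One small wrinkle: the ``WLOG $\varepsilon\leq 1$'' reduction is not valid as stated, since replacing $\varepsilon$ by $\min(\varepsilon,1)$ tightens rather than relaxes the hypothesis $\gamma\geq 4M^2/\varepsilon^2$. A clean fix covering all $\varepsilon$ is to note that each increment $W_i-1\geq -1$, so $S_s\geq -s$ deterministically and the event $\{S_s\leq -k\}$ forces $s\geq k$; for such $s$ use instead $(k+s\varepsilon)^2/s\geq s\varepsilon^2\geq k\varepsilon^2$, which yields an exponent $2k\varepsilon^2/M^2\geq 8\log n$ directly from $\gamma\geq 4M^2/\varepsilon^2$, with no constraint on the size of $\varepsilon$.
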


\begin{proof} Let $\varepsilon'=\varepsilon/2$. For any $\ell >0$ and any $i \geq 1$, consider the event
\begin{equation*}
    A = \{B_{i+t_0} \geq (1+\varepsilon')\ell \mid B_{i-1+t_0} \geq \ell \}.
\end{equation*}
If we consider $\ell $ generic i.i.d. copies of the random variable $W$, $W_1,\dots,W_\ell$ we have that
\begin{equation}
    \Prc{A} \geq \Prc{\sum_{i=1}^\ell W_i \geq (1+\varepsilon')\ell} \geq 1-e^{-\frac{\varepsilon^2}{2M^2}},
    \label{eq:sum_M_copiesofW}
\end{equation}
where the last inequality follows from the Hoeffding bound. 

We notice that, if we define the events
\begin{equation*}
    A_{i} = \{B_{i+t_0} \geq (1+\varepsilon')^i \gamma \log n \mid B_{i-1+t_0} \geq (1+\varepsilon')^{i-1}\gamma \log n\},
\end{equation*}
then, for the chain rule, we will have
\begin{equation}
    \Prc{B_{n+t_0} >0 \mid B_{t_0}\geq \gamma \log n} \geq \prod_{i=1}^{n}\Prc{A_i}.
    \label{eq:prob_queuenotempty}
\end{equation}
For \eqref{eq:sum_M_copiesofW}, we have
\begin{equation}
    \Prc{A_i} \geq 1-e^{-\frac{\varepsilon^2}{2M^2}\gamma\log n} \geq 1-\frac{1}{n^2},
    \label{eq:prob_Ai}
\end{equation}
where the last inequality follows since $\gamma \geq 4M^2/\varepsilon^2$. So, for \eqref{eq:prob_Ai} and \eqref{eq:prob_queuenotempty},
\begin{equation*}
    \Prc{B_{n+t_0}>0 \mid B_{t_0} \geq \gamma \log n} \geq \left(1-\frac{1}{n^2}\right)^{n} \geq 1-\frac{1}{n}.
\end{equation*}
\end{proof}

\subsection{Further mathematical tools} \label{app:maths}
\begin{definition}[Stochastic dominance]
\label{def:stochastic_domination}
Let $X$, $Y$ be two real-valued random variables. Then, $Y$ is said to
\emph{stochastically dominates} $X$ ($X \preccurlyeq Y$) if, for every $x \in
\mathbb{R}$, $\Prc{X>x} \leq \Prc{Y > x}$.
\end{definition}

\begin{definition}[Coupling] \label{def:coupli}
Let $X_1$ and  $X_2$ be two random variables that are defined on the
probability spaces $(\Omega_1,F_1, P_1)$ and $(\Omega_2,F_2, P_2)$,
respectively. Then a \emph{coupling} between $X_1$ and  $X_2$ is formed by: i)
a  probability space $(\Omega, F, P)$, and ii)  a vector random variable $W =
(Y_1, Y_2)$ defined over this space such that: the marginal distribution of
$Y_1$ equals the distribution of  $X_1$, while the marginal distribution of
$Y_2$ equals that of $X_2$.
\end{definition}

Devising a coupling is often an effective way to show stochastic dominance, as
formally stated below.

\begin{lemma}
\label{lem:stochastic_domination_coupling}
A random variable $X_1$ is   dominated by a random variable $X_2$ if and only
if there exists a coupling $(Y_1, Y_2)$ between $X_1$ and  $X_2$ such that
$\Prc{Y_1 \leq Y_2}=1$. 
\end{lemma}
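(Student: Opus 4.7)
The plan is to prove the two implications separately, using cumulative distribution functions and the classical quantile-coupling construction.

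For the easy direction (existence of a coupling implies stochastic dominance), I would suppose that $(Y_1,Y_2)$ is a coupling of $X_1$ and $X_2$ with $\Prc{Y_1\leq Y_2}=1$. Then for any real $x$, the event $\{Y_2\leq x\}$ is almost surely contained in $\{Y_1\leq x\}$, so $\Prc{Y_2\leq x}\leq \Prc{Y_1\leq x}$. Taking complements and invoking the fact that the marginals of $(Y_1,Y_2)$ match those of $(X_1,X_2)$, one obtains $\Prc{X_1 > x}\leq \Prc{X_2 > x}$, which is precisely the stochastic dominance $X_1\preccurlyeq X_2$ in Definition \ref{def:stochastic_domination}.

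For the reverse direction, I would construct the coupling explicitly via the quantile transform. Let $F_1$ and $F_2$ be the CDFs of $X_1$ and $X_2$. Stochastic dominance $X_1 \preccurlyeq X_2$ is equivalent to $F_1(x)\geq F_2(x)$ for every $x\in\mathbb{R}$. Define the generalized inverse $F_i^{-1}(u)=\inf\{x\in\mathbb{R}\,:\,F_i(x)\geq u\}$ for $u\in(0,1)$. A standard argument shows that if $U$ is uniform on $(0,1)$, then $F_i^{-1}(U)$ has the same distribution as $X_i$. Moreover, the inequality $F_1\geq F_2$ pointwise implies the reverse inequality between the generalized inverses, namely $F_1^{-1}(u)\leq F_2^{-1}(u)$ for every $u\in(0,1)$: indeed, for any $x$ with $F_2(x)\geq u$ one also has $F_1(x)\geq u$, so the infimum defining $F_1^{-1}(u)$ is taken over a set that contains the one defining $F_2^{-1}(u)$.

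Equipped with this, I would take a single uniform random variable $U$ on the standard probability space $([0,1],\mathcal{B},\mathrm{Leb})$ and define $Y_1=F_1^{-1}(U)$ and $Y_2=F_2^{-1}(U)$. By the preceding observations, the marginals of $(Y_1,Y_2)$ coincide with the distributions of $X_1$ and $X_2$ respectively, so $(Y_1,Y_2)$ is a coupling in the sense of Definition \ref{def:coupli}, and by monotonicity of the quantile functions one has $Y_1\leq Y_2$ everywhere, hence $\Prc{Y_1\leq Y_2}=1$. There is no substantive obstacle here: the only care needed is in verifying the inequality between the generalized inverses, which is the one spot where the $\inf$ definition (rather than a naive inverse) matters, since $F_1$ and $F_2$ need not be continuous or strictly increasing.
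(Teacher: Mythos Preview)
Your proof is correct and is the standard quantile-coupling argument for this classical result. Note, however, that the paper does not actually prove this lemma: it is stated in the preliminaries (Appendix~\ref{app:maths}) as a known tool without proof, so there is no ``paper's own proof'' to compare against. Your argument is exactly the textbook one and would serve perfectly well if a proof were required.
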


\begin{lemma}[Wald's equation, \cite{wald1944}]
\label{lem:Wald-equation}
Let $\{X_n\}_{n \in \mathbb{N}}$ be an infinite  sequence of real-valued,
mutually independent, and identically distributed random variables.   Let $N$
be a non-negative integer-value random variable that is independent of the
sequence $\{X_n\}_{n \in \mathbb{N}}$. Suppose that $N$ and $X_n$ have finite
expectations. Then,
\begin{equation*}
    \Expcc{X_1+\dots+X_N} = \Expcc{N} \cdot \Expcc{X_1}\,.
\end{equation*}
\end{lemma}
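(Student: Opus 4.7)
\medskip
\noindent\textbf{Proof proposal for Lemma~\ref{lem:Wald-equation} (Wald's equation).}
My plan is to give the standard two-line derivation, exploiting the full independence assumption (the lemma assumes $N$ is independent of the whole sequence $\{X_n\}$, which is stronger than the usual stopping-time hypothesis and makes things easier). First I would rewrite the random sum as a double sum using indicators:
\begin{equation*}
X_1 + \cdots + X_N \;=\; \sum_{n=1}^{\infty} X_n \, \mathbf{1}\{N \geq n\}.
\end{equation*}
Next I would take expectations and push the expectation inside the sum. Since both $\Expcc{|X_1|}$ and $\Expcc{N}$ are finite, the double series $\sum_n \Expcc{|X_n|\mathbf{1}\{N\geq n\}} = \Expcc{|X_1|}\Expcc{N}$ converges, so Fubini--Tonelli lets me swap the expectation and the infinite sum.

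The key step is to evaluate a generic term $\Expcc{X_n \mathbf{1}\{N \geq n\}}$. Because $N$ is independent of the entire sequence $\{X_m\}_{m\geq 1}$, the indicator $\mathbf{1}\{N\geq n\}$, which is a measurable function of $N$ alone, is independent of $X_n$. Therefore
\begin{equation*}
\Expcc{X_n \mathbf{1}\{N \geq n\}} \;=\; \Expcc{X_n}\cdot \Prc{N\geq n} \;=\; \Expcc{X_1}\cdot \Prc{N\geq n},
\end{equation*}
using also that the $X_n$ are identically distributed. Summing in $n$ and invoking the elementary identity $\Expcc{N}=\sum_{n=1}^{\infty}\Prc{N\geq n}$ (valid for non-negative integer random variables) would conclude:
\begin{equation*}
\Expcc{X_1+\cdots+X_N} \;=\; \Expcc{X_1}\sum_{n=1}^{\infty}\Prc{N\geq n} \;=\; \Expcc{X_1}\cdot\Expcc{N}.
\end{equation*}

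The only real subtlety is the interchange of expectation and infinite sum when the $X_n$ are not assumed non-negative; I would address this by first running the above argument for $|X_n|$ in place of $X_n$ to justify absolute convergence, and then applying Fubini in the signed case. An alternative, fully equivalent route (which avoids any measure-theoretic interchange) is to condition on $N$: by the law of total expectation and the independence of $N$ from $\{X_n\}$,
\begin{equation*}
\Expcc{X_1+\cdots+X_N} \;=\; \sum_{k=0}^{\infty}\Prc{N=k}\,\Expcc{\,\textstyle\sum_{n=1}^{k} X_n\,} \;=\; \Expcc{X_1}\sum_{k=0}^{\infty} k\,\Prc{N=k} \;=\; \Expcc{X_1}\Expcc{N},
\end{equation*}
which gives the result without any use of Fubini. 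I would present whichever form is shorter in context; the main ``obstacle'' here is essentially cosmetic, since the independence hypothesis makes the computation immediate.
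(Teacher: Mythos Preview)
Your argument is correct; both the indicator/Fubini route and the conditioning-on-$N$ route you outline are standard and valid under the stated hypotheses. Note, however, that the paper does not actually prove Lemma~\ref{lem:Wald-equation}: it is simply quoted as a known result with a citation to \cite{wald1944}, so there is no ``paper's own proof'' to compare against.
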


\begin{theorem}[Hoeffding's Inequality]
\label{thm:hoeff}
Let $X_1, \dots, X_n$ be independent random variables with $X_i$ strictly
bounded in $[a_i, b_i]$ for every $i \in \{1, \dots, n\}$, where $- \infty <
a_i \leq b_i < + \infty $. Let $S = \sum_{i=1}^{n} X_i$. Then,
\begin{equation*}
    \Prc{|S - \Expcc{S}| \geq t} \leq 2 \exp{\left(\frac{-2t^{2}}{\sum_{i=1}^{n}(b_i - a_i)^{2}}\right)}\,.
\end{equation*}
\end{theorem}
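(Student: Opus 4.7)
The plan is to follow the classical Chernoff-bounding strategy: center the variables, apply Markov's inequality to the exponential moment, exploit independence to factor, bound each factor with Hoeffding's Lemma, and finally optimize over the free parameter. A union bound then handles the two-sided statement.

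First I would consider the centered sum $S - \Expcc{S} = \sum_i Y_i$, where $Y_i = X_i - \Expcc{X_i}$, noting that $Y_i \in [a_i - \Expcc{X_i}, b_i - \Expcc{X_i}]$, so each $Y_i$ lies in an interval of the same length $b_i - a_i$ and has mean zero. For any $\lambda > 0$, by Markov's inequality applied to the monotone function $x \mapsto e^{\lambda x}$, one obtains
\begin{equation*}
    \Prc{\, S - \Expcc{S} \geq t\,} \;\leq\; e^{-\lambda t}\,\Expcc{\,e^{\lambda \sum_i Y_i}\,} \;=\; e^{-\lambda t}\,\prod_{i=1}^n \Expcc{\,e^{\lambda Y_i}\,},
\end{equation*}
where the last equality uses independence of the $X_i$ (hence of the $Y_i$).

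The core technical step is Hoeffding's Lemma: if $Y$ is a zero-mean random variable supported in $[\alpha,\beta]$, then $\Expcc{e^{\lambda Y}} \leq \exp(\lambda^2 (\beta-\alpha)^2 / 8)$. I would prove this by writing $y$ as a convex combination $y = \frac{\beta - y}{\beta - \alpha}\alpha + \frac{y - \alpha}{\beta - \alpha}\beta$, using convexity of the exponential to bound $e^{\lambda y}$ linearly in $y$, then taking expectation (the $y$-term vanishes since $\Expcc{Y}=0$). The remaining scalar quantity $\log\bigl(\tfrac{\beta}{\beta-\alpha}e^{\lambda\alpha} - \tfrac{\alpha}{\beta-\alpha}e^{\lambda\beta}\bigr)$ can be written as $\varphi(\lambda(\beta-\alpha))$ for a suitable $\varphi$; a short second-derivative computation shows $\varphi''\leq 1/4$, and since $\varphi(0)=\varphi'(0)=0$, Taylor's theorem yields $\varphi(u)\leq u^2/8$. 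I expect this lemma to be the main obstacle, since it requires the careful second-derivative bound (essentially showing the cumulant generating function of a bounded zero-mean variable is sub-Gaussian).

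Applying the lemma to each $Y_i$ on its interval of length $b_i - a_i$, I would conclude
\begin{equation*}
    \Prc{\, S - \Expcc{S} \geq t\,} \;\leq\; \exp\!\left(-\lambda t + \frac{\lambda^2}{8}\sum_{i=1}^n (b_i - a_i)^2\right).
\end{equation*}
Optimizing in $\lambda > 0$ by setting $\lambda = 4t / \sum_i (b_i - a_i)^2$ gives the one-sided bound $\exp\!\bigl(-2t^2 / \sum_i (b_i-a_i)^2\bigr)$. Finally, applying the identical argument to $-(S-\Expcc{S})$ (which consists of centered independent variables bounded in intervals of the same lengths) and combining the two estimates by a union bound yields the factor of $2$ and produces the stated two-sided inequality.
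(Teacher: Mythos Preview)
Your argument is the standard, correct proof of Hoeffding's inequality via the Chernoff method and Hoeffding's Lemma. Note, however, that the paper does not supply its own proof of this statement: it is quoted in the ``Further mathematical tools'' subsection as a classical result to be used later, with no accompanying argument. There is therefore nothing in the paper to compare against; your write-up simply fills in the well-known textbook derivation, and it does so without gaps.
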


\begin{definition}[Dependency graph]
\label{def:dependency_graph}
Let $\{Y_{\alpha}\}_{\alpha \in \mathcal{A}}$ be a sequence of random
variables. A \emph{dependency graph} for $\{Y_\alpha\}_{\alpha \in
\mathcal{A}}$ is a graph $\Gamma$ with vertex set $\mathcal{A}$ such that if
$\mathcal{B}\subseteq \mathcal{A}$ and $\alpha \in \mathcal{A}$ is not
connected by an edge to any vertex in $\mathcal{B}$, then $Y_\alpha$ is
independent of $\{Y_\beta\}_{\beta \in \mathcal{B}}$.
\end{definition}

The sum of a set of random variables, with mutual correlations that can be
described by a dependency graph, enjoys of the following concentration result.

\begin{theorem}[\cite{Svante04}]
\label{thm:svante}
Suppose that $X$ is a random variable such that $X = \sum_{\alpha \in
\mathcal{A}}Y_\alpha$, where, for every $\alpha \in \mathcal{A}$, $Y_\alpha
\sim \text{Be}(p)$, for some fixed $p \in (0,1)$. Let $N = |\mathcal{A}|$.
Then, for every $t \geq 0$, 
\begin{equation*}
    \Prc{X \leq \Expcc{X}-t}\leq \exp{-\frac{8t^2}{25\Delta_1(\Gamma) Np}},
\end{equation*}
where $\Gamma$ is the dependency graph of $\{Y_\alpha\}_{\alpha \in
\mathcal{A}}$, $\Delta(\Gamma)$ is the maximum degree of $\Gamma$, and
$\Delta_1(\Gamma)=\Delta(\Gamma)+1$.
\end{theorem}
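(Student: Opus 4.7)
The plan is to exploit the dependency graph structure via a coloring argument: partition $\mathcal{A}$ into a few independent sets, within each of which the Bernoullis are mutually independent, and then combine standard Chernoff-type estimates across classes through Hölder's inequality.

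First, by greedy coloring, $\Gamma$ admits a proper vertex coloring with $\chi \leq \Delta(\Gamma) + 1 = \Delta_1$ colors. Writing $\mathcal{A} = \bigsqcup_{j=1}^{\chi} \mathcal{A}_j$ accordingly, Definition~\ref{def:dependency_graph} guarantees that $\{Y_\alpha\}_{\alpha \in \mathcal{A}_j}$ is a family of mutually independent $\mathrm{Be}(p)$ variables for every $j$. Set $X_j = \sum_{\alpha \in \mathcal{A}_j} Y_\alpha$, so that $X = \sum_{j=1}^{\chi} X_j$ and $\sum_{j=1}^{\chi} |\mathcal{A}_j| = N$.

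Second, I would control the moment generating function of each $X_j$. From $\mathbf{E}[e^{-\mu Y_\alpha}] = 1 + p(e^{-\mu} - 1) \leq \exp(p(e^{-\mu} - 1))$ and the elementary inequality $e^{-\mu} - 1 + \mu \leq \mu^2/2$ valid for every $\mu \geq 0$, one obtains $\mathbf{E}[e^{-\mu(Y_\alpha - p)}] \leq \exp(p\mu^2/2)$, and hence, by independence inside the $j$-th color class, $\mathbf{E}[e^{-\mu(X_j - \mathbf{E}[X_j])}] \leq \exp(|\mathcal{A}_j|\, p\, \mu^2 / 2)$.

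Third, I would combine the color classes via generalized Hölder's inequality with equal exponents $\chi$:
\begin{equation*}
\mathbf{E}\!\left[e^{-\lambda(X - \mathbf{E}[X])}\right] \;\leq\; \prod_{j=1}^{\chi} \mathbf{E}\!\left[e^{-\lambda \chi (X_j - \mathbf{E}[X_j])}\right]^{1/\chi} \;\leq\; \exp\!\left(\frac{\chi N p\, \lambda^2}{2}\right).
\end{equation*}
A Markov step with the optimal choice $\lambda = t/(\chi N p)$ then yields
\begin{equation*}
\Prc{X \leq \mathbf{E}[X] - t} \;\leq\; \exp\!\left(-\frac{t^2}{2 \chi N p}\right) \;\leq\; \exp\!\left(-\frac{t^2}{2 \Delta_1 N p}\right),
\end{equation*}
an inequality of the advertised shape.

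The main obstacle is matching the precise constant $8/25$ of Janson's formulation: the coloring-plus-Hölder scheme above already produces the correct rate $t^2/(\Delta_1 N p)$, in fact with the (stronger) constant $1/2$ in front, while Janson's statement is written to accommodate the fractional chromatic number $\chi^{\ast}(\Gamma) \leq \Delta_1$ in place of $\chi$ and uses a slightly coarser but uniform truncation of the moment generating function, which accounts for the specific numerical factor $8/25$. For the applications in this paper only the existence of a positive universal constant in the exponent is needed, so any of these variants is enough to drive the concentration arguments used later.
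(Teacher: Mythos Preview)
The paper does not prove this statement at all: Theorem~\ref{thm:svante} is simply quoted from \cite{Svante04} (Janson's paper on large deviations for sums of partly dependent random variables) and used as a black box in the proof of Lemma~\ref{lemma:Y_t__not_too_small}. There is therefore no ``paper's own proof'' to compare against.

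That said, your argument is correct and is in fact the standard route Janson himself follows: decompose $\mathcal{A}$ into independent sets via a proper coloring of $\Gamma$, control the MGF on each color class by independence, and glue the classes together with H\"older. Your bound $\exp(-t^2/(2\Delta_1 N p))$ is even sharper than the stated $\exp(-8t^2/(25\Delta_1 N p))$; the discrepancy arises because Janson states his result with the fractional chromatic number $\chi^\ast(\Gamma)$ in place of $\Delta_1$ and uses a slightly cruder but more uniform truncation of the MGF, yielding the universal constant $8/25$. As you note, only the existence of some positive constant in the exponent matters for the application in Lemma~\ref{lemma:Y_t__not_too_small}, so either version suffices.
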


\begin{theorem}[Chebyshev's inequality]
\label{thm:chebyshev}
Let $X$ be a real-valued random variable with bounded expectation and variance.
Then, for every real $a>0$,
\begin{equation*}
    \Prc{|X-\Expcc{X}|\geq a}\leq \frac{\Varcc{X}}{a^2}\,.
\end{equation*}
\end{theorem}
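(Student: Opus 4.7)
The plan is to derive Chebyshev's inequality as a direct corollary of Markov's inequality applied to the non-negative random variable $Y = (X - \Expcc{X})^2$. This is the textbook route, and the only thing to get right is the two-line reduction.

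First, I would recall (or quickly establish) Markov's inequality: for any non-negative real-valued random variable $Y$ with finite expectation, and any $b > 0$, one has $\Prc{Y \geq b} \leq \Expcc{Y}/b$. The proof is the standard one-liner: $\Expcc{Y} \geq \Expcc{Y \cdot \mathbf{1}_{\{Y \geq b\}}} \geq b \cdot \Prc{Y \geq b}$, where the first inequality uses $Y \geq 0$ and the second uses that on the event $\{Y \geq b\}$ the integrand is at least $b$.

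Next, I would apply this to $Y = (X - \Expcc{X})^2$, which is non-negative and has finite expectation equal to $\Varcc{X}$ by the hypothesis that $X$ has bounded variance. The key observation is the equivalence of events
\[
\{|X - \Expcc{X}| \geq a\} = \{(X - \Expcc{X})^2 \geq a^2\},
\]
valid for any $a > 0$ since squaring is monotone on non-negative reals. Applying Markov's inequality with $b = a^2$ then yields
\[
\Prc{|X - \Expcc{X}| \geq a} = \Prc{(X - \Expcc{X})^2 \geq a^2} \leq \frac{\Expcc{(X - \Expcc{X})^2}}{a^2} = \frac{\Varcc{X}}{a^2},
\]
which is the claimed bound.

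There is essentially no obstacle here; the statement is textbook and the proof is a three-line computation. The only care needed is to confirm that $\Varcc{X}$ is well-defined and finite under the stated hypotheses (which is assumed) and to state Markov's inequality as an auxiliary lemma, since the paper's preliminaries list Chebyshev but not Markov. If the authors prefer a self-contained write-up, I would include Markov's one-line proof inline rather than cite it.
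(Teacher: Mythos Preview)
Your proof is correct and is the standard textbook derivation of Chebyshev's inequality from Markov's inequality. The paper itself does not prove this statement: it is listed among the preliminary mathematical tools without proof, as a well-known result, so there is nothing to compare against beyond noting that your argument is the canonical one.
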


\section{The \texorpdfstring{$\SWGm(n,q)$} \, Model above the Threshold} \label{sec:wup}

%\andy{Qui bisogna dire che: in questa sezione proveremo il claim   I dei thm ... e .... Per fare questo, procederemo in questo modo. In Subsection, ... (dare quindi una roadmap chiara di tutta la Section...}

%\andy{Ricordiamoci che da questa section in poi, utilizziamo definizioni e risultati tecnici che sono solo nell'appendice A e non sono necessariamente puntati dal body del paper...  quindi vanno puntati  ogni volta che li usiamo...}

In this section, we will prove Claim 1 of Theorem \ref{thm:gen_overthershold} and Claim 1 of Theorem \ref{thm:intro-erdos-2}, respectively in Subsection \ref{ssec:proof:thm:gen_overthershold} and \ref{ssec:proof:thm:intro-erdos-2}. 
Before proceeding with the proofs of the theorems, we introduce two preliminary lemmas. In particular, in the Subsection \ref{ssec:bootstrap1} we present the proof of the Lemma \ref{le:L-visit} (already introduced in Section \ref{sec:overv-analysis}) while in Subsection \ref{sec:overv-analysis} we state and prove a further preliminary lemma.

In all this section, we will indicate with $V$ a set of $n$ nodes, with $G=(V,E)$ a graph sampled according the $\mathcal{SWG}(n,c/n)$ distribution, and with $G_p$ the percolation graph of $G$ with percolation probability $p$.
\subsection{Proof of Lemma \ref{le:L-visit}}
\label{ssec:bootstrap1}

This section  provides the full proof of  Lemma \ref{le:L-visit} we state   in Section \ref{sec:overv-analysis} to sketch our general analysis.
%The main result of this section can be formulated  as follows.

%\begin{lemma}\label{le:L-visit}
%Let $V$ be a set of $n$ nodes and let $s \in V$ be an \textit{initiator} node.
%For every $\varepsilon >0$ and $c>0$, and for every contagion probability   $p$
%such that
%\[
%\frac{\sqrt{c^{2} + 6c +1}-c-1}{2c} + \varepsilon \leq p \leq  1 \, , 
%\]
%there are positive parameters $L,k,\varepsilon'$, and $\gamma$, that depend only on $c$ and $\varepsilon$, such that the following holds.  Sample a graph $G = (V,E)$ according to the $\SWGm(n, c/n)$ distribution and let $G_p$ be the percolation graph of $G$ with parameter $p$.  Run the $L$-visit procedure in Algorithm~\ref{alg:L-visit} on input $(G,G_p,s)$: if $n$ is sufficiently large, for every sufficiently large $t$, at the end of the $t$-th iteration of the while loop it holds that \[ \Prc{|R \cup Q| \geq n/k \mbox{ \emph{OR} } |Q| \geq \varepsilon' t} \geq \gamma \, ,  \]
%where the probability is over both the randomness of the choice of $G$ from $\SWGm(n, c/n)$ and over the choice of the percolation graph $G_p$. \end{lemma}

%\begin{proof} [Proof of Lemma \ref{le:L-visit}]
For $t = 1, 2, \dots,$ let $Q_t$ be the set of nodes in the queue $Q$ at the
end of the $t$-th iteration of the while loop in Algorithm~\ref{alg:L-visit}
and let $Z_t$ be the number of nodes added to the queue $Q$ during the $t$-th
iteration. Notice that $|Q_0| = 1$ and 
\[
|Q_{t}| = 
\left\{
\begin{array}{cl}
0 & \mbox{ if } |Q_{t-1}| = 0 \\
|Q_{t-1}| + Z_t - 1 & \mbox{ otherwise}
\end{array}
\right.
\]
We next show that, as long as the overall number of visited nodes is below a
suitable constant fraction of $n$, the sequence $\{|Q_t|\}_t$ stochastically
dominates a diverging Galton-Watson branching process
(Definition~\ref{def:GW}). 

Let $k > 1$ be a constant and let $\tau = \inf\{t \in\mathbb{N} \,:\, |Q_t| + t
> n/k\}$ be the random variable indicating the first time the size of the queue
plus the number nodes in $R$ exceeds $n/k$. Consider any iteration $t < \tau$
of the while loop with $Q \neq \emptyset$, let $|Q \cup R| \leqslant n/k$ be
the  number of nodes in the queue or in the set $R$ at the beginning of the
while loop, and let $A$ be the set of nodes at distance larger than $L$ from
any node in $D \cup Q \cup R$ in the ring $(V, E_1)$, i.e.,  \[A = \{ v \in V \, | \,
d_{(V,E_1)}(D \cup Q \cup R,v) \geq L+1  \}\,.\] Observe that  there are at most $2
L(n/k+\log^4 n)\leq 4L(n/k)$ nodes at distance smaller than or equal to $L$ from a node in $D \cup Q \cup
R$ in $(V, E_1)$, so $|A| \geqslant n(1 - 4L/k)$.

Let $w$ be the node dequeued at the $t$-th iteration of the while loop and let
$x_1, \dots, x_{|A|}$ be the nodes in $A$. For every $i = 1, \dots, |A|$, let
$X_i$ be the random variable counting the number on nodes added to the queue
``through'' node $x_i$ during the current iteration of the while loop at line \ref{line:alg:L-visit:while}
of Algorithm~\ref{alg:L-visit}. Observe that $X_i$ is either zero (if $x_i$ is
not a bridge neighbor of $w$ in the percolation graph, or if $x_i$ is a bridge
neighbor of $w$ but it is not \good at its turn in line \ref{line:alg:L-visit:if}) or it is equal to
the size of the truncated local cluster centered at $x_i$. Moreover, $Z_t
\geqslant \sum_{i=1}^{|A|} X_i$.

Now observe that the edge $\{w,x_i\}$ exists in the percolation graph $G_p$
with probability $pc/n$, independently of the other edges: we can use the
principle of deferred decisions here, since the existence or not of each such
edge was never observed before $w$ was extracted from the queue. Moreover,
since each node in $A$ has at most $4L$ other nodes of $A$ at ring distance
less than $2L$, the probability that $x_i$ is a bridge neighbor of $w$ in $E_p$
\textit{and} it is \good for the subset $Q \cup R$ in $G_{\text{SW}}$ at its
iteration in the for loop at line \ref{line:alg:L-visit:for} of Algorithm~\ref{alg:L-visit} is at least
$pc/n (1-pc/n)^{4L}$, i.e.  the probability that $x_i$ is a bridge neighbor for
$w$ in $E_p$ and all the nodes in $A$ at ring distance at most $2L$ from $x_i$
are not.  From~\eqref{eq:truncated_LC} it follows that 
\[
\Expcc{X_i} \geqslant pc/n (1-pc/n)^{4L} \Expcc{|LC^L(x_i)|}
= pc/n (1-pc/n)^{4L} \left(\frac{1 + p - 2p^{L+1}}{1-p}\right)\,.
\]
Thus, the expected number of new nodes added to the queue in an iteration of the
while loop is 
\begin{align*}
\Expcc{Z_t \;|\; |Q_{t-1}| > 0, \, \tau > t} 
& \geqslant n (1 - 4L/k)(pc/n)(1-pc/n)^{4L} \left(\frac{1 + p - 2p^{L+1}}{1-p}\right) \\
& \geqslant pc (1 - 4L/k) (1-4Lpc/n) \frac{1 + p - 2p^{L+1}}{1-p} \\
& = \frac{pc(1+p)}{(1-p)} \left( 1 - \bigO(p^L) - \bigO(L/k) - \bigO(L/n) \right) \, .
\end{align*}

The critical value $pc(1+p)/(1-p) = 1$ is achieved for $p = \frac{\sqrt{c^{2} +
6c +1}-c-1}{2c}$. So, for every choice of $\varepsilon \in (0, 1 -
\frac{\sqrt{c^{2} + 6c +1}-c-1}{2c})$, if $p = \frac{\sqrt{c^{2} + 6c
+1}-c-1}{2c} + \varepsilon$ we can choose sufficiently large constants $L$ and
$k$ such that, whenever $n$ is large enough, $\Expcc{Z_t \;|\; |Q_{t-1}| > 0,
\, t < \tau } \geqslant 1 + \varepsilon'$, with $\varepsilon'> 0$.  

At each while iteration of Algorithm~\ref{alg:L-visit}, the node $w$ extracted
from $Q$ has at most $\Bin(n,pc/n)$ bridge neighbors in $E_p$. Since each \good
node is also a bridge neighbor in $E_p$ for the node $w$ extracted from the
queue, we further have

\begin{equation*}
\Varcc{Z_t \mid |Q_{t-1}|>0, \tau>t} \leq \Expcc{Z_t^2 \mid |Q_{t-1}|>0, \tau>t}
\leq (2L)^2 \Expcc{\Bin(n,pc/n)^2}\leq 4L^2.
\end{equation*}
Hence, we can define a Galton-Watson branching process $\{B_t\}_t$ according to
Definition~\ref{def:GW} as follows: 

\noindent
- If at the beginning of the $t$-th iteration of the while loop it holds that
  $|Q \cup R| \leqslant n/k$ then we consider an arbitrary set $\hat{A}$ such
  that each node in $\hat{A}$ is at distance larger than $L$ from any node in
  $Q \cup R$, and the size of $\hat{A}$ is exactly $\lceil n (1 - 4L/k)
  \rceil$. In this setting, we define $W_t$ as the number of new nodes in
  $\hat{A}$ added to the queue $Q$ during the $t$-th iteration of the while
  loop. 

\noindent
- Otherwise (i.e., if at the $t$-th iteration the size $|Q \cup R| > n/k$) then
  consider two arbitrary disjoint sets of nodes $\hat{Q}$ and $\hat{R}$ with
  $|\hat{Q} \cup \hat{R}| \leqslant n/k$ and an arbitrary set $\hat{A}$ such
  that each node in $\hat{A}$ is at distance larger than $L$ from any node in
  $\hat{Q} \cup \hat{R}$ and the size of $\hat{A}$ is exactly $\lceil n (1 -
  4L/k) \rceil$. In this setting, we define   $W_t$ as the number of new nodes
  in $\hat{A}$ that would be added to the queue if at the beginning of the
  $t$-th iteration of the while loop it was $Q = \hat{Q}$ and $R = \hat{R}$.

Notice that $\{W_t\}_t$ is a sequence of i.i.d. random variables with
$\Expcc{W_t} > 1$ (thus, according to Theorem~\ref{thm:branchingprocess},
$\{B_t\}_t$ is a diverging branching process) and finite variance. Observe also
that the pair $(B_t,Q_t)$ is a coupling between the two considered processes
(see Definition~\ref{def:coupli} in Subsection \ref{app:maths}) such that, with probability
$1$, at each round $t$ either $|Q_t \cup R_t| > n/k$ or it holds that $Z_t
\geqslant W_t$. Thanks to Lemma~\ref{lem:stochastic_domination_coupling},  we
thus get that, at each round $t$,  
\[ 
\Prc{|R_t \cup Q_t| \geq n/k \mbox{ \emph{OR} } |Q_t| \geq \varepsilon' t}
\geqslant \Prob{}{B_t \geqslant \varepsilon' t} . 
\]
The lemma then follows by applying Lemma~\ref{lm:bp.lowerbound}
in Subsection \ref{sec:GW}.
%\end{proof}

\smallskip \noindent 
\textbf{Remark.}
The lemma above implies that
the nodes visited by the end of the sequential \SIRimmtrunc in
Algorithm~\ref{alg:L-visit} reaches size at least $n/k$, with probability at least $\gamma$. This result thus shows a linear lower bound on the  size of the connected component of the source $s$ in $G_p$. 

%\section{The \SWG Model over the Threshold, Revisited} \label{sec:boot}

\subsection{Parallelization of the sequential BFS visit}
\label{ssec:parallelizatiobfs}

In this section, we strenghten the analysis of the visit in the graph $G_p$, when the percolation probability $p$ is over the
threshold. 

Our goal here  is to  prove that, if we explore the connected components of $\log n$ nodes taken arbitrarily in the graph, then this process leads us, \emph{w.h.p.}, to the visit of a linear fraction of the nodes in the percolated graph, within $\Theta(\log n)$ number of hops.

We follow an approach that proceeds along the general lines of Subsection~\ref{ssec:bootstrap1}, albeit with important differences and some technical challenges. We begin by introducing Algorithm~\ref{alg:par_L-visit} below, which is partly ``parallel'' extension of the sequential BFS visit described by Algorithm~\ref{alg:L-visit}. We assume Algorithm~\ref{alg:par_L-visit} is run on an input $(G, G_p, I_0,D_0)$, where $I_0$ is an arbitrary subset of initiators and $D_0 \subseteq V \setminus I_0$ is a set of deleted nodes. 

\begin{algorithm}
\caption{\SIRimmtruncpar}
\small{
\textbf{Input}: A small-world graph $G_{\text{SW}} = (V, E_{\text{SW}})$ and a
subgraph $H$ of $G_{\text{SW}}$; a set of initiators $I_0
\subseteq V$; a set of deleted nodes $D_0 \subseteq V \setminus I_0$.
\begin{algorithmic}[1]
	\State $Q = I_0$
	\State $R = \emptyset$
	\State $D = D_0$
%	\While{$Q \neq \emptyset$ and $Q < \beta \log n$}
%	    \State $w = \dequeue(Q)$
%	    \State $R = R \cup w$ 
 %	            \For {each bridge-neighbor $x$ of $w$ in $H$}
 %	              \If {$x$ is \good for $R \cup Q$}		\Comment{We are 
% 	              using Definition \ref{def:free_node}}
% 	                  \For{each node $y$ in the $L$-truncated local cluster $\LC ^L (x)$} 
% 	                       \State $\enqueue(y,Q)$
% 	                  \EndFor
% 	              \EndIf
% 	            \EndFor
% 	 \EndWhile
     \While{$Q \neq \emptyset$}\label{line:parvisit:while}
	   \State $A = R\cup Q \cup D$	\Comment{This is the overall set of nodes 
	   visited so far}
	   \State $X = \text{bridge-neighors}(Q)$ \Comment{Set of bridge-neighbors in $H$
	   of nodes in $Q$}
       \State $Q' = Q$ 
       \State $Q = \emptyset$
	   \While{$Q' \neq \emptyset$ }
	    \State $w = \dequeue(Q')$
	    \State $R = R \cup w$ 
 	            \For {each $x\in X$}
 	              \If {$x$ is \good for $(X, A)$}  \Comment{We are 
 	              using Definition \ref{def:good_nodes_par}}
 	           \For{each node $y$ in the $L$-truncated local cluster $\LC ^L (x)$} 
 	                       \State $\enqueue(y,Q)$
 	                  \EndFor
 	              \EndIf
 	            \EndFor
 	 \EndWhile
     \EndWhile\label{line:parvisit:endwhile}
\end{algorithmic}}
\label{alg:par_L-visit}
\end{algorithm}

In the remainder of this section, $Q_t$ and $R_t$ respectively denote the
subsets $Q$ and $R$ at the end of the $t$-iteration of the while loop in
line~$10$. Consistently with the notation used in Section~\ref{sec:wup}, we
also let $S_t = V \setminus ( R_t\cup Q_t)$.

\begin{lemma}\label{thm:exponential_growth}
Let $V$ be a set of $n$ nodes, $I_0 \subseteq V$ a set of initiators and $D_0 \subseteq V \setminus I_0$ a set of deleted nodes such that $|D_0| \leq \log^4 n$. For every
$\varepsilon>0$, $c>0$ and for every contagion probability $p$ such that
\[
\frac{\sqrt{c^{2} + 6c +1}-c-1}{2c} + \varepsilon \leq p \leq 1,
\] 
there are positive parameters $L,k,\beta,\delta$ that depend only on $c$ and
$\varepsilon$ such that the following holds. Sample a graph $G=(V,E)$ according
to the $\mathcal{SWG}(n,c/n)$ distribution, and let $G_p$ be the percolation
graph of $G$ with parameter $p$. Run the \SIRimmtruncpar in
Algorithm~\ref{alg:par_L-visit} on input $(G,G_p,I_0,D_0)$: in every iteration $t
\geq 1$ of the while loop at line \ref{line:parvisit:while} in Algorithm~\ref{alg:par_L-visit}, for
every integer $i \geq \beta \log n$ and $r \geq 0$ such that $i+r\leq n/k$:
\begin{equation}
    \Prc{|{Q}_{t}|\geq (1+\delta)i \mid \condpar}\geq 1-\frac{1}{n^2} \,     .
    \label{eq:lem:exp_growth}
\end{equation}
\end{lemma}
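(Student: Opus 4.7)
The plan is to combine an expected-growth calculation (a parallelised version of the conditional expectation bound already done inside the proof of Lemma~\ref{le:L-visit}) with a concentration inequality for sums of partly dependent Bernoulli variables (Theorem~\ref{thm:svante}). Fix a realisation of $Q_{t-1}, R_{t-1}, D$ compatible with $\condpar$, and condition on all random choices of the previous iterations. Let $A\subseteq V$ be the set of nodes whose ring-neighbourhood of radius $2L$ is disjoint from $Q_{t-1}\cup R_{t-1}\cup D$; since $i+r\le n/k$ and $|D|\le\log^4 n$, we have $|A|\ge n(1-O(L/k))$ for $n$ large. By the principle of deferred decisions, the potential bridges from nodes of $Q_{t-1}$ to nodes of $A$, the bridges used to certify goodness of candidate endpoints, and the cycle-edge percolations inside the $L$-ring-neighbourhoods of nodes of $A$, are still independent and fresh.

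For the expectation, I would count each pair $(w,x)\in Q_{t-1}\times A$: the probability that $\{w,x\}$ is an active bridge of $G_p$ is $pc/n$, independently across pairs; conditional on this, the probability that $x$ is good at its turn (no other active bridge lands in its $2L$-ring-neighbourhood) is $(1-pc/n)^{O(Li+L^2)}=1-O(L/k)$; conditional on $x$ being good, Fact~\ref{lemma:local_cluster} gives $\Expcc{|\LC^L(x)|}=(1+p-2p^{L+1})/(1-p)$. Summing over the $i|A|$ pairs yields
\[
\Expcc{|Q_t|\mid\condpar}\ \geq\ i\cdot\frac{pc(1+p)}{1-p}\bigl(1-O(p^L)-O(L/k)\bigr).
\]
Since $p\ge\frac{\sqrt{c^2+6c+1}-c-1}{2c}+\varepsilon$, the leading factor $pc(1+p)/(1-p)$ exceeds $1$ by a positive constant $\eta=\eta(c,\varepsilon)$; choosing $L=L(c,\varepsilon)$ large and then $k=k(c,\varepsilon)$ large absorbs the error and produces $\Expcc{|Q_t|\mid\condpar}\ge(1+2\delta)i$ for a constant $\delta=\delta(c,\varepsilon)>0$.

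For the concentration, I would decompose each contribution further into at most $2L+1$ Bernoulli indicators $Y_{(w,x,y)}$, one per candidate ring-vertex $y$ within ring distance $L$ of $x$: $Y_{(w,x,y)}=1$ iff $\{w,x\}$ is an active bridge, $x$ is good, and every cycle edge on the ring-path from $x$ to $y$ percolates. Then $|Q_t|=\sum_\alpha Y_\alpha$, with $|\mathcal{A}|=O(iLn)$. Each $Y_{(w,x,y)}$ depends only on (i) the bridge $\{w,x\}$, (ii) bridges with one endpoint at ring-distance $\le 2L$ from $x$, and (iii) cycle edges at ring-distance $\le L$ from $x$. Hence two indicators can be adjacent in the dependency graph $\Gamma$ only if $w_1=w_2$, the unordered pairs $\{w_1,x_1\}$ and $\{w_2,x_2\}$ coincide, or the ring distance between $x_1$ and $x_2$ is $O(L)$. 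A direct count gives $\Delta(\Gamma)\le C(L)\cdot i$ for an absolute constant depending only on $L$. Feeding this into Theorem~\ref{thm:svante} delivers
\[
\Prc{|Q_t|\le(1+\delta)i\mid\condpar}\ \le\ \exp\!\bigl(-\Omega(\delta^2 i/C(L))\bigr),
\]
which is at most $1/n^2$ once $\beta=\beta(c,\varepsilon)$ is chosen large enough and $i\ge\beta\log n$.

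The main obstacle is the bookkeeping around $\Gamma$ together with the fact that the natural indicators $Y_{(w,x,y)}$ are \emph{not} all Bernoullis with the same parameter: the marginal of $Y_{(w,x,y)}$ scales like $(pc/n)\cdot p^{d_{\text{ring}}(x,y)}$, while Theorem~\ref{thm:svante} is stated for $Y_\alpha\sim\mathrm{Be}(p)$ with a single $p$. Two clean workarounds are (a) to partition the indicators into $L+1$ distance classes of equal marginal probability and apply Theorem~\ref{thm:svante} inside each class (then union-bound over the $O(L)$ classes), or (b) to split the argument into a concentration step for the number $M$ of good bridge-activations (all of marginal $pc/n$, so Theorem~\ref{thm:svante} applies directly) followed by a Hoeffding concentration (Theorem~\ref{thm:hoeff}) for $\sum_{j=1}^M|\LC^L(x_j)|$, whose summands are \emph{independent} because the good property forces pairwise ring distance $>2L$ between the selected $x_j$'s, and each summand is bounded by $2L+1$. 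Both routes produce the same $\exp(-\Omega(i))$ tail and close the proof by the choice of $\beta$.
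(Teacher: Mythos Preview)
Your route~(b) is exactly the paper's argument: first apply Theorem~\ref{thm:svante} to concentrate the number of \emph{good} targets, then apply Hoeffding (Theorem~\ref{thm:hoeff}) to the sum of their truncated local clusters, which are mutually independent (pairwise ring distance $>2L$) and bounded by $2L+1$. One important correction to how you phrase it: the first step must be indexed by target node $x\in A$, setting $Y(x)=\mathbf{1}[x\text{ is free for }(X_t,R_{t-1}\cup Q_{t-1})]$, \emph{not} by pair $(w,x)$. The marginal of $Y(x)$ is then $\approx i\cdot pc/n$ (not $pc/n$), and, crucially, the dependency graph on $\{Y(x)\}_{x\in A}$ has $\Delta(\Gamma)\le 8L$, a constant depending only on $L$. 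This is precisely the paper's Lemma~\ref{lemma:Y_t__not_too_small}.

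Your direct approach with indicators $Y_{(w,x,y)}$ has a genuine gap, and it is not the varying marginals. With $\Delta_1(\Gamma)=\Theta(Li)$ (your own count), $t=\Theta(\delta i)$, and $Np=\Expcc{|Q_t|}=\Theta(i)$, Theorem~\ref{thm:svante} gives an exponent
\[
-\frac{8t^{2}}{25\,\Delta_{1}(\Gamma)\,Np}\;=\;-\Theta\!\left(\frac{\delta^{2}i^{2}}{Li\cdot i}\right)\;=\;-\Theta\!\left(\frac{\delta^{2}}{L}\right),
\]
a constant independent of $i$; the claimed $\exp(-\Omega(\delta^2 i/C(L)))$ is off by a factor of $i$, so you never reach $n^{-2}$. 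Route~(a) does not repair this: partitioning by ring distance $d(x,y)$ equalises the marginals but leaves the dependency through the goodness condition intact, so within each class you still have $\Delta(\Gamma)=\Theta(Li)$. The whole point of indexing by $x$ alone in route~(b) is that it collapses the $i$-fold multiplicity coming from $Q_{t-1}$ into a single Bernoulli per target, bringing $\Delta(\Gamma)$ down to $O(L)$; this is what makes Janson's bound deliver $\exp(-\Omega(i))$.
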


In what follows, we introduce some definitions and lemmas preliminary to the proof of the above lemma.
  
We first need to slightly revisit the notion of free node given by
Definition~\ref{def:free_node} for the Sequential \SIRimmtrunc, adapting it to
the second phase of Algorithm~\ref{alg:par_L-visit} .

\begin{definition}[\good nodes]\label{def:good_nodes_par} 
Consider $X, A\subset V$. A node $x\in X$ is \emph{free} for the pair 
$(X, A)$ if the following holds:
\begin{enumerate}
	\item $x$ is at distance on the ring $E_1$ at least $L+1$ from every 
	node in $A$; 
	\item $x$ is at distance on the ring $E_1$ at least $2L+1$ from every 
	other node in $X$.
\end{enumerate}
If $Q_{t-1}$ is the queue $Q$ at the end of the $(t-1)$-th iteration of 
the while loop in line 10 of Algorithm \ref{alg:par_L-visit}, we denote 
by $X_t$ the set of bridge-neighbors (w.r.t. $E_p$) of nodes in $Q_{t-1}$, while 
$Y_t\subseteq X_t$ is the subset of free nodes for the pair $(X_t, 
R_{t-1}\cup Q_{t-1})$.
\end{definition}
Definition~\ref{def:good_nodes_par} implies the following properties for the
generic, $t$-th iteration of the while loop at line \ref{line:parvisit:while} of
Algorithm~\ref{alg:par_L-visit}. At the beginning of the iteration, we
initialize set $Q_t=\emptyset$ and we consider the set $Y_t$ of \good nodes for
$(X_t, R_{t-1}\bigcup Q_{t-1})$. For each node $x \in Y_t$, we add to the
\emph{queue} $Q_t$ the set $|\LC^L(x)|$ of the $L$-truncated local cluster of
node $x$ (Definition~\ref{def:good_nodes_par}): the set $Q_t$ can thus be seen
as the nodes that $Q_{t-1}$ infects via its free bridge-neighbors in round $t$.
The process stops in the first round $\tau$, for which queue $Q_{\tau}$ is
empty, i.e. $\tau=\{t>0:Q_t=\emptyset \}$. Hence, for each $t \leq \tau$
\begin{equation*}
    Q_t=\bigcup_{x\in Y_t}\LC^L(x),
\end{equation*}
and, if we label  the nodes in $Y_t$ as $1, 2, \ldots, |Y_t|$,  we get  
\begin{equation}
    \label{eq:Z_t_as_sum}
    |Q_t|=|\LC^L(1)|+\dots+|\LC^L(|Y_t|)|
\end{equation}
\label{def:Z_t_par}
since the subsets  $\LC^L(j)$'s are mutually disjoint from Definition 
\ref{def:good_nodes_par}.

In the remainder, we denote by $A \subseteq S_{t-1}$ the subset of nodes that
are at  ring distance at least $L+1$ from each node in $Q_{t-1}\bigcup R_{t-1}\cup D$
at the end of of the $(t-1)$-th iteration of the while loop at line 10 of
Algorithm~\ref{alg:par_L-visit}.  Under the hypotheses of
Lemma~\ref{thm:exponential_growth} on $|Q_{t-1}| + |R_{t-1}|$, we have:
\begin{equation}
    n\left(1-\frac{4L}{k}\right) \leq |A|\leq n-(|R_{t-1}|+|Q_{t-1}|+|D|)\leq n\,.
    \label{eq:A}
\end{equation}
Moreover, we can write
\begin{equation}
\label{eq:good_nodes_as_sum}
   |Y_t| = \sum_{x \in A}Y(x),
\end{equation}
where each $Y(x)$ is a Bernoulli random variable, whether  node $x \in A$ is
\good. By a standard argument, we can bound the conditional expectation of
$|Y_t|$ as follows.

\begin{fact}
\label{fact:exp_value_Y}
Under  the hypotheses of Lemma \ref{thm:exponential_growth} we have
\begin{equation}
    i\cdot p \cdot c \cdot \left(1-\frac{8L}{k}\right) \leq \Expcc{|Y_{t}| \mid
    \condpar} \leq i\cdot p \cdot c\, .
    \label{eq:fact_bound}
\end{equation}
\end{fact}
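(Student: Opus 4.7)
The plan is to bound $\Expcc{|Y_t|\mid \condpar}$ by starting from the decomposition $|Y_t|=\sum_{x\in A} Y(x)$ in \eqref{eq:good_nodes_as_sum} and invoking the principle of deferred decisions on the $\SWGm(n,c/n)$ construction. Each potential bridge can be treated as an independent Bernoulli$(c/n)$ trial coupled with a percolation trial of probability $p$, so that every potential bridge is present in $G_p$ independently with probability $pc/n$. All estimates below hold uniformly over the specific history leading to $|Q_{t-1}|=i$ and $|R_{t-1}|=r$, so they carry over to the conditional expectation.

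Fix $x\in A$. By the definition of $A$ given just before \eqref{eq:A}, $x$ lies at ring distance at least $L+1$ from $Q_{t-1}\cup R_{t-1}\cup D$, so no ring edge can connect $x$ to $Q_{t-1}$, and the event $\{x\in X_t\}$ is determined exclusively by the $i$ potential bridges $\{x,q\}$ with $q\in Q_{t-1}$. Independence of these Bernoulli$(pc/n)$ trials gives $\Prc{x\in X_t}=1-(1-pc/n)^i$. Since $Y(x)$ is bounded above by the indicator of $\{x\in X_t\}$, the elementary bound $1-(1-y)^i\leq iy$ together with $|A|\leq n$ from \eqref{eq:A} immediately yields the upper half of \eqref{eq:fact_bound}:
\[
\Expcc{|Y_t|\mid\condpar}\leq |A|\cdot\frac{ipc}{n}\leq ipc.
\]

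For the lower half I would write $\{Y(x)=1\}=E_x\cap F_x$, where $E_x=\{x\in X_t\}$ and $F_x$ is the event that no other ring-node at distance at most $2L$ from $x$ belongs to $X_t$. The event $E_x$ is determined only by the potential bridges incident to $x$, while $F_x$ is determined only by those incident to the (at most) $4L$ other ring-neighbors of $x$; since $x\notin Q_{t-1}$, these two sets of potential edges are disjoint, so $E_x$ and $F_x$ are independent and
\[
\Prc{Y(x)=1}=\bigl[1-(1-pc/n)^i\bigr]\,(1-pc/n)^{4Li}.
\]
Combining the Taylor-type bound $1-(1-y)^i\geq iy\bigl(1-(i-1)y/2\bigr)$ with Bernoulli's inequality $(1-y)^m\geq 1-my$, and using $i\leq n/k$, one obtains $\Prc{Y(x)=1}\geq (ipc/n)\bigl(1-\bigO(Lpc/k)\bigr)$. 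Summing over $x\in A$ and invoking $|A|\geq n(1-4L/k)$ from \eqref{eq:A} then gives $\Expcc{|Y_t|\mid \condpar}\geq ipc\cdot(1-4L/k)(1-\bigO(Lpc/k))$, which collapses to the clean form $ipc(1-8L/k)$ once $k$ is chosen sufficiently large relative to $L$ and $c$, consistent with the parameter regime of Lemma~\ref{thm:exponential_growth}.

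The main obstacle is purely combinatorial bookkeeping: verifying that the various $\bigO(L/k)$ and $\bigO(Lpc/k)$ multiplicative losses can be absorbed into the single factor $8L/k$ asserted in \eqref{eq:fact_bound}. Beyond this, the argument relies only on two ingredients, namely the independence of disjoint sets of potential bridges in the $\SWGm(n,c/n)$ model and elementary Bernoulli-type inequalities; no concentration inequality, branching-process argument, or dependency-graph machinery is required at this stage.
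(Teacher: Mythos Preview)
Your proposal is correct and follows essentially the same route as the paper's own proof: both bound $\Pr[Y(x)=1]$ from below by $[1-(1-pc/n)^{i}](1-pc/n)^{4Li}$, then use elementary Bernoulli-type inequalities together with $|A|\geq n(1-4L/k)$ to obtain the claimed lower bound, while the upper bound is the trivial $|A|\cdot ipc/n\leq ipc$. Your explicit factorization $\{Y(x)=1\}=E_x\cap F_x$ with the independence justification via disjoint edge sets is a slight elaboration of what the paper leaves implicit, and the specific inequalities you invoke differ cosmetically from the paper's choice of $(1+y)^n\leq 1/(1-ny)$, but the argument is otherwise identical.
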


\begin{proof}
We know a  node $x$ is \good if it is connected via  a bridge in $\percgraph$
with at least one node in $Q_{t-1}$ \textit{and} no node, within ring distance
$2L$ from $x$, is connected via a bridge in $\percgraph$ with a node in
$Q_{t-1}$. Therefore,

\begin{equation} \label{eq:lowbounY}
    \left(1-\left(1-p  q\right)^{i}\right) \left(1- p q\right)^{4Li} \leq \Prc{Y(w')=1} \leq  i  p  q\, .
\end{equation}
Using the assumptions $i + r \leq n/k$, $p \cdot q \cdot n < 1$ and the
inequalities 
\[
(1+y)^n \leq \frac{1}{1 -n y}, \ \ n \in \mathbf{N}, \ y \in [-1,0]  \ \mbox{ and }  \ (1+y)^r \geq 1 + y r\, , \   y \geq -1, \ r \in \mathbf{R} \setminus (0,1) \, ,
\]  
we get that the LHS of \eqref{eq:lowbounY} is not smaller than 

\begin{align*}
   & (1-p q)^{i} (1- p  q)^{4Li} \geq \left(1-\frac{1}{1+ i p q}\right)\cdot \left(1-4L p q\right)\geq 
   \geq i p q \left(1 - \frac{i p  q}{1 + i p q}\right) \left(1-4L i  pq\right) \geq \\
   & \geq i pq (1 - i pq)  (1 - 4Li  p  q) \geq i  p q  (1 -5L p q) \geq i  p q \left(1 - \frac{5L p}{k}\right)\, .
\end{align*}

Consequently, from~\eqref{eq:A}, \eqref{eq:good_nodes_as_sum},
and~\eqref{eq:lowbounY} we get~\eqref{eq:fact_bound}.
\end{proof}

Our next step is to prove that w.h.p., $|Y_{t}|$ does not deviate much from its
expectation. As we noted earlier, $|Y_{t}|$ can be expressed as the  sum of
Bernoulli random variables $Y(x)$ with $x \in A$.  Unfortunately, these
variables are not    mutually independent: for instance, $Y(x)= 1$ implies
$Y(x')=0$ for every other  $x'\in A$ that lies  within ring distance  $2L$ in
$\percgraph$ from $x$.  However, we are able to prove the following
concentration bound, by leveraging the key fact that the variables above only
have \emph{local}, mutual correlations. 

\begin{lemma}
\label{lemma:Y_t__not_too_small}
Under the hypotheses of Lemma~\ref{thm:exponential_growth}, if $\beta$ and $k$
are sufficiently large, we have
\begin{equation*}
		\Prc{|Y_t| \geq i\cdot p \cdot c \cdot 
		\left(1-\frac{9L}{k}\right)\mid \condpar} \geq 1 - 
		\frac{1}{n^3} \, .
\end{equation*}
\end{lemma}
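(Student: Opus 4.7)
The plan is to apply the concentration inequality for partly dependent Bernoulli sums (Theorem \ref{thm:svante}) to $|Y_t| = \sum_{x \in A} Y(x)$, exploiting the fact that these indicator variables have only \emph{local} correlations on the ring.

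First I identify the dependency structure. By Definition \ref{def:good_nodes_par}, the event $\{Y(x) = 1\}$ is determined entirely by the percolated bridges having one endpoint in the $2L$-ring-ball $B(x)$ around $x$ and the other endpoint in $Q_{t-1}$. Since all bridges of the $\mathcal{SWG}(n,c/n)$ model are mutually independent and independently percolated, the variables $Y(x)$ and $Y(x')$ can be correlated only if $B(x)\cap B(x') \neq \emptyset$, i.e.\ only if $x,x'$ lie at ring distance at most $4L$. Hence in the dependency graph $\Gamma$ on $A$ each vertex has at most $8L$ neighbors, so $\Delta_1(\Gamma) \leq 8L+1$.

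Next, I argue that up to lower-order corrections the $Y(x)$'s are identically distributed Bernoullis with a common parameter $p_0$. By the vertex-transitivity of the ring, $\Prc{Y(x) = 1}$ depends only on how many nodes of $Q_{t-1}$ lie inside $B(x)$; since $|Q_{t-1}\cup R_{t-1}\cup D| \leq n/k$, all but at most $O(L n/k)$ candidates $x \in A$ have $B(x)$ disjoint from $Q_{t-1}$, and for those $Y(x)$ takes the common value $p_0$ already computed in the proof of Fact \ref{fact:exp_value_Y}. To handle the $O(Ln/k)$ remaining candidates whose ring-balls intersect $Q_{t-1}$, I will lower-bound each such $Y(x)$ by an independent Bernoulli of parameter $p_0$ via a coupling that reuses the same underlying bridge randomness, so that the dependency structure is preserved and Theorem \ref{thm:svante} can be applied to the sum of the dominated variables, which still has expectation at least $ipc(1-8L/k)$ by (the proof of) Fact \ref{fact:exp_value_Y}.

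Finally I apply Theorem \ref{thm:svante} with deviation $t^\star = ipc\cdot L/k$, so that the bad event $\{|Y_t| < ipc(1 - 9L/k)\}$ is contained in $\{|Y_t| < \Expcc{|Y_t|} - t^\star\}$. Since $Np_0 \leq \Expcc{|Y_t|} \leq ipc$ and $\Delta_1(\Gamma) = O(L)$, the resulting tail is of order
\[
\exp\!\left(-\Omega\!\left(\frac{(ipcL/k)^2}{L \cdot ipc}\right)\right) = \exp\!\left(-\Omega\!\left(\frac{ipcL}{k^2}\right)\right).
\]
For $i \geq \beta \log n$ with $\beta$ chosen sufficiently large as a function of $c,L,k$, this is at most $1/n^3$, yielding the lemma. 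The most delicate point, and the part I expect to require the most care, is reducing the possibly non-identical distributions of the $Y(x)$'s to the identically distributed setting required by Theorem \ref{thm:svante}; the coupling to worst-case Bernoullis sketched above is the natural way to do this, but one must verify that it does not increase the degree of the dependency graph beyond $O(L)$.
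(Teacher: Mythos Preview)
Your proposal is correct and follows essentially the same route as the paper: bound the dependency-graph degree by $8L$ via the ring-distance-$4L$ criterion, then apply Theorem~\ref{thm:svante} with deviation $t^\star \sim ipc\cdot L/k$ and $Np \leq ipc$, yielding a tail of order $\exp(-\Omega(ipc\,L/k^2))$ which is at most $n^{-3}$ once $i \geq \beta\log n$ for $\beta$ large enough.

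The one place you are more careful than the paper is the identical-distribution hypothesis of Theorem~\ref{thm:svante}. The paper simply applies the theorem with $f = \Prc{Y(x)=1} \leq ipc/n$ as if it were the common Bernoulli parameter, without commenting on the fact that nodes $x$ whose $2L$-ring-ball meets $Q_{t-1}$ have a slightly perturbed success probability. Your coupling-to-worst-case-Bernoullis plan is a sound way to close that gap while keeping the dependency graph unchanged (each $\tilde Y(x)\leq Y(x)$ can be produced from $Y(x)$ and an auxiliary independent coin, so the degree stays $\leq 8L$). In fact this detour is not strictly necessary: Corollary~2.4 in~\cite{Svante04}, which the paper invokes, already covers non-identically-distributed indicators, with $Np$ replaced by any upper bound on $\Expcc{X}$; the paper's statement of Theorem~\ref{thm:svante} is just a simplified special case. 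So your concern is legitimate relative to the theorem \emph{as stated in the paper}, but both the paper's direct application and your coupling workaround lead to the same bound.
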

\begin{proof}
Recall that for any $x \in A$, $Y(x)$ is the Bernoulli random variable that
indicates whether $x$ is \good. Since $x$ is free only if it is connected via a
bridge in $G_p$ with at least one node in $Q_{t-1}$ we have, for every $x\in
A$:
\begin{equation}
\label{eq:bound_f}
    f = \Prc{Y(x)=1}\leq \frac{i \cdot p \cdot c}{n}
\end{equation}

Now, for any $x\in A$, denote by $\mathcal{N}_{E_1}^{2L}(x)$ the set of nodes
that are within ring distance $2L$ from $x$. For any other $x' \in A$,
Definition \ref{def:good_nodes_par} implies that $Y(x)$ and $Y(x')$ are
mutually dependent if and only if $\mathcal{N}_{E_1}^{2L}(x) \cap
\mathcal{N}_{E_1}^{2L}(x') \ne \emptyset$. Hence, we can bound the maximum
number of random variables $Y(x')$ that are correlated with $Y(x)$ as follows.
Consider $\mathcal{N}_{E_1}^{2L}(x) = \{x - 2L, \dots, x, \dots, x + 2L\}$. If
$\mathcal{N}_{E_1}^{2L}(x) \cap \mathcal{N}_{E_1}^{2L}(x) \neq \emptyset$ for
some other $x' \in A$, it must be the case that either $x < x' \land x' - 2L
\leq x + 2L$, or $x' < x \land x - 2L \leq x' + 2L$. The former happens for
every $x'$ such that $x < x' \leq x + 4L$ (notice that exactly $4L$ nodes can
meet this condition), while the latter happens for every $x'$ such that $x - 4L
\leq x' < x$ (again, exactly $4L$ nodes can meet this condition).  It thus
follows that, for a fixed $Y(x)$, at most $8L$ other random variables can be
correlated with  $Y(x)$. This property can be described by the dependency graph
$\Gamma$ on the subset $\{ Y(x) \}_{x \in A}$ (see
Definition~\ref{def:dependency_graph} in Appendix~\ref{app:maths}). In our
case, the maximum degree $\Delta$ of the dependency graph  is $8L$, whence we
have $\Delta_1(\Gamma) =  8L+1$ in Definition~\ref{def:dependency_graph}.
 
We can thus apply Corollary $2.4$ in~\cite{Svante04} (see
Theorem~\ref{thm:svante} in Appendix~\ref{app:maths})). In more detail, we use
Fact~\ref{fact:exp_value_Y}, \eqref{eq:A} and~\eqref{eq:bound_f}, the
assumption $p \cdot c < 1$, and  we apply Theorem~\ref{thm:svante} to complete
the proof:
\begin{align}
    & \Prc{|Y_t|\leq i p  c \left(1-\frac{9L}{k}\right)\mid \condpar} \leq \notag \\
    & \leq \Prc{|Y_t|\leq i p  c \left(1-\frac{8L}{k}\right) \left(1-\frac{L}{k}\right)\mid \condpar} \leq  \notag \\
    & \leq \Prc{|Y_t|\leq \Expcc{|Y_{t}| \mid \condpar} \left(1 - \frac{L}{k}\right)\mid \condpar} \leq \notag \\
    & \leq \exp{\left(\frac{-8(\Expcc{|Y_{t}|\mid \condpar})^2}{25(L^2/k^2)\cdot \Delta_1(\Gamma) \cdot |A| \cdot f}\right)}\leq \exp{\left(\frac{-8  i^2  p^2 c^2 \left(1 - \frac{8L}{k}\right)^2}{25(L^2/k^2)(8L+1)i p  c}\right)} \leq \notag \\
    & \leq \exp{\left(\frac{-8 i p c  \left(1 - \frac{8L}{k}\right)^2}{25(L^2/k^2)(8L+1)}\right)} \leq \exp{\left(\frac{-8 \beta \log n p  c \left(1 - \frac{8L}{k}\right)^2}{25(L^2/k^2)(8L+1)}\right)}\leq  n^{-3} \,,
    \label{eq:thm_applicato}
\end{align}
where the last equation holds whenever $k$ and $\beta$ are sufficiently large
constants (depending on $\varepsilon$).
\end{proof}

Now we are ready to conclude the proof of Lemma~\ref{thm:exponential_growth}.

\begin{proof}[Proof of Lemma \ref{thm:exponential_growth}]
Essentially, Lemma~\ref{lemma:Y_t__not_too_small} implies that, w.h.p.,
$|Y_{t}|$ is at least $i\cdot p\cdot q \cdot n$ up to a constant factor that
can be made arbitrarily close to $1$, provided constants $k$ and $\beta$ are
sufficiently large. Next, using~\eqref{eq:Z_t_as_sum}
and~\eqref{eq:truncated_LC} in Lemma~\ref{lemma:local_cluster} and applying
Wald's equation (see Lemma~\ref{lem:Wald-equation} in Appendix~\ref{app:maths})
we have:
\begin{align*}
    &\Expcc{|Q_{t}| \mid \condpar} = \Expcc{|\LC^L(1)|}\Expcc{|Y_t| \mid \condpar}= \\ 
    &= \left(\frac{1+p}{1-p} - \frac{2p^{L+1}}{1-p}\right) \cdot \Expcc{|Y_{t} | \condpar} \,.
\end{align*}
Omitting the conditioning on the event $\{\condpar\}$ for the sake of brevity
in the remainder of this proof, Lemma~\ref{lemma:Y_t__not_too_small} implies $
|Y_t| \geq p\cdot c \cdot i \cdot (1-9L/k)$ with probability at least $1 -
n^{-3}$. Moreover, by definition of $Q_t$, 
\begin{equation*}
  \Prc{  |Q_t| \geq \sum_{x=1}^{p\cdot c \cdot i (1-9L/k)} 
  |\LC^{L}(x)|\mid |Y_t |\geq p\cdot c \cdot i \cdot (1-9L/k)}=1.
\end{equation*}
If we set $Z=\sum_{x=1}^{p\cdot c \cdot i(1-9L/k)}|\LC^L(x)|$ the above
inequality implies
\begin{equation}
	\Prc{|Q_t| \leq z \mid \{|Y_t| \geq p\cdot c \cdot i \cdot 
	(1-9L/k)\}} \leq \Prc{Z \leq z} \, . \label{eq:stoc_domination_Q_X} 
\end{equation}
Again from Wald's equation,  
\begin{equation*}
	\Expcc{Z}=\frac{p(1+p)}{1-p}\cdot c \cdot i\cdot 
	\left(1-\frac{9L}{k}\right)\left(1-\frac{2p^{L+1}}{(1-p)(1+p)}\right)=\mu.
\end{equation*}
Hence, for sufficiently large $n$, from the law of total probability, from
Lemma~\ref{lemma:Y_t__not_too_small} and from~\eqref{eq:stoc_domination_Q_X},
we have:
\begin{align}
	& \Prc{|Q_{t}| \leq \mu \left(1-\frac{L}{k}\right)}  \leq 
	\Prc{|Q_{t}| \leq \left(1 - \frac{L}{k}\right)\mu \mid 
	\{|Y_t| \geq p\cdot c\cdot i \cdot 
	(1-9L/k)\}} + \frac{1}{n^3}  \notag \\
	& \leq \Prc{Z \leq \left(1 - \frac{L}{k}\right)\mu} + \frac{1}{n^3}  
	\leq_{(*)}  2\exp{\left(\frac{-2\mu^2}{(k^2/L^2) \cdot p\cdot i\cdot q \cdot (1-9L/k) 
	\cdot 4L^{2}}\right)} + \frac{1}{n^3}\notag \\ & \leq _{(**)} 
	2\exp{\left(\frac{- c \cdot \beta \log n }{4k^2}\right)} + 
	\frac{1}{n^3} \leq 3n^{-3} \leq n^{-2} \, , \notag
\end{align}
where in (*) we used the Hoeffding inequality (see Theorem~\ref{thm:hoeff} in
the Appendix), by leveraging the fact that the random variables counting the
number  of infectious nodes in each local cluster are mutually independent and,
moreover, they range between $1$ and $2L+1$. Moreover, (**) holds if we take
$\beta$ and $k$ sufficiently large. Recalling that for simplicity we omitted
the conditioning on $\condpar$, the above derivations imply
\begin{align}
	& \Prc{|Q_t| \geq \frac{pc(1+p)}{(1-p)} \cdot i\cdot 
	\left(1 - \frac{10L}{k}\right)\left(1-\frac{2p^{L+1}}{(1-p)(1+p)}\right) \mid 
	\condpar}\notag \\ & \geq 1 - \frac{1}{n^2} \, . \label{eq:Q_t_grows_final}
\end{align}
The proof of Lemma~\ref{thm:exponential_growth} then follows by observing
that, since $p=\frac{\sqrt{c^{2} + 6c +1}-c-1}{2c} + \varepsilon$, we can fix
suitable values for constants $k$ and $L$, so that 
\begin{equation}
    c \cdot \frac{p(1+p)}{1-p}\left(1 -
    \frac{2p^{L+1}}{(1+p)(1-p)}\right)\left(1 -
    \frac{10L}{k}\right)=\left(1+\delta\right) \, , 
    \label{eq:1+delta}
\end{equation} 
for some constant $\delta>0$. Together, \eqref{eq:Q_t_grows_final} and
\eqref{eq:1+delta} imply \eqref{eq:lem:exp_growth} in
Lemma~\ref{thm:exponential_growth}.

\end{proof}

\subsection{Wrapping up: proof of Claim 1 of Theorem \ref{thm:gen_overthershold}}
\label{ssec:proof:thm:gen_overthershold}

We first prove Theorem \ref{thm:gen_overthershold}, since it is implicated almost directly by the lemmas proved in the previous subsections, namely Lemmas \ref{le:L-visit} and \ref{thm:exponential_growth}. To prove the theorem, we introduce the following algorithm, which is nothing more than a simple combination of  Algorithms \ref{alg:L-visit} and \ref{alg:par_L-visit}, with some simplifications.

\begin{algorithm}
\caption{\textsc{$L$-visit from a set of initiators}}
\small{
\textbf{Input}: A small-world graph $G_{\text{SW}} = (V, E_{\text{SW}})$; a
subgraph $H$ of $G_{\text{SW}}$; a set of initiators $I_0
\subseteq V$.
\begin{algorithmic}[1]
	\State $Q = I_0$
	\State $R = \emptyset$
	\State $D = \emptyset$
	\While{$Q \neq \emptyset$ and $Q < \beta \log n$}\label{line:totvisit:firstwhile}
	\State Perform lines \ref{line:alg:L-visit:while}-\ref{line:alg:L-visit:endwhile} of Algorithm \ref{alg:L-visit} \Comment{Sequential $L$-visit}
%	    \State $w = \dequeue(Q)$
%	    \State $R = R \cup w$ 
 %	            \For {each bridge-neighbor $x$ of $w$ in $H$}
 %	              \If {$x$ is \good for $R \cup Q$}		\Comment{We are 
% 	              using Definition \ref{def:free_node}}
% 	                  \For{each node $y$ in the $L$-truncated local cluster $\LC ^L (x)$} 
% 	                       \State $\enqueue(y,Q)$
% 	                  \EndFor
% 	              \EndIf
% 	            \EndFor
 	 \EndWhile
     \While{$Q \neq \emptyset$}\label{line:totvisit:secondwhile}
	 \State Perform lines \ref{line:parvisit:while}-\ref{line:parvisit:endwhile} of Algorithm \ref{alg:par_L-visit}\Comment{Parallel $L$-visit}
     \EndWhile
\end{algorithmic}}
\label{alg:union_L-visit}
\end{algorithm}

It should be noted that the  algorithm above is essentially a sequence of two
main while loops.  The first loop in line \ref{line:totvisit:firstwhile} corresponds to
Algorithm~\ref{alg:L-visit} and describes a ``bootstrap'' phase of the RF
process, while the second main loop in line \ref{line:totvisit:secondwhile}, describes a second phase
starting with a subset $Q$ of visited nodes of size $\Omega(\log n)$.

The following lemma states the main properties produced by our  analysis of Algorithm \ref{alg:union_L-visit} with  input $(G,G_p,I_0)$. In particular, it claims that, with probability $\Omega(1)$, the first while loop terminates after $\bigO(\log n)$ rounds. Moreover, once the second while loop starts, it is such that, after $\bigO(\log n)$ rounds, there will be at least $\Omega(n)$ visited nodes w.h.p. This result implies that, starting from a single source $s \in V$, the algorithm will visit $\Omega(n)$ nodes with constant probability. On the other hand, starting from a set of sources $I_0$ such that $|I_0|=\Omega(\log n)$, the algorithm will reach $\Omega(n)$ nodes, w.h.p. Hence, Claim 1 of Theorem \ref{thm:gen_overthershold} follows from the following lemma.

\begin{lemma}
\label{thm:bootstrap}
Let $V$ be a set of $n$ nodes and $I_0 \in V$ a set of \textit{initiators}.
For every $\varepsilon>0$, $c>0$ and for every 
probability $p$ such that 
\[
\frac{\sqrt{c^{2} + 6c +1}-c-1}{2c} + \varepsilon \, \leq p \,  \leq \, 1  ,
\] 
there are positive parameters $L,k,\gamma,\beta$ that depend only on $c$ and
$\varepsilon$ such that the following holds. Sample a graph $G = (V,E)$
according to the $\mathcal{SWG}(n,c/n)$ distribution, and let $G_p$ be the
percolation graph of $G$ with parameter $p$. Run    
Algorithm~\ref{alg:union_L-visit} on input $(G,G_p,I_0)$  for sufficiently large  $n$, then:
\begin{enumerate}
    \item The first while loop in line \ref{line:totvisit:firstwhile} terminates at
some round $\tau_1 =\Theta(\log n)$ in which \[\Prc{|R \cup Q| \geq n/k \mbox{
\emph{OR} } |Q| \geq \beta \log n} \geq \gamma;\]
\item Conditioning to the above event, the second while loop in line \ref{line:totvisit:secondwhile} terminates at some round $\tau_2 = \Theta(\log n)$ in which $|Q \cup R| \geq n/k$, , w.h.p.
\end{enumerate} 
\end{lemma}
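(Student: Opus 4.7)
The plan is to obtain Claim~1 by a single application of Lemma~\ref{le:L-visit} to the sequential $L$-visit executed in the first while loop, and to obtain Claim~2 by iterating Lemma~\ref{thm:exponential_growth} over the parallel $L$-visit executed in the second while loop, using the state delivered at the end of Claim~1 as the initial condition of the iteration.

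For Claim~1, I would set $T_1 = \lceil \beta \log n / \varepsilon' \rceil$, where $\beta$ is the constant required by the subsequent application of Lemma~\ref{thm:exponential_growth} and $\varepsilon'$ is the constant provided by Lemma~\ref{le:L-visit}. Applying Lemma~\ref{le:L-visit} to the sequential visit with deleted set $D_0 = \emptyset$ at round $T_1 \geq t_0$ (for $n$ large enough) gives that the event
\[
\{|R_{T_1} \cup Q_{T_1}| \geq n/k\} \cup \{|Q_{T_1}| \geq \varepsilon' T_1\}
\]
holds with probability at least $\gamma$. Since each round of the sequential visit dequeues exactly one vertex, $|R_{T_1}| \leq T_1 = \bigO(\log n)$; hence on the first alternative we still have $|Q_{T_1}| \geq n/k - T_1 \geq \beta \log n$ for $n$ sufficiently large, while on the second alternative $|Q_{T_1}| \geq \varepsilon' T_1 \geq \beta \log n$ by construction. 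In both cases the queue is non-empty at round $T_1$, and so the first loop's termination condition ($|Q| \geq \beta \log n$) is triggered at some $\tau_1 \leq T_1 = \bigO(\log n)$ with $|Q_{\tau_1}| \geq \beta \log n$. The matching lower bound $\tau_1 = \Omega(\log n)$ is inherited from the fact that the expected per-round growth of $|Q|$ in the sequential visit is a bounded constant (the queue plays the role of the Galton--Watson process of Lemma~\ref{le:L-visit}, whose offspring distribution has bounded mean and variance).

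For Claim~2, I would condition on the event delivered by Claim~1, so that the second while loop starts with an initial queue of size at least $\beta \log n$ and an already-visited set $R$ of size at most $\tau_1 = \bigO(\log n) \leq \log^4 n$. Treating this $R$ as the deleted set $D_0$ and $Q_{\tau_1}$ as the initiator set $I_0$, the hypotheses of Lemma~\ref{thm:exponential_growth} are met. I then proceed by induction on the parallel-loop round $t$: as long as $|Q_{t-1}| + |R_{t-1}| \leq n/k$ and $|Q_{t-1}| \geq \beta \log n$, Lemma~\ref{thm:exponential_growth} yields $|Q_t| \geq (1+\delta) |Q_{t-1}|$ with probability at least $1 - 1/n^2$, which simultaneously propagates both invariants. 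Union-bounding the failure probability across $T_2 = \lceil \log(n/(k \beta \log n)) / \log(1+\delta) \rceil = \bigO(\log n)$ consecutive rounds yields, with probability at least $1 - \bigO(\log n / n^2) = 1 - o(1/n)$, geometric growth of $|Q|$ until the threshold $n/k$ is crossed, at which point $|Q \cup R| \geq n/k$ as required. The lower bound $\tau_2 = \Omega(\log n)$ follows from the same observation as in Claim~1: the per-round multiplicative growth is bounded by a constant in expectation, so reaching $n/k$ from a seed of size $\beta \log n$ requires $\Omega(\log n)$ rounds.

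The main obstacle is essentially bookkeeping: one has to map the state variables of the combined Algorithm~\ref{alg:union_L-visit} into the $(I_0, D_0)$ inputs of Lemma~\ref{thm:exponential_growth} at the junction between the two while loops, and verify that the inductive invariants $|Q_{t-1}| \geq \beta \log n$ and $|Q_{t-1}| + |R_{t-1}| \leq n/k$ persist through $\bigO(\log n)$ rounds. Because the technical heart of the analysis is already isolated inside Lemmas~\ref{le:L-visit} and~\ref{thm:exponential_growth}, no further probabilistic work is needed here beyond careful chaining and a union bound.
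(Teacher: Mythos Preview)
Your proposal is correct and follows essentially the same approach as the paper: Claim~1 is obtained by invoking Lemma~\ref{le:L-visit} at the round $t$ for which $\varepsilon' t = \beta\log n$, and Claim~2 by iterating Lemma~\ref{thm:exponential_growth} on the state inherited from the first phase. The paper's own proof is in fact terser than yours (it simply declares each claim a ``direct consequence'' of the respective lemma), so the additional bookkeeping you supply---the case split on which disjunct of Lemma~\ref{le:L-visit} fires, the explicit induction and union bound over $\bigO(\log n)$ parallel rounds, and the justification of the $\Omega(\log n)$ lower bounds---only makes the argument more complete, not different.
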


\begin{proof}
The first claim  is a direct consequence of  Lemma~\ref{le:L-visit}   by setting  $t$ in the latter so that
$\varepsilon' t = \beta\log n$.\footnote{Note that
$t_0$ in the claim of Lemma~\ref{lm:bp.lowerbound} is a constant.}
Then, at the end of  the first while loop, two
cases may arise: i) $|Q \cup R | \geq n/k$ (where $Q$ and $R$ are the snapshot of the two sets at the end of the first while loop), so  $\Theta(n)$
nodes are visited within $\bigO(\log n)$ iterations and Claim $2$ immediately follows; ii) A round $\tau_1 = \Theta(\log n)$ exists in  which  the subset of infectious nodes gets size at least $|Q|
\geq \beta \log n$, where $Q$ is the queue's snapshot at the end of the first
while loop.

In order to complete the proof of Claim 2 of the lemma, it thus suffices to only address case ii) above, which corresponds to the setting in which, at the beginning of the second while loop, the queue $Q$ is initialized with a set of size $\geq \beta \log n$. 
%We remark that the second while loop  corresponds to a ``parallel''  counterpart of Algorithm~\ref{alg:L-visit},\footnote{It is essentially a restricted BFS-like visit of the percolation graph. It is restricted because nodes that are infectious over a  bridge edge can only propagate the infection within distance $L$ using active ring edges.} in which all nodes are removed from the queue at the beginning of each iteration of the second main while loop at line 10.
We can now observe that Claim $2$ is a
direct consequence of Lemma~\ref{thm:exponential_growth}.
\end{proof}

\subsection{Wrapping up: proof of Claim 1 of Theorem \ref{thm:intro-erdos-2}}
\label{ssec:proof:thm:intro-erdos-2}

In the previous subsection we heve essentially proved that, starting from a single source $s \in V$, if we explore the connected component of $s$ in $G_p$, then, with probability $\Omega(1)$, we will visit at least $\Omega(n)$ nodes and, moreover,  such  nodes induce a connected sub-graph of diameter $\bigO(\log n)$.

The goal of this subsection is  to prove  that, w.h.p.,   a set of $\Omega(n)$ nodes in $V$ exists that induces in $G_p$ a connected component with diameter $\bigO(\log n)$. To this aim, we introduce the following algorithm that can be informally seen as  several ``attempts'' of bootstraps, according to  Algorithm \ref{alg:L-visit}, each one performed   from a different source node $s$,   then followed by   a \textsc{parallel $L$-visit} according to  Algorithm \ref{alg:par_L-visit}.

\begin{algorithm}
\caption{\textsc{Search of the giant component}}
\small{
\textbf{Input}: A small-world graph $G_{\text{SW}} = (V, E_{\text{SW}})$; a
subgraph $H$ of $G_{\text{SW}}$; two integers $\beta$, $\beta'$.
\begin{algorithmic}[1]
\State $Q = \emptyset$
\State $D = \emptyset$
\While{$|Q| \leq \beta \log n$ or $|Q \cup R | \leq n/k$}\label{line:bootstrap:while} \Comment{\textsc{First phase}}
	\State Let $s \in V \setminus D$
	\State $Q = \{s\}$
	\State $R = \emptyset$
	\For{$i=1,\dots,\beta'\log n$}
	    \State Perform lines \ref{line:alg:L-visit:while}-\ref{line:alg:L-visit:endwhile} of Algorithm \ref{alg:L-visit}\Comment{Sequential $L$-visit}\label{line:bootstrap:endfor}
 	 \EndFor
 	 \State $D = D \cup R$
\EndWhile
\If{$|Q \cup R| \leq n/k$}\Comment{\textsc{Second phase}}
\State Perform Algorithm \ref{alg:par_L-visit} with input $G_{SW}$, $H$, $I_0=Q$, $D_0=R\cup D$ \label{line:bootstrap:parvisit} \Comment{Parallel $L$-visit}
\EndIf
\end{algorithmic}}
\label{alg:bootstrap}
\end{algorithm}

More in detail, the  algorithm above works in two   phases. The first phase starts in the while loop in line \ref{line:bootstrap:while} and  performs   different ``bootstraps''. In this phase, we essentially look for a source node $s \in V$ such that the queue $Q$ of the visit of the component of $s$ in $G_p$ gets $\Omega(\log n)$ nodes, after $\Theta(n)$ steps of the visit. In particular, at each iteration of the while loop, a node $s \in V$ is chosen arbitrarily and, starting from it, $\Theta(\log n)$ steps of the sequential $L$-visit of Algorithm \ref{alg:L-visit} are performed: if, at this point, the set $Q$ has $\Omega(\log n)$ nodes, the first phase is successfully completed, otherwise it starts again from another source node. The second phase of the algorithm starts in line \ref{line:bootstrap:parvisit} with a subset $Q$ of $\Omega(\log n)$ visited nodes, and consists in the parallel visit in Algorithm \ref{alg:par_L-visit} setting $Q$ as the   source subset.

The following lemma essentially states that, w.h.p.: i) within 
$\tau_1 = \bigO(\log n)$  bootstrap attempts, 
the first phase ends successfully,  and, then, ii) the second phase will discover $\Omega(n)$ nodes within further  $\tau_2=\bigO(\log n)$ iterations.

\iffalse 
(1) contains the analysis of Algorithm \ref{alg:bootstrap} with in input $(G,G_p,\beta,\beta')$, where $\beta$ and $\beta'$ are two positive constants. It claims that the while loop in line \ref{line:bootstrap:while} lasts for $\tau_1 = \bigO(\log n)$ rounds w.h.p., and the second phase for other $\tau_2=\bigO(\log n)$ rounds w.h.p. In detail, we have that, in the first phase, the algorithm, in $\bigO(\log n)$ iterations of the while loop, find a node $s \in V$ such that in the sequential $L$-visit starting from $s$, after $\Theta(\log n)$ iterations, $Q$ has size $\Omega(\log n)$ w.h.p. In the second phase, the parallel $L$-visit starting from such a $Q$ reach  w.h.p.

\fi

\begin{lemma}
Let $V$ be a set of $n$ nodes. For every $\varepsilon>0$, $c>0$ and $\beta>0$, and for every probability $p$ such that
\begin{equation*}
    \frac{\sqrt{c^2+6c+1}-c-1}{2c}+\varepsilon \leq p \leq 1,
\end{equation*}
there are positive parameters $L$, $k$ (that depends only on $c$ and $\varepsilon$) and a constant $\beta'$ (that depends on $c$, $\varepsilon$ and $\beta$) such that the following holds. Sample a graph $G=(V,E)$ according to the $\mathcal{SWG}(n,c/n)$ distribution and let $G_p$ be the percolation graph of $G$ with parameter $p$. Run  Algorithm \ref{alg:bootstrap} on input $(G,G_p,\beta, \beta')$ with any  sufficiently large $n$, then:   
\begin{enumerate}
    \item The first while loop in line \ref{line:bootstrap:while} terminates at some round $\tau_1= \bigO(\log n)$, w.h.p.;
    \item Conditioning to the above event, the second phase of the algorithm starting in line \ref{line:bootstrap:parvisit} terminates at some round $\tau_2=\Theta(\log n)$ in which $|Q \cup R| \geq n/k$, w.h.p.
\end{enumerate}
%where the probability is over both the randomness of the choice of $G$ from $\mathcal{SWG}(n,c/n)$ and over the choice of the percolation graph $G_p$. 
\label{lem:bootstrap}
\end{lemma}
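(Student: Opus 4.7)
My plan is to treat Phase~1 of Algorithm~\ref{alg:bootstrap} as a sequence of largely independent bootstrap attempts, each of which succeeds with constant probability via Lemma~\ref{le:L-visit}, and Phase~2 as a direct iteration of Lemma~\ref{thm:exponential_growth}. The two phases are analyzed separately and then combined by a union bound on a polynomial number of bad events.

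\textbf{Phase~1.} I would set $\beta' = \lceil \beta/\varepsilon' \rceil$, where $\varepsilon'$ and $\gamma$ are the constants produced by Lemma~\ref{le:L-visit}, and invoke that lemma with the current set $D$ of deleted nodes, any source $s\in V\setminus D$, and $t = \beta' \log n \geqslant t_0$. The lemma guarantees that, provided $|D|\leqslant \log^{4}n$, a single attempt produces either $|R\cup Q|\geqslant n/k$ or $|Q|\geqslant \varepsilon' t = \beta \log n$ with probability at least $\gamma>0$. To control the growth of $D$, I would observe that in each inner iteration the dequeued node has at most $\mathrm{Bin}(n, pc/n)$ bridge-neighbours, and each such neighbour contributes at most $2L+1$ nodes to the $L$-truncated local cluster. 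A Chernoff bound on the sum over $\beta'\log n$ iterations then shows that every single attempt visits at most $\bigO(\log^{2} n)$ nodes w.h.p. Since only $\bigO(\log n)$ attempts will be needed (argued below), we have $|D|\leqslant \bigO(\log^{3} n)\ll \log^{4} n$ throughout Phase~1, so the hypothesis of Lemma~\ref{le:L-visit} keeps being satisfied.

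\textbf{Geometric tail bound on $\tau_1$.} Conditioning on the history of past failed attempts, the edges of $G_p$ that have been fully explored so far are exactly those incident to $D$. By the principle of deferred decisions, the status of all other edges is still independent of that history, so a new attempt from a source $s\in V\setminus D$ still satisfies the hypotheses of Lemma~\ref{le:L-visit} (with initial deleted set $D_0 = D$) and hence still succeeds with probability at least $\gamma$. The number of attempts until the first success is therefore stochastically dominated by a geometric random variable of parameter $\gamma$; a standard tail bound gives $\tau_1\leqslant (2/\gamma)\log n$ with probability at least $1-n^{-\Omega(1)}$, and a union bound with the event ``$|D|\leqslant \log^{4} n$ throughout'' completes the proof of Claim~1.

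\textbf{Phase~2 and the main obstacle.} For Claim~2 I condition on $|Q|\geqslant \beta\log n$ and $|Q\cup R|\leqslant n/k$ at the end of Phase~1, and iterate Lemma~\ref{thm:exponential_growth}: as long as $|Q_{t-1}\cup R_{t-1}|\leqslant n/k$, we have $|Q_t|\geqslant (1+\delta)|Q_{t-1}|$ with probability at least $1-n^{-2}$. Hence, after at most $\tau_2 = \lceil \log_{1+\delta}(n/(k\beta\log n))\rceil = \bigO(\log n)$ iterations, we either reach $|Q|\geqslant n/k$ directly, or we cross the threshold $|Q\cup R|\geqslant n/k$ earlier; a union bound over the $\tau_2$ iterations, each failing with probability at most $n^{-2}$, yields the high-probability guarantee. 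The main obstacle is the deferred-decisions argument in Phase~1: one must verify that the random bits consumed by failed attempts do not bias the distribution of the unexplored part of the graph, so that Lemma~\ref{le:L-visit}'s hypotheses (which are stated for a fresh sample of $\mathcal{SWG}(n,c/n)$) really do apply to each new attempt. This is handled by noting that the sequential $L$-visit only inspects edges incident to the running set $D\cup R\cup Q$, and by using the bound $|D|=\bigO(\log^{3}n)=o(n)$ to conclude that, conditional on the explored information, the residual graph on $V\setminus D$ is still distributed as an $\mathcal{SWG}(|V\setminus D|,c/n)$ graph up to a negligible perturbation that does not affect the constants in Lemma~\ref{le:L-visit}.
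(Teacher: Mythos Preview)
Your proposal is correct and follows essentially the same approach as the paper: set $\beta'=\beta/\varepsilon'$, bound the growth of $D$ per attempt by $\bigO(\log^2 n)$ so that $|D|=\bigO(\log^3 n)\leq \log^4 n$ throughout, apply Lemma~\ref{le:L-visit} to each attempt to get success probability $\geq\gamma$, use the geometric tail bound $(1-\gamma)^{\tau}\leq 1/n$ for Claim~1, and invoke Lemma~\ref{thm:exponential_growth} iteratively for Claim~2. The ``main obstacle'' you flag is in fact exactly why Lemma~\ref{le:L-visit} was stated with an input set $D_0$ of deleted nodes satisfying $|D_0|\leq\log^4 n$: the deferred-decisions argument inside that lemma already accounts for edges incident to $D_0$ having been revealed, so no separate ``residual graph is still $\mathcal{SWG}$'' argument is needed.
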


\begin{proof}
As for the first claim, let $\gamma>0$ and $\varepsilon'$ be the constants  in Lemma \ref{le:L-visit}, and   define $\beta'=\beta/\varepsilon'$. Moreover, let $\tau= \log_{1-\gamma}(n)$. First, we notice that, at each iteration of the while  in line \ref{line:bootstrap:while}, the set $D$ grows w.h.p. of at most  $8L \beta'\log^2 n$ size: indeed,   each node in $G_p$ has degree of at most $4 \log n$ with probability at least $1-1/n^2$ (this fact follows from a standard application of Chernoff's bound)
and so, within $\beta' \log n$ iterations of the for loop, at most $8L \beta'\log^2 n$ nodes will be reached by $s$. This implies that, at each iteration $j \leq \tau$ of the while loop, $D$ has size  $|D| =\bigO(\log^3 n)$.

Then, we claim that, at each $j$-th iteration of the while loop in line \ref{line:bootstrap:while}, if $j \leq \tau$, there is  probability at least $ \gamma>0$ that the process terminates. Indeed, thanks to  Lemma \ref{le:L-visit}, there exists a constant $\gamma>0$ such that, at the end of the for loop in line \ref{line:bootstrap:endfor}, \[\Prc{|Q|\geq \varepsilon' \beta' \log n \text{ or } |R \cup Q| \geq n/k}\geq \gamma.\] Therefore, the probability that the process continues after $\tau$ iterations is at most 
\[(1-\gamma)^\tau \leq \frac{1}{n}.\]

 Claim $2$  is instead a
direct consequence of Lemma~\ref{thm:exponential_growth}.
%So, we prove the existence w.h.p. of a node $v$ such that the sequential-BFS starting from $v$, after $\bigO(\log n)$ steps, is in one of the two cases i) $|Q| \geq \beta \log n$ ii) $|Q \cup R | \geq n/k$.
%If ii) happens, we prove the lemma. Indeed, w.h.p. we have the existence of a node $v$ such that there is a set of $\Omega(n)$ nodes at distance at most $\bigO(\log n)$ from $v$.
%If i) happens, it suffices to perform a sequential-BFS (Algorithm \ref{alg:sequential-BFS-visit}) with in input $G_p$, $I_0 = Q$ and $R_0$ and apply Lemma \ref{lem:old_erdos_phase2} to claim that such BFS reaches at least $\Omega(n)$ nodes in $\bigO(\log n)$ steps. So, also in this case we prove that there is a node $v$ such that there are at least $\Omega(n)$ nodes at distance $\bigO(\log n)$ from it, w.h.p. 
\end{proof}

\section{The \texorpdfstring{$\SWGm(n,q)$} \,  Model below the Threshold}\label{sec:belo}
The goal of this section is to prove the second claims  of
Theorems \ref{thm:intro-erdos-2} and \ref{thm:gen_overthershold}. Informally, in the following we show that,
whenever $p< \frac{\sqrt{c^{2} + 6c +1}-c-1}{2c}$, w.h.p., the percolation graph $G_p$ of
a graph $G=(V,E)$ sampled from \SWG is such that every connected component has $\bigO(\log n)$ nodes.  We state here the claim which is proved in
Subsection~\ref{ssec:proofbelowth}.

\begin{lemma} \label{thm:below_th}
Let $V$ be a set of $n$ nodes. For every $\varepsilon>0$, $c>0$ and for every
contagion probability $p$ such that
\[
0 \, \leq \, p \, \leq \, \frac{\sqrt{c^{2} + 6c +1}-c-1}{2c} - \varepsilon
\]
there is a positive constant $\beta$ that depends only on $c$ and $\varepsilon$
such that the following holds. Sample a graph $G=(V,E)$ according to the
$\mathcal{SWG}(n,c/n)$ distribution, and let $G_p$ be the percolation graph of
$G$ with parameter $p$. If $n$ is sufficiently large, with probability at least
$1 - 1/n$ with respect to the randomness of $G$ and the randomness of $G_p$,
$G_p$ contains no connected component of size exceeding $\beta \log n$.
\end{lemma}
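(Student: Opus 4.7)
The plan is to mimic the BFS-with-bridges-then-local-cluster exploration of Algorithm~\ref{alg:L-visit}, but this time to use it as an \emph{upper} bound on component sizes. The key observation is that for the upper bound we may safely allow "double counting" and may drop both the $L$-truncation and the \good-node filter, since any collision between a newly-revealed local cluster and previously visited nodes only shortens the cluster. Fix an arbitrary source $v$ and explore its component as follows: while the queue is nonempty, dequeue a node $w$; reveal the status in $G_p$ of all so-far-unexposed bridge edges incident to $w$; for each bridge neighbor $x$ discovered this way, reveal all so-far-unexposed ring edges accessible from $x$ in $G_p$, and enqueue the newly-discovered vertices. By the principle of deferred decisions, at each step the number of freshly-enqueued nodes is stochastically dominated by
\[
W \;=\; \sum_{i=1}^{B} L_i,
\]
where $B \sim \mathrm{Bin}(n, p c/n)$ is independent of an i.i.d.\ sequence $L_1, L_2, \dots$ of ``fresh-ring'' local cluster sizes under percolation probability~$p$.

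By Wald's identity (Lemma~\ref{lem:Wald-equation}) and Fact~\ref{fa:exp_loc_ub},
\[
\mathbf{E}[W] \;\leq\; p c \cdot \frac{1+p}{1-p}.
\]
The threshold $\frac{\sqrt{c^2+6c+1}-c-1}{2c}$ is precisely the positive root of the quadratic $cp^2+(c+1)p-1=0$, equivalently of $pc(1+p)/(1-p)=1$, so the hypothesis $p \leq \frac{\sqrt{c^2+6c+1}-c-1}{2c} - \varepsilon$ yields $\mathbf{E}[W] \leq 1 - 2\eta$ for some $\eta = \eta(c,\varepsilon) > 0$. Moreover $W$ has a finite exponential moment in a neighborhood of $0$: $B$ is dominated by a Poisson-like variable with constant mean $pc$, and each $L_i-1$ is a sum of two independent geometric$(1-p)$ random variables, so $\mathbf{E}[e^{\theta L_1}] < \infty$ for all $\theta < \log(1/p)$, and a standard compounding argument shows that $\varphi(\theta) := \mathbf{E}[e^{\theta W}]$ is finite and smooth near $0$ with $\varphi(0)=1$, $\varphi'(0) = \mathbf{E}[W] \leq 1-2\eta$. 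Hence there exists $\theta^\star > 0$, depending only on $c,\varepsilon$, such that $\mathbf{E}[e^{\theta^\star(W-1)}] \leq e^{-\eta \theta^\star}$.

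Next, couple the exploration with the Galton--Watson process $\{B_t\}_{t \geq 0}$ of Definition~\ref{def:GW} whose offspring law is $W$: by Lemma~\ref{lem:stochastic_domination_coupling} the size of the component of $v$ in $G_p$ is stochastically dominated by the total progeny $T = \min\{t : B_t = 0\}$ of this subcritical branching process. Standard random-walk/Chernoff analysis of the shifted sum $\sum_{t=1}^{k}(W_t - 1)$ (which must equal $-1$ on the event $\{T = k\}$, and in particular must be $\leq -1$ on $\{T \geq k\}$) gives
\[
\Pr[T \geq k] \;\leq\; e^{\theta^\star}\,\bigl(\mathbf{E}[e^{\theta^\star(W-1)}]\bigr)^{k-1} \;\leq\; e^{\theta^\star}\, e^{-\eta \theta^\star (k-1)}.
\]
Choosing $\beta = \beta(c,\varepsilon)$ so that $\eta \theta^\star \beta \geq 2$ yields $\Pr\bigl[\,|\text{component of }v| \geq \beta \log n\,\bigr] \leq n^{-2}$ for sufficiently large $n$, and a union bound over the $n$ choices of starting vertex $v$ completes the proof.

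The only delicate point is the stochastic domination claim: one must verify that, conditionally on the history of the exploration (including the identities of bridge endpoints revealed so far), the number of nodes added when $w$ is dequeued is dominated by an \emph{independent} fresh copy of $W$. This is straightforward for the bridge count, since the edges incident to $w$ that have not yet been inspected form an independent $\mathrm{Bin}(\cdot, pc/n)$ family under deferred decisions; and for the local clusters one uses that the ring edges lying beyond the already-explored portion of the cycle are still independent, so each discovered $x$ grows a local cluster whose size is dominated by a fresh $L_i$. I expect the minor bookkeeping of ensuring that already-explored ring segments never \emph{inflate} (only truncate) a local cluster to be the only subtlety; the rest is the classical tail bound for subcritical Galton--Watson total progeny.
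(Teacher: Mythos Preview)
Your approach is essentially identical to the paper's: dominate the bridge-then-local-cluster BFS by a subcritical Galton--Watson process (the paper writes the offspring law as $Y+\sum_{j=1}^{2Y}L_j$ with $Y\sim\Bin(n,pc/n)$ and $L_j$ one-sided geometric, which is exactly your $\sum_{i=1}^{B}L_i$), then bound the total progeny by an exponential-moment/Chernoff argument and finish with a union bound over sources---the only cosmetic difference is that the paper computes the tail bound explicitly via the negative-binomial/binomial identity, whereas you invoke finiteness of the MGF near zero. One small slip to fix: on the event $\{T\ge k\}$ the partial sum $\sum_{t=1}^{k}(W_t-1)$ is $\ge -1$ (not $\le -1$), which is precisely the inequality your displayed Markov bound needs, so the conclusion stands but the parenthetical has the sign reversed.
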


Interestingly enough, thanks to the equivalence between the SIR process and the percolation process, the above lemma also implies Claim 2 of Theorem \ref{thm:gen_overthershold}, 
since the \SIR process infects at least one new  node in each round unless it
has died out, the above result also implies that, w.h.p., the \SIR process dies
out within $\beta\log n$ rounds, infecting at most $|I_0| \cdot \beta\log n$
new nodes.

\subsection{Proof of Lemma~\ref{thm:below_th}} \label{ssec:proofbelowth}
In order to prove upper bounds on the number of nodes in a connected component, we might
proceed as with did to prove lower bounds. We proceed as follows: i) we run a BFS in $G_p$, i.e. we run
Algorithm~\ref{alg:upper_bound} with input $G_{SW} = G$, $H =G_p$ and an
arbitrary source $s \in V$,  and we define a (sequential) Galton-Watson
branching process that stochastically dominates\footnote{More precisely in
Lemma~\ref{le:ub_dom} we use Definition~\ref{def:stochastic_domination} in the
appendix and notice that a simple coupling argument (see
Definition~\ref{def:coupli} and
Lemma~\ref{lem:stochastic_domination_coupling}.) applies between the two
processes.} the BFS process with respect to the overall size of the set of
nodes visited upon termination; ii) thanks to step (i), we can prove that for
each source $s \in V$ and every $t > \beta \log n$, the BFS in
Algorithm~\ref{alg:upper_bound} with input $(G,G_p,s)$  terminates within $t$
iterations of the while loop after visiting less than $t$ nodes, w.h.p.

\begin{algorithm}[H]
\caption{\textsc{BFS visit}}
\small{
\textbf{Input}: A small-world graph $G_{\text{SW}} = (V, E_{\text{SW}})$ and a
subgraph $H$ of $G_{\text{SW}}$; a source $s \in V$.

    \begin{algorithmic}[1]
		\State $Q = \{s\}$
		\While{$Q \neq \emptyset $}
			\State $w = \dequeue(Q)$
			\State $\texttt{visited}(w) = \True$ 
					\For {each bridge neighbor $x$ of $w$ in $H$  such that $\texttt{visited}(x) = \False$}
					  \For {each $y$ in the local cluster  $\LC(x)$ such that $\texttt{visited}(y)=\False$}
					  \State $\enqueue(y,Q)$
					  \EndFor
					\EndFor
		 \EndWhile
	\end{algorithmic}}
	\label{alg:upper_bound}
\end{algorithm}
 
It should be noted that i) Algorithm~\ref{alg:upper_bound} visits the connected
component containing $s$; ii) the number of
nodes visited by the algorithm is exactly equal to the number of iterations of
the main while loop before $Q$ becomes empty.  To formalize our approach we
need to define the following random subsets of nodes.

\begin{definition}
For each $t \geq 1$, let $Q_t$ be the set $Q$ of the BFS in
Algorithm~\ref{alg:upper_bound} at the end of the $t$-th iteration of the
while loop.  Let $R_t=\cup_{i=1}^{t-1}Q_i$, and let $S_t=V \setminus (R_t \cup
Q_t)$. We also define $cc(s)=\max\{t: Q_t\ne\emptyset\}$ as the overall number
nodes visited at the end of   the  execution of Algorithm~\ref{alg:upper_bound}
with input $G_{SW} = G$, $H=G_p$, and  an arbitrary source $s \in V$.
\label{def:visitupperbound}
\end{definition}

We now consider the ``sequential'' Galton-Watson Branching process
$\{B_t\}_{t\geq 0}$ (see Definition~\ref{def:branchingprocess}) determined by
the  random variables $\{W_t\}_{t\geq 0}$,  where  the $W_t$'s are {\em
independent} copies of the following random variable $W$:
 
\begin{definition}\label{def:zeta}
$W$ is generated as follows: i) we first randomly sample an integer $Y$ from
the distribution $\Bin(n,pc/n)$; ii) $W = Y + \sum_{j=1}^{2Y}L_j$, where each
$L_j$ is a variable that counts the number of successes in a sequence of
independent Bernoulli trials with success probability $p$, till the first
failure. 
\end{definition}

It should be noted that $\sum_{j=1}^{2x}L_j$ is the overall number of successes
in a sequence of $2x$ Bernoulli trials with parameter $p$, until we observe
exactly $2x$ failures. As such, $\sum_{j=1}^{2x}L_j$ follows a {\em negative
binomial distribution}.  The next lemma shows that the above Galton-Watson
process $\{B_t\}_{t\geq 0}$ dominates the process $\{Q_t\}_t$.

\begin{fact}\label{le:ub_dom} 
For every $x\ge 0$, the following holds, for every $t$:
\[
	\Prc{B_t\ge x}\ge\Prc{|Q_t|\geq x},
\]
where the left side of the inequality is taken over the randomness of the
Galton-Watson process $\{B_t\}_{t\geq 0}$, while the right side is taken on the
outcome of Algorithm~\ref{alg:upper_bound} with input $G_{SW}=G$ and $H=G_p$,
with respect to the  randomness of the initial graph $G$ sampled from \SWG and
that of its percolation $G_p$.
\end{fact}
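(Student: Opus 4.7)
The plan is to reduce the claim to a per-iteration stochastic dominance: conditioned on the entire history of the BFS up to the $t$-th iteration, the number $Z_t$ of nodes newly enqueued when $w$ is dequeued is stochastically dominated by a fresh, independent copy of the random variable $W$ of Definition~\ref{def:zeta}. Once this is established, I would couple $\{|Q_t|\}_t$ with $\{B_t\}_t$ by taking $W_t$ equal to this upper bound at each $t$: since both recursions take the form $X_t=X_{t-1}+(\text{new})-1$ as long as the queue is nonempty (and the BFS stops once the queue is empty, which can only preserve $|Q_t|\le B_t$ thereafter), an induction on $t$ yields $|Q_t|\le B_t$ pointwise in the coupling, hence the claimed stochastic dominance via Lemma~\ref{lem:stochastic_domination_coupling}.

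The core step uses the principle of deferred decisions. I would jointly view the randomness of $G$ and of the percolation as follows: for every unordered pair $\{u,v\}$ the event ``$\{u,v\}\in E_p$ via a bridge'' holds, independently across pairs, with probability $pc/n$ (the probability $c/n$ that the pair lies in $E_2$ times the percolation probability $p$), and for every cycle edge the percolation outcome is an independent $\mathrm{Ber}(p)$ trial. When $w$ is dequeued, some of these Bernoullis have been observed in earlier iterations, but the unobserved ones are still i.i.d.\ with the same marginals. By inspection of Algorithm~\ref{alg:upper_bound}, $Z_t$ is the size of the union over unvisited bridge neighbors $x$ of $w$ in $G_p$ of the unvisited parts of their local clusters $\LC(x)$.

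I would then produce the upper bound by dropping every ``unvisited'' restriction and all previously observed negative information. Counting bridge neighbors of $w$ over all $n$ potential endpoints, rather than just the unvisited ones, and allowing the trivial slack for endpoints coinciding with cycle neighbors of $w$, gives a count $Y$ with $Y\preccurlyeq\Bin(n,pc/n)$. For each such endpoint $x$ I would bound its contribution by the full (not unvisited-restricted) local cluster, which is $\{x\}$ together with two maximal runs of consecutively percolated cycle edges to the left and right; each run has the geometric law of $L_j$ from Definition~\ref{def:zeta}, and by deferred decisions across both the bridge trials and the cycle trials these $2Y$ runs are mutually independent and independent of $Y$. This yields $Z_t\le Y+\sum_{j=1}^{2Y}L_j\preccurlyeq W$.

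The main obstacle is to convince oneself that these overcounts are genuinely lossless in the stochastic sense despite the fact that the history may have already exposed some bridges from $w$ (as failures, or as successes to already-visited endpoints that the algorithm discards) and some cycle edges adjacent to the boundary of a previously visited local cluster (necessarily as failures, since those are what terminate a cluster). In either case, the conditioning can only shorten a run or remove a candidate, so it only reduces $Z_t$ relative to the unconditional model just described. Replacing the conditional law of $Z_t$ by the dominating unconditional $W$ is therefore valid, and feeding this per-step coupling into the induction of the first paragraph completes the argument.
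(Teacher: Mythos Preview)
Your proposal is correct and follows essentially the same route as the paper's own proof: bound the number of percolated bridge neighbors of the dequeued node by $\Bin(n,pc/n)$ via deferred decisions, bound each resulting local cluster by the infinite-path version (one plus two independent geometric runs), and feed the resulting per-step dominance $Z_t\preccurlyeq W$ into a coupling that yields $|Q_t|\le B_t$. Your write-up is in fact more explicit than the paper's sketch, particularly in spelling out the induction on $t$ and in arguing why previously exposed bridge and cycle edges can only decrease $Z_t$; the one minor inaccuracy is that in Algorithm~\ref{alg:upper_bound} bridge edges from $w$ are never actually revealed as failures (only successes to already-visited endpoints can have been seen), but this does not affect your argument.
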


\begin{proof}
The argument is based on the following observations. Consider the process
generated by Algorithm~\ref{alg:upper_bound} in
Definition~\ref{def:visitupperbound} with input $G_{SW}=G$, $H=G_p$ and any
fixed source $s \in V$  and consider a given state at iteration $t$. Consider
the node $w$ dequeued from $Q$ at the beginning of iteration $t$ of
Algorithm~\ref{alg:upper_bound}, and consider the set of nodes it can possibly
add to the queue at iteration $t$.  In the best possible case, i) $w$ has some
number $\hat{Y}$ of bridge edges in $E_p$ (the bridge edges considered in line
6 of the algorithm), and ii) a local cluster consisting of some $L$ nodes will
be infected starting at each of them. $\hat{Y}$ is distributed as
$\text{Bin}(|S_t|,pc/n)$ and is thus dominated by any variable distributed as
$\text{Bin}(n,pc/n)$. As for $L$, it is certainly dominated by the sum of
$\hat{Y}$ variables, each counting the total number of activations in the local
cluster created by a single node of an infinite path topology (see
Fact~\ref{fa:exp_loc_ub}).  But the above considerations imply that the
variable counting the number of nodes that $w$ can possibly infect is dominated
by a variable distributed like the $W_t$'s.
\end{proof}

Thanks to the above fact, to get an upper bound on the overall number of nodes
the \SIR process can infect under the hypotheses of Lemma~\ref{thm:below_th},
we can analyse the Galton-Watson process specified in
Definition~\ref{def:zeta}.
 
\begin{lemma}\label{le:ch_geom}
Consider the   Galton-Watson process $\{B_t\}_{t\geq 0}$ with $W_1, W_2,\ldots$ 
as in Definition \ref{def:zeta}. For any $t>0$, 
\begin{align}
\label{eq:le:ch_geom}
	&\Prc{\sum_{i=1}^t W_i \ge (1 + 2\delta) \frac{p(1+p)}{1-p}ct}\le 2e^{-\frac{\delta^2 p^2t}{9}}.
\end{align}
\end{lemma}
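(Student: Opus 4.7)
The plan is to exploit the compound structure $W = Y + \sum_{j=1}^{2Y} L_j$ given in Definition~\ref{def:zeta}. I would couple all the geometrics onto a single probability space as an infinite i.i.d.\ sequence $L_1, L_2, \ldots$ and decompose
\[
\sum_{i=1}^t W_i \,=\, S + T, \qquad S := \sum_{i=1}^t Y_i \sim \Bin(nt, pc/n), \qquad T := \sum_{k=1}^{2S} L_k.
\]
Elementary computations give $\Expcc{S} = pct =: \mu_Y$ and $\Expcc{T} = 2\mu_Y\cdot p/(1-p)$, so that $\Expcc{S + T} = pct(1+p)/(1-p) = \mu t$, consistent with the multiplicative threshold $(1+2\delta)\mu t$ in the statement.

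The second step is a union bound that splits the tail event at the level $B := (1+\delta)\mu_Y$ of $S$:
\[
\Prc{\textstyle\sum_i W_i \,\ge\, (1+2\delta)\mu t} \;\leq\; \Prc{S \ge B} \;+\; \Prc{S + T \ge (1+2\delta)\mu t,\, S < B}.
\]
The first term is handled by the standard multiplicative Chernoff bound for the binomial $S$, yielding $\Prc{S \ge B} \le \exp(-\delta^2 pct/3)$. For the second term, on the event $\{S < B\}$ the coupling gives the pointwise domination $T \le T' := \sum_{k=1}^{\lceil 2B\rceil} L_k$, a sum of a \emph{fixed} number of i.i.d.\ geometrics. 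A short algebraic identity using $\mu = pc(1+p)/(1-p)$ shows
\[
(1+2\delta)\mu t - B \,=\, \Expcc{T'} + \delta\mu t,
\]
so it suffices to bound the upper tail $\Prc{T' \ge \Expcc{T'} + \delta\mu t}$.

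For the last step I would use the classical bijection between sums of geometric random variables and negative binomials: $T' \ge x$ iff $\Bin(\lceil 2B\rceil + x, p) \ge x$. Translating $x = \Expcc{T'} + \delta\mu t$ through this duality, the task reduces to an upper tail of a binomial with $N = \bigO(\mu t)$ trials and a required deviation of order $\delta pct(1+p)$ above its mean. Applying either Hoeffding's inequality (Theorem~\ref{thm:hoeff}) or the multiplicative Chernoff bound to this binomial yields an exponential tail of the form $\exp(-\Omega(\delta^2 p^2 t))$, and the split parameter $B$ and the Chernoff constants can be chosen so that the exponent attains the $\delta^2 p^2 t/9$ form in the statement. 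The factor of $2$ in~\eqref{eq:le:ch_geom} then arises from the two contributions of the union bound.

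The main obstacle is this last-step bookkeeping: the deviation $\delta\mu t$, re-expressed on the binomial scale, involves $p$, $c$, and $\delta$ in an intertwined way, and obtaining a clean $\delta^2 p^2 t/9$ exponent uniformly across the subcritical regime requires picking $B$ and matching constants in both Chernoff bounds so that each contribution individually admits a $\exp(-\delta^2 p^2 t/9)$ bound. Once these calculations are carried out, the remainder of the argument is routine Chernoff-type machinery for a binomial and a sum of geometrics.
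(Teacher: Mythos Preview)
Your proposal is correct and follows essentially the same route as the paper: the same decomposition $\sum_i W_i = Y + \sum_{j=1}^{2Y} L_j$ with $Y\sim\Bin(nt,pc/n)$, the same union-bound split at the level $(1+\delta)pct$ for $Y$, a Chernoff bound on $Y$, and then the negative-binomial/binomial duality to reduce the geometric tail to a binomial lower-tail Chernoff bound. The only cosmetic difference is that the paper thresholds the geometric part directly at $(1+2\delta)\tfrac{2p^2}{1-p}ct$, whereas you first pass to the fixed-length sum $T'$ and use the identity $(1+2\delta)\mu t - B = \Expcc{T'} + \delta\mu t$; both lead to the same binomial inequality and the same exponential rate (the paper's computation in fact yields $e^{-\delta^2 p^3 c t/9}$, so your caution about the final constants is well placed).
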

\begin{proof}
We begin by observing that, the definition of the $W_i$'s and Definition
\ref{def:zeta} imply, for every $i \geq 0$, 
\begin{equation*}
	W_i = Y_i + \sum_{s=1}^{2Y_i}L_{i,s} \, .
\end{equation*}
Here, $Y_i$ is distributed as $\text{Bin}(n,pc/n)$, while each $L_{i,s}$ is an
independent copy of a variable that counts the number of successes until the
first failure in a sequence of independent Bernoulli trials with success
probability $p$.\footnote{It can be equivalently regarded as a geometric random
variable with success probability $1-p$.} Next, if we set $W=\sum_{i=1}^t W_i$,
we have
\begin{equation}\label{eq:zeta_1}
	W = \sum_{i=1}^t W_i = \sum_{i=1}^t(Y_i + \sum_{s=1}^{2Y_i}L_{i,s}) \, .
\end{equation}
An obvious remark is that all the $L_{i,s}$ are just independent copies of a
random variable with identical distribution. As a consequence, if we set $Y =
\sum_{i=1}^tY_i$, we can rewrite \eqref{eq:zeta_1} as 
\begin{equation}\label{eq:zeta_2}
	W = \sum_{i=1}^t W_i = Y + \sum_{j=1}^{2Y}L_j,
\end{equation}
where the $L_j$'s are independent random variables distributed like the
$L_{i,s}$'s. Hence:
\begin{equation}
	\Prc{\sum_{i=1}^t W_i\geq (1+2\delta)\frac{p(1+p)}{(1-p)}ct } 
    = \Prc{Y + \sum_{j=1}^{2Y}L_j\ge (1+2\delta)\frac{p(1+p)}{(1-p)}ct}.
	\label{eq:equivalence_W_YL}
	\end{equation}
In order to bound the right hand side of the equation above we proceed in two
steps. First, we prove a concentration result on $Y$. This is easy, since it is
$Y = \sum_{i=1}^t Y_i$, with each $Y_i$ being an (independent) binomial
variable with distribution $\text{Bin}(n,pc/n)$. Each $Y_i$ is in turn the sum
of $n$ independent Bernoulli variables, each with parameter $pc/n$. Overall,
$Y$ is just the sum of $nt$ independent Bernoulli variables with parameter
$pc/n$. Hence, $\Expcc{Y} = pct$. Moreover, a straightforward application of
Chernoff bound yields, for every $0 < \delta < 1$,
\begin{equation}\label{eq:ch_y}
\Prc{Y \geq (1+ \delta) pct}\le e^{-\frac{\delta^2}{3}pct} \, .
\end{equation}
We next argue about $\sum_{j=1}^{2Y}L_j$. We further have, for $0 < \delta <
1$, 
\begin{align}
	& \Prc{\sum_{j=1}^{2Y}L_j > (1 + 2\delta)\frac{2p^2}{1-p}ct \mid Y \leq (1+\delta)pct} \leq  \Prc{\sum_{j=1}^{2(1+\delta)pct}L_j > (1 + 2\delta)\frac{2p^2}{1-p}ct }\, ,
	\label{eq:sum_1}
\end{align}
where we dropped the conditioning on $\{Y \leq (1+\delta)pct\}$, since this
only implies that we are summing at most $(1+\delta)pct$ independent, geometric
random variables with parameter $p$. We next note that
\begin{equation}
	\Prc{\sum_{j=1}^{2(1+\delta)pct}L_j > (1 + 2\delta)\frac{2p^2}{1-p}ct} = 
	\Prc{\sum_{i=1}^{2(1+\delta)pct+(1 + 2\delta)\frac{2p^2}{1-p}ct}\check{B}_i < 2(1+\delta)pct},
	\label{eq:L_j_B_j}
\end{equation}
where the $\check{B}_i$'s are independent, Bernoulli variables with {\em
success} probability $1-p$. The equality above is true since
$\sum_{j=1}^{2y}L_j$ follows a negative binomial distribution, whose cumulative
distribution function is related to the one of the binomial
\cite{morris1963note}.\footnote{Intuitively, $\sum_{j=1}^{2y}L_j$ is the number
of successes in a sequence of $\sum_{j=1}^{2y}L_j + 2y$ Bernoulli trials with
success probability $p$, before exactly $2y$ failures are observed. As a
consequence, $\sum_{j=1}^{2y}L_j > x$ implies that $2y + x$ trials were not
sufficient to observe $2y$ failures.}  The expectation of the sum of the
$\check{B}_i$'s is    
\begin{equation}\label{eq:exp_sumB}
	\Expcc{\sum_{i=1}^{2(1+\delta)pct+(1 + 2\delta)\frac{2p^2}{1-p}ct}\check{B}_i} = 
	2(1+\delta)pct(1 - p) + 2p^2(1 + 2\delta)ct = 2(1+\delta)pct \left(1+ \frac{\delta p}{1+\delta}\right) \, . 
\end{equation}
The above derivations imply
\[
	\frac{2(1+\delta)pct}{\Expcc{\sum_{i}\check{B}_i}}\le \frac{1}{1 + \frac{\delta 
	p}{1+\delta}} < 1 - \frac{\delta p}{3},
\]
where the last inequality follows from simple manipulations and where we
eventually use $1 + \delta + \delta p < 3$. We denote $\mu=\Expcc{\sum_i
\check{B}_t}$ and from~\eqref{eq:exp_sumB} we have that $\mu\geq 2pct$.
From~\eqref{eq:sum_1} and~\eqref{eq:L_j_B_j}, this allows us to write
\begin{align}
	&\Prc{\sum_{j=1}^{2Y}L_j > (1 + 2\delta)\frac{2p^2}{1-p}ct \mid Y \leq(1+\delta)pct} \notag \\ & \le  \Prc{\sum_{i=1}^{2(1+\delta)pct+(1 +
	2\delta)\frac{2p^2}{1-p}ct}\check{B}_i < \left(1 - \frac{\delta 
	p}{3}\right)\mu}\le e^{-\frac{\delta^2 
	p^3ct}{9}} \,,
	\label{eq:L_j_cond}
\end{align}
where the first inequality follows from the inequality relating
$\Expcc{\sum_{i}\check{B}_i}$ to $2(1+\delta)pct$ written above, and the last
inequality is a simple Chernoff bound on the lower tail, considering that $\mu
\geq 2pct$.  

This allows us to conclude the proof. Indeed, we have that for every $t>0$ and
from the law of total probability
\begin{align*}
   &\Prc{ Y + \sum_{j=1}^{2Y}L_j\ge (1+2\delta)\frac{p(1+p)}{(1-p)}ct}\\ &\leq \Prc{\sum_{j=1}^{2Y}L_j\geq(1+2\delta)\frac{2p^2}{1-p}ct \mid Y \leq(1+\delta)pct}+\Prc{Y \geq (1+\delta)pct}  \\ &\leq e^{-\frac{\delta^2p^3ct}{9}}+e^{-\frac{\delta^2}{3}pct}\leq 2e^{-\frac{\delta^2p^3ct}{9}},
\end{align*}
where the last inequality follows from \eqref{eq:ch_y} and \eqref{eq:L_j_cond}.
The claim follows finally from \eqref{eq:equivalence_W_YL}.
\end{proof}
Next, assume that $p = \frac{\sqrt{c^{2} + 6c +1}-c-1}{2c} - \varepsilon$ for
constant $ \varepsilon \in (0, 1 - \frac{\sqrt{c^{2} + 6c +1}-c-1}{2c})$. This
yields that, for some $\varepsilon'>0$, we have $pc(1+p)/(1-p)=1-\varepsilon'$,
so
\begin{align*}
	&(1 + 2\delta)\frac{pc(1+p)}{1-p} = (1+2\delta)(1-\varepsilon')\leq \left(1-\frac{\varepsilon'}{2}\right)\, ,
\end{align*}
where the last inequality holds whenever
$\delta\le\frac{\varepsilon'}{4(1-\varepsilon')}$. For this choice of $\delta$,
we consider Lemma~\ref{le:ch_geom} setting $t=\beta \log n$ for a sufficiently
large $\beta$, in order to have the RHS in~\eqref{le:ch_geom} smaller than
$1/n^2$. We notice that the choice $\beta$ depends only on $\varepsilon$.  With
the above choices, Lemma~\ref{le:ch_geom} implies
\begin{equation}
\label{eq:Z_1+<logn}
    W_1+\dots+ W_{t}<\beta\log n \, , 
\end{equation}
with probability at least $1 - 1/n^2$. 

To complete our proof, consider again the Galton-Watson Branching process
$\{B_t\}_{t\geq 0}$.  The size of the overall population  up to iteration $t$
is $\sum_{i=1}^t W_i$ and we just proved that, for $t = \beta \log n$, 
\[ 
\sum_{i=1}^{t}W_i < \beta\log n \, , 
\] 
with probability at  least $1 - 1/n^2$, which implies that, with the same
probability,   if $B_{t}>0$ then  it would hold 
\[
B_{t} = \sum_{i=1}^{t} W_i - \beta\log n < 0 \, ,
\]
so a contradiction. Therefore,  w.h.p. there is a $\tau < \beta\log n$, such
that $B_{\tau} = 0$,  which in turn implies that, with probability at least $1
- 1/n^2$, the Galton-Watson Branching process $\{B_t\}_{t\geq 0}$ goes
extinct within $\beta \log n$ iterations, with a population size less than
$\beta \log n$.

We are now in a position to show that, with the same probability, $|Q_t| = 0$
for $t\le\beta \log n$, implying that Algorithm~\ref{alg:upper_bound} completes
within $\beta \log n$ rounds and hence, it visits at most $\beta \log n$ nodes.
In fact:
\[
    \Prc{|cc(s)|\ge\beta \log n} 
    = \Prc{|Q_{\beta \log n}| > 0}
    \le\Prc{\sum_{i=1}^{\beta\log n} W_i - \beta\log n > 0}
    \le\frac{1}{n^2} \, ,
\]
where the first equality follows from the definition of $R_\infty$, the second
inequality follows from Fact~\ref{le:ub_dom}, while the last inequality follows
from Lemma~\ref{le:ch_geom}.

We thus proved that, for a fixed source $s \in V$,
Algorithm~\ref{alg:upper_bound} with input $G$ and $G_p$ visits at most $\beta
\log n$ nodes in the graph $G_p$ with probability at least $1-1/n^2$. So,
Lemma~\ref{thm:below_th} follows from a union bound over all possible choices
of source $s \in V$.

\section{The \texorpdfstring{\SWGreg}\, Model above the Threshold}
\label{sec:regularcase}

In this section we prove the first claims  of Theorems~\ref{thm:main_matching} and \ref{thm:main_matching_rf}. We show
that, with probability $\Omega(1)$, the connected component of an initiator node
in the percolation graph $G_p$ of a \SWGreg\ graph contains $\Omega(n)$ nodes,
as soon as $p = 1/2 + \varepsilon$, where $\varepsilon
> 0$ is an arbitrarily-small constant. Moreover, over the same conditions on $p$, we show that $G_p$ has a giant component of $\Omega(n)$ nodes w.h.p.

\subsection{Sequential \texorpdfstring{$L$}\,-visit (proof of Claim 1 of Theorem \ref{thm:main_matching_rf})}

As in the proofs of Section \ref{sec:wup}, we analyze the number of nodes
reached by a BFS-like visit of the percolation graph $G_p$ starting at a set of nodes $I_0$. We consider a slightly modified version of Algorithm~\ref{alg:L-visit} in
which, once a bridge neighbor $x$ of a dequeued node $w$ is ``observed'', it
will no longer be considered in any of the subsequent iterations of the while
loop. This allows us to use the principle of deferred decisions on the
randomness used to determine the bridge neighbor $x$ of a dequeued node $w$ and
on the randomness used to determine whether a bridge edge exists in the
percolation graph.

 \begin{algorithm}
 \caption{\textsc{Sequential} \SIRimmtrunc}
 \small{
\textbf{Input}: A small-world graph $G_{\text{SW}} = (V, E_{\text{SW}})$; a
 subgraph $H = (V, E_H)$; a set of initiators $I_0 \subseteq V$ and a set of deleted nodes $D_0 \subseteq V \setminus I_0$.
 \begin{algorithmic}[1]
 \State $Q = I_0$
 \State $R = \emptyset$
 \State $D = D_0 \cup N(D_0)$
 \While{$Q \neq \emptyset $}\label{line:randommatching:while}
     \State $w = \dequeue(Q)$
 	\State $R = R \cup \{w\}$ 
  	\State $x = $ bridge neighbor of $w$ in $G_{SW}$
     \Case{$x$ is \good for $(D \cup R \cup Q)$ in $G_{\text{SW}}$ and $\{w, x\}
     \in E_H$} \ \ \ \ \Comment{We are using Definition~\ref{def:free_node} }
        \State $Q = Q \cup (\LC^L(x) \setminus \{x\})$
        \State $R = R \cup \{x\}$
     \EndCase
     \Case{$x$ is \good for $(D \cup R \cup Q)$ in $G_{\text{SW}}$ and $\{w, x\} \notin E_H$}
       \State $D = D \cup \{x\}$
     \EndCase
     \Case{$x$ is not \good for $(D \cup R \cup Q)$ in $G_{\text{SW}}$ and $x \in Q$}
         \State $Q = Q \setminus \{x\}$
         \State $R = R \cup \{x\}$
     \EndCase
     \Case{$x$ is not \good for $(D \cup R \cup Q)$ in $G_{\text{SW}}$ and $x \notin Q$}
         \State $D = D \cup \{x\}$
     \EndCase
 \EndWhile\label{line:randommatching:endwhile}
 \end{algorithmic}}
 \label{alg:L-visit-var-matching}
 \end{algorithm}

The following lemma contains the analysis of the Algorithm \ref{alg:L-visit-var-matching}. In detail, it shows that, if we start the visit from a single source node $s$, after $\Theta(\log n)$ steps of the visit we will have $\Omega(\log n)$ nodes in the queue, with constant probability. Moreover, the lemma claims also that, if we start the visit from $\Omega(\log n)$ nodes, the visited nodes will be $\Omega(n)$ w.h.p.

We notice also that the first claim of Theorem \ref{thm:main_matching_rf} is directly implied by the following lemma. 
\begin{lemma}
 Let $V$ be a set of $n$ nodes, $I_0 \subseteq V$ a set of initiators and $D_0 \subseteq V \setminus I_0$ a set of deleted nodes such that $|D_0| \leq \log^4 n$. For every $\varepsilon>0$ and for every probability $p$ such that $1/2+\varepsilon\leq p \leq 1$, there are positive parameters $L$, $k$, $\beta$ and $\gamma$ that depend on $\varepsilon$ such that the following holds.
Sample a graph $G=(V,E)$ according to the \SWGreg distribution and let $G_p$ be the percolation graph of $G$ with percolation probability $p$. Run the \textsc{Sequential $L$-visit} procedure in Algorithm \ref{alg:L-visit-var-matching} on input $(G,G_p,I_0,D_0)$: if $n$ is sufficiently large,
\begin{enumerate}
    \item if $|I_0|=1$, after $\tau_1=\bigO(\log n)$ iterations of the while loop we have that
    \[\Prc{|Q |\geq \beta \log n \text{ or } |Q \cup R \cup D|\geq n/k} \geq \gamma;\]
    \item if $|I_0|\geq \beta \log n$, after $\tau_2=\bigO(n)$ iterations of the while loop we have that $|Q \cup R| \geq n/(4k)$ w.h.p.
\end{enumerate}
\label{lem:randommatching:analysis}
 \end{lemma}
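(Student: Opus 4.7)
\medskip
\noindent
The plan is to mirror the Galton--Watson reduction used to prove Lemma~\ref{le:L-visit}, adjusted to the structural peculiarity of the \SWGreg\ model: every node has exactly one bridge partner, so once $w$ is dequeued and its unique bridge neighbor $x$ is observed, the edge $\{w,x\}$ is the only source of randomness associated with $x$'s bridge. Accordingly, when $x$ is \good\ and $\{w,x\}\in E_H$, the algorithm adds $\LC^L(x)\setminus\{x\}$ (not the full $\LC^L(x)$) to $Q$, while in every other branch $x$ is absorbed into $D$ or $R$ so that its bridge is never re-queried. I will argue one iteration at a time, using deferred decisions on the random matching and on the percolation of the bridges.

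\smallskip
\noindent
Under the conditioning $|D\cup R\cup Q|\leq n/k$, the conditional distribution of the bridge neighbor $x$ of $w$ is, up to additive $\bigO(1/n)$ corrections, uniform over the nodes whose matching partner has not yet been revealed. Combining this with Fact~\ref{lemma:local_cluster}, the expected number $Z_t$ of nodes added to $Q$ in one iteration of the while loop satisfies
\[
\Expcc{Z_t}\ \geq\ p\bigl(1-\bigO(L/k)\bigr)\Bigl(\tfrac{1+p-2p^{L+1}}{1-p}-1\Bigr)\ =\ \tfrac{2p^{2}}{1-p}\bigl(1-\bigO(p^L)\bigr)\bigl(1-\bigO(L/k)\bigr).
\]
Since $\tfrac{2p^{2}}{1-p}=1$ exactly at $p=1/2$, for any $p\geq 1/2+\varepsilon$ I can choose constants $L,k$ depending only on $\varepsilon$ so that $\Expcc{Z_t}\geq 1+\varepsilon'$ for some $\varepsilon'=\varepsilon'(\varepsilon)>0$; since $0\leq Z_t\leq 2L+1$, the variance is bounded by $(2L+1)^2$. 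Exactly as in Subsection~\ref{ssec:bootstrap1}, I then build a Galton--Watson process $\{B_t\}$ whose offspring $W$ is coupled to lower-bound $Z_t$ while the useful regime $|D\cup R\cup Q|\leq n/k$ holds, extending $W$ artificially on a dummy configuration of the same size once the regime is exited so that the coupling is defined for all $t$; this yields a coupling under which, at every iteration, either $|R\cup Q\cup D|\geq n/k$ already or $|Q_t|\geq B_t$.

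\smallskip
\noindent
For Claim~1, I apply Lemma~\ref{lm:bp.lowerbound} to $\{B_t\}$ at a time $\tau_1=\Theta(\log n)$ chosen large enough that $(\varepsilon'/2)\tau_1\geq \beta\log n$; since $\Expcc{W}\geq 1+\varepsilon'$ and $W$ has bounded variance, this yields a constant $\gamma=\gamma(\varepsilon)>0$ such that $\Prc{|Q|\geq \beta\log n\ \text{or}\ |R\cup Q\cup D|\geq n/k}\geq\gamma$, which is Claim~1. For Claim~2, the hypothesis $|Q_0|=|I_0|\geq \beta\log n$ lets me invoke Lemma~\ref{lem:bp:growingfromlognton}, which (for $\beta$ large enough relative to the variance bound $(2L+1)^2$) guarantees $B_t>0$ for all $t\leq n$ with probability at least $1-1/n$. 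Since the algorithm pushes the dequeued node $w$ into $R$ in every iteration that executes, the queue being nonempty for $\tau_2=\lceil n/(4k)\rceil$ iterations forces $|R|\geq n/(4k)$ deterministically, giving $|R\cup Q|\geq n/(4k)$ and proving Claim~2.

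\smallskip
\noindent
The step I expect to be the main obstacle is maintaining the Galton--Watson domination in the presence of the matching correlations: revealing one bridge shifts the conditional distribution of the remaining bridges, an issue absent from the independent-edge \SWG\ analysis. I plan to neutralize this as in the design of Algorithm~\ref{alg:L-visit-var-matching}: at each iteration isolate a ``fresh'' pool $A\subseteq V\setminus(D\cup R\cup Q)$ of nodes whose matching partner has not yet been touched and whose ring neighborhoods avoid $D\cup R\cup Q$. Because $|D|\leq 3|D_0|+t=\polylog(n)+\bigO(t)$ and visited nodes remain $\leq n/k$, the pool $A$ has size $(1-\bigO(L/k))n$, and conditional on $A$ the bridge partner of $w$ is uniform over $A$ up to $\bigO(1/n)$ total-variation error, which is absorbed in the slack afforded by $p\geq 1/2+\varepsilon$.
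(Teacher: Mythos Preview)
Your approach is the same as the paper's---reduce to a Galton--Watson process by deferring the matching and percolation randomness and then appeal to Lemmas~\ref{lm:bp.lowerbound} and~\ref{lem:bp:growingfromlognton}---but you have overlooked one branch of Algorithm~\ref{alg:L-visit-var-matching}, and this breaks your coupling as stated.

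The issue is your claim that $0\leq Z_t\leq 2L+1$. Look at Case~3 of the algorithm: if the bridge partner $x$ of the dequeued node $w$ is \emph{not} \good\ and $x\in Q$, the algorithm removes $x$ from $Q$ (and puts it in $R$). This is necessary for the invariant you yourself identify---once $x$'s unique matching edge has been revealed to be $\{w,x\}$, there is nothing left for $x$ to discover via a bridge, so keeping it in $Q$ would violate the deferred-decision structure. In that branch $Z_t=-1$, so the recursion is $|Q_t|=|Q_{t-1}|-2+(Z_t+1)$ with $Z_t+1\geq 0$, and your straightforward coupling $|Q_t|\geq B_t$ fails: on a step where the visit takes the $Z_t=-1$ branch, the queue drops by $2$ while any nonnegative-offspring GW process drops by at most $1$. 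The paper absorbs this by working with $W$ distributed as the worst-case of $(Z_t+1)/2$ (expectation $>1$ once $L,k$ are fixed from $p\geq 1/2+\varepsilon$) and coupling so that either the regime is exited or $|Q_t|\geq B_{2t}$. Your expectation bound also omits the corresponding $-1/k$ correction; the paper's bound is $\Expcc{Z_t}\geq -\tfrac{1}{k}+p(1-2L/k)\bigl(\tfrac{1+p-2p^{L+1}}{1-p}-1\bigr)$, which still yields $\Expcc{Z_t}\geq 1+\varepsilon'$ after choosing $L,k$.

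Your argument for Claim~2 also skips a case. The coupling only says that at each $t$ either $|Q_t\cup R_t\cup D_t|\geq n/k$ or $|Q_t|\geq B_{2t}$. You handle the second alternative (queue stays nonempty for $\Theta(n)$ steps, so $|R|\geq n/(4k)$), but not the first. The paper closes this by observing that each iteration adds at least one node to $R$ and at most one to $D$, so $|R_t|\geq |D_t|-|D_0|$; together with $|D_0|\leq 3\log^4 n$ this gives $|Q_t\cup R_t|\geq n/(4k)$ whenever $|Q_t\cup R_t\cup D_t|\geq n/k$. You have the ingredients for this (you note $|D|\leq 3|D_0|+t$), but you need to actually invoke them here.
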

 
 \begin{proof}
 We first make some observation that will be useful for the proof of both the claims of the lemma. We begin with  noticing that
Algorithm~\ref{alg:L-visit-var-matching} preserves the following invariant: at
the beginning of each iteration of the while loop, the bridge neighbors of all
nodes in $Q$ have not been observed so far.  From the principle of deferred
decisions, it thus follows that, when a node is dequeued, its bridge neighbor
can be any of the nodes not in $R \cup D$, with uniform probability.

For $t = 1, 2, \dots,$ let $Q_t$, $R_t$, and $D_t$ be the sets of nodes in $Q$,
$R$, and $D$, respectively, at the end of the $t$-th iteration of the while loop,
and let $Z_t$ be the number of nodes added to the queue $Q$ during the $t$-th
iteration. Notice that $|Q_0| = 1$ and 
\begin{equation}\label{eq:queue_recursion_matching}
|Q_{t}| = 
\left\{
\begin{array}{cl}
0 & \mbox{ if } Q_{t-1} = \emptyset \\
|Q_{t-1}| + Z_t - 1 & \mbox{ otherwise}
\end{array}
\right.
\end{equation}
Observe that here $Z_t$ is an integer-valued random variable with $-1 \leqslant
Z_t \leqslant 2L$. As we did in the proof of Lemma~\ref{le:L-visit} we show
that, as long as the overall number of nodes in $Q \cup R \cup D$ is below a
constant fraction of $n$, the sequence $\{|Q_t|\}_t$ stochastically dominates a
diverging Galton-Watson branching process (Definition \ref{def:GW}).

Let $k \geq 1 $ be a sufficiently large constant that will be fixed later.
Consider the generic $t$-th iteration of the while loop with $Q \neq
\emptyset$, let $|Q \cup R \cup D| \leqslant n/k$ be the  number of nodes in
the queue or in the set $R \cup D$ at the beginning of the while loop, and let
$A$ be the set of nodes at distance larger than $L$ from any node in $Q \cup R
\cup D$ in the subgraph of $G_{\text{SW}}$ induced by the edges of the ring,
i.e.
\[
A=\{v \in V \mid d_{(V,E_1)}(Q \cup R \cup D,v) \geq L+1\}\,.
\]
Observe that there are at most $2L(n/k)$ nodes at distance smaller than or
equal to $L$ from a node in $Q \cup R \cup D$ in $(V,E_1)$, so $|A| \geq
n(1-2L/k)$.

Let $w$ be the node dequeued at the $t$-th iteration of the while loop and let
$x$ be its bridge neighbor. Since $|Q \cup R \cup D| \leqslant n/k$, from the
principle of deferred decision it follows that $x$ is already in the queue $Q$
with probability at most $1/k$ while $x$ is free for $Q \cup R \cup D$ in
$G_{\text{SW}}$ with probability at least $|A|/n = (1 - 2L/k)$ and the bridge
edge $\{w,x\}$ exists in the percolation graph with probability $p$.  Hence,
random variable $Z_t$ in \eqref{eq:queue_recursion_matching} takes vaues either
$-1$, with probability at most $1/k$, or the size of the local cluster centered
at $x$ excluding $x$ itself, with probability at least $p (1 - 2L/k)$.
From~\eqref{eq:truncated_LC} on the expected size of a local cluster it thus
follows that the expected number of new nodes added to the queue is 
\begin{align*}
\Expcc{Z_t \;|\; Q_{t-1} \neq \emptyset, \, |Q_t \cup R_t \cup D_t| \leqslant n/k} 
& \geqslant - \frac{1}{k} + p \left( 1 - \frac{2L}{k} \right) 
\left(\frac{1 + p - 2p^{L+1}}{1-p} - 1 \right) \\
& = - \frac{1}{k} + \frac{2p^2}{1-p} \left( 1 - \frac{2L}{k} \right) 
\left(1 - 2 p^L\right) \\
& = \frac{2p^2}{1-p} - \bigO\left(\frac{L}{k}\right) - \bigO\left({p^L}\right) \, .
\end{align*}
For every $\varepsilon > 0$, the above inequality allows us to fix suitable
constants $L$, $k$, and $\varepsilon' > 0$ such that, if $p = \frac{1}{2} +
\varepsilon$ then 
\[
\Expcc{Z_t \;|\; Q_{t-1} \neq \emptyset, \, |Q_t \cup R_t \cup D_t| \leqslant n/k} 
\geqslant 1 + \varepsilon'\,.
\]

As long as there are less than $n/k$ nodes in $(Q \cup R \cup D)$ and $Q$ is
not empty, the number of nodes in $Q$ thus satisfies $|Q_t| = |Q_{t-1}| - 1 +
Z_t = |Q_{t-1}| - 2 + (Z_t + 1)$, where $Z_t + 1$ is a non-negative
integer-valued random variable with expectation larger than $2$.
We now proceed as in the proof of Lemma \ref{le:L-visit}, and omit some details.

We define a Galton-Watson branching process $\{ B_t \}_t$ with the aim of
bounding $Q_t$ in terms of $B_{2t}$. The branching process $\{ B_t \}_t$ is
defined in terms of a random variable $W$ defined to be the worst-case
distribution of $(Z_t + 1)/2$ when $|Q_t \cup R_t \cup D_t | \leq n/k$. The
process is such that $\Expcc{ W} \geq 1 + \varepsilon'$ for an absolute
constant $\varepsilon' > 1$, and we can construct a coupling (see
Definition~\ref{def:coupli} and Lemma~\ref{lem:stochastic_domination_coupling}
in the Appendix) of $\{ B_t \}_t$ with the execution of the algorithm such
that, at every time step $t$, it holds with probability 1 that $|Q_t \cup R_t
\cup D_t | \geq n/k$ or that $|Q_t| \geq  B_{2t}$.

Now we proceed with the proofs of the two claims of the lemma. As for the first claim, it follows from Lemma \ref{lm:bp.lowerbound}. Indeed, since $W$ is a bounded random variable (i.e. $W \leq 2L+1$), it has finite variance. Therefore, for Lemma \ref{lm:bp.lowerbound}, we have that there exists positive constants $\gamma$, $\beta$ and $\beta'$ (depending on $\varepsilon$) such that, if we indicate with $t=\beta \log n$,
\begin{equation*}
    \Prc{|Q_{t}|\geq \beta \log n \text{ or } |Q_{t}\cup R_t \cup D_t|\geq n/k}\geq \Prc{B_{2t}\geq  \beta \log n}\geq \gamma,
\end{equation*}
and the first claim follows from the equation above.

As for the second claim, it follows by Lemma \ref{lem:bp:growingfromlognton}. Indeed, the random variables $W$ are finite, since $W \leq 2L+1$ and then, we have that there exists positive constants $c$ and $\beta$ (which depends on $\varepsilon$, we can take $\beta$ as the maximum with the previous constant) such that, if we indicate with $t=cn$
\begin{equation*}
    \Prc{|Q_{t}|>0 \text{ or } |Q_t \cup R_t \cup D_t| \geq n/k \mid |Q_{1}|\geq \beta \log n} \geq \Prc{B_{2t}>0 \mid B_1 \geq \beta \log n}\geq 1-\frac{1}{n}.
\end{equation*}
The second claim follows from the fact that, if we are in the case in which $Q_t>0$, we obviously have that $|R_t|\geq t$, since in each iteration of the while loop in the algorithm at least one node is added to $R_t$. Instead, in the case in which $|Q_t \cup R_t \cup D_t|\geq n/k$, we can claim that $|Q_t \cup R_t | \geq n/(4k)$. Indeed, while the visit is running, at least one node is added to $R_t$ at each step and at most one node is added to $D_t$, so $|R_t| \geq |D_t|-|D_0|$. So, since $|D_0| \leq 3\log^4 n$, for a sufficiently large $n$
\[|Q_t \cup R_t \cup (D_t \setminus D_0)| \geq \frac{n}{2k}\]
and this implies that
\[|Q_t \cup R_t| \geq \frac{n}{4k}.\]
 \end{proof}
 
 \subsection{Wrapping up: proof of Claim 1 of Theorem \ref{thm:main_matching}}
 
 The proof proceeds on the same lines as the proof in Subsection \ref{ssec:proof:thm:intro-erdos-2}. Indeed, we want to prove that w.h.p. in $G_p$ there exists a giant component with $\Omega(n)$ nodes. In order to do so, we introduce the following algorithm, which is divided into two phases.

\begin{algorithm}
\caption{\textsc{Search of the giant component}}
\small{
\textbf{Input}: A small-world graph $G_{\text{SW}} = (V, E_{\text{SW}})$; a
subgraph $H$ of $G_{\text{SW}}$; two integers $\beta$, $\beta'$.
\begin{algorithmic}[1]
\State $Q = \emptyset$
\State $D = \emptyset$
\State $R = \emptyset$
\While{$|Q| \leq \beta \log n$ or $|Q \cup R \cup D| \leq n/k$}\label{line:bootstrap:matching:while}\Comment{\textsc{First phase}}
	\State Let $s \in V \setminus D$
	\State $Q = \{s\}$
	\State $R = \emptyset$
	\For{$i=1,\dots,\beta'\log n$}
	    \State Perform lines \ref{line:randommatching:while}-\ref{line:randommatching:endwhile} of Algorithm \ref{alg:L-visit-var-matching}\Comment{Sequential $L$-visit}
 	 \EndFor
 	 \State $D = D \cup R$
\EndWhile
\If{$|Q \cup R \cup D| \leq n/k$}\label{line:bootstrap:matching:secondphase}\Comment{\textsc{Second phase}}
\State Perform Algorithm \ref{alg:L-visit-var-matching} with input $G_{SW}$, $H$, $I_0=Q$, $D_0=R\cup D$  \Comment{Sequential $L$-visit}
\EndIf
\end{algorithmic}}
\label{alg:bootstrap_randommatching}
\end{algorithm}
In the first phase is a ``bootstrap'' where we search for a node in the giant component: we look for a source node $s$ such that, after $\Theta(\log n)$ steps of the sequential $L$-visit, the queue $Q$ has $\Omega(\log n)$ nodes. The second phase consists of the sequential $L$-visit starting from the queue $Q$ with $\Omega(\log n)$ nodes. We note that in this case, we have not performed the analysis via the parallel visit: this is because the random variables describing the process, in this case, assume dependencies that are difficult to handle in the parallel case.
 
The following lemma contains the analysis of Algorithm \ref{alg:bootstrap_randommatching} with in input $(G,G_p,\beta,\beta')$, where $\beta$ and $\beta'$ are two positive constants. It claims that the first phase of the algorithm, that begins in the while loop in line \ref{line:bootstrap:matching:while}, ends after $\tau_1=\bigO(\log n)$ iterations of the while loop w.h.p. Moreover, the lemma claims that the in second phase of the algorithm (starting in line \ref{line:bootstrap:matching:secondphase}) the visit reach $\Omega(n)$ nodes.

\begin{lemma}
Let $V$ be a set of $n$ nodes. For every $\varepsilon>0$ and $\beta>0$, and for every probability $p$ such that $1/2+\varepsilon \leq p \leq 1$, there are positive parameters $L$, $k$ (that depends only on $\varepsilon$)
and a constant $\beta'$ (that depends on $\varepsilon$ and $\beta$) such that the following holds. Sample a graph $G=(V,E)$ according to the \SWGreg distribution and let $G_p$ be the percolation graph of $G$ with percolation probability $p$. Run the \textsc{Search of the giant component} in Algorithm \ref{alg:bootstrap_randommatching} on input $(G,G_p,\beta,\beta')$: if $n$ is sufficiently large,
\begin{enumerate}
    \item the first while loop in line \ref{line:bootstrap:matching:while} terminates at some round $\tau_1 = \bigO(\log n)$ w.h.p.;
    \item conditioning on the above event, the second phase of the algorithm starting in line \ref{line:bootstrap:matching:secondphase} terminates at some round $\tau_2 =\bigO(n)$ in which, w.h.p. $|Q \cup R| \geq n/(4k)$.
\end{enumerate}
\end{lemma}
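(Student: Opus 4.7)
The plan is to mirror the proof of Lemma~\ref{lem:bootstrap} from Subsection~\ref{ssec:proof:thm:intro-erdos-2}, now using Lemma~\ref{lem:randommatching:analysis} (the \SWGreg\ analog of Lemma~\ref{le:L-visit}) at the key steps. Algorithm~\ref{alg:bootstrap_randommatching} performs a sequence of essentially independent bootstrap attempts, each starting from a fresh source $s \in V\setminus D$ and running the sequential $L$-visit for $\beta'\log n$ iterations; once some attempt succeeds in growing either the queue or the overall visited set above the required thresholds, the algorithm enters the second phase and runs a single long sequential visit. The two claims correspond, respectively, to bounding the number of attempts before the first phase succeeds, and to bounding the duration of the post-bootstrap visit.

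For Claim~1, I would fix $\gamma$ and $\beta'$ to be the constants produced by Claim~1 of Lemma~\ref{lem:randommatching:analysis} for the given $\varepsilon$ and $\beta$, and apply that lemma to each bootstrap attempt in turn. The key preliminary observation is that throughout the first phase the accumulated set $D$ satisfies $|D| \leq \log^4 n$: since every node of a \SWGreg\ graph has degree at most $3$, a single bootstrap attempt enlarges $R$ (and hence $D$ after the update $D = D\cup R$) by at most $(\beta'\log n)(2L+1) = \bigO(\log n)$ nodes, so after $\tau_1 = (2/\gamma)\log n$ attempts the cumulative $D$ has size $\bigO(\log^2 n) \ll \log^4 n$. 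The hypothesis $|D_0|\leq \log^4 n$ of Lemma~\ref{lem:randommatching:analysis} is thus maintained throughout, and each attempt succeeds (producing $|Q|\geq \beta\log n$ or $|Q\cup R\cup D|\geq n/k$) with probability at least $\gamma$, independently of the outcomes of previous attempts up to the conditioning discussed below. A union bound over $\tau_1$ attempts then yields that with probability at least $1-(1-\gamma)^{\tau_1} \geq 1-1/n$, at least one attempt succeeds within $\tau_1 = \bigO(\log n)$ iterations of the outer while loop.

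For Claim~2, I would condition on the successful termination of the first phase and split into the two subcases determined by which disjunct caused termination. In the case where $|Q \cup R \cup D| \geq n/k$ already holds at the end of the first phase, the second phase is not entered and the bound $|Q \cup R| \geq n/(4k)$ follows directly from the earlier estimate $|D| = \bigO(\log^2 n) = o(n)$. Otherwise, the first phase terminates with $|Q|\geq \beta\log n$ and $|D| = \bigO(\log^2 n) \ll \log^4 n$, at which point Claim~2 of Lemma~\ref{lem:randommatching:analysis}, applied to the call in line~\ref{line:bootstrap:matching:secondphase} with initiator set $Q$ and deleted set $D\cup R$, guarantees that within $\tau_2 = \bigO(n)$ iterations of the while loop one has $|Q\cup R|\geq n/(4k)$ with high probability.

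The main technical delicacy I expect is the chain of conditionings needed to treat the bootstrap attempts as if they were independent. Each failed attempt only queries $\bigO(\log n)$ edges of $G$ and of the percolation $G_p$ (through the ring edges incident to the $\bigO(\log n)$ visited nodes and their at most $\bigO(\log n)$ matching edges), so after conditioning on any failure history the unobserved portion of the random graph is statistically a \SWGreg\ sample on the remaining vertices with an enlarged but still $\bigO(\log^2 n)$-sized set of deleted nodes. Combined with the principle of deferred decisions and the fact that every new source is chosen outside the current $D$, this is exactly the setting to which Lemma~\ref{lem:randommatching:analysis} applies with success probability $\gamma$. Once this conditioning is set up cleanly, the proof reduces to a standard geometric-trials argument of the same shape as Lemma~\ref{lem:bootstrap}.
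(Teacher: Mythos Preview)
Your proposal is correct and follows essentially the same approach as the paper, which simply states that the proof ``proceeds similarly to the proof of Lemma~\ref{lem:bootstrap}'' and ``follows from Lemma~\ref{lem:randommatching:analysis}.'' Your write-up is in fact more explicit than the paper's: you correctly exploit the degree bound of \SWGreg\ (at most $3$) to get the tighter $\bigO(\log^2 n)$ bound on $|D|$ across all attempts, and you articulate the deferred-decisions argument that justifies treating successive bootstrap attempts as having success probability at least $\gamma$ conditioned on prior failures.
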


\begin{proof}
The proof proceeds similarly to the proof of Lemma \ref{lem:bootstrap} in Section \ref{sec:wup}. In particular, it follows from Lemma \ref{lem:randommatching:analysis}. 
\end{proof}

We notice that Claim 1 of Theorem \ref{thm:main_matching} follows from the lemma above.
\section{Regular Graphs Below the Threshold} \label{sec:reg-gen}
In this section, we analyze the percolation process in regular graphs and prove Theorem~\ref{th:reg.upper}. As for the $3$-regular graphs generated by the \SWGreg\ model, we observe that Claim~$2$ of Theorem~\ref{thm:main_matching} is a direct consequence of the general result proved in this section.  

\begin{theorem}\label{thm:reg-infection-stops}
Let $G = (V,E)$ be a graph of maximum degree $d$ and let $s$ be a vertex in
$V$.  If $p = (1-\varepsilon)/(d-1)$ for some $\varepsilon$ such that $1 > \varepsilon > 0$, then the
probability that the connected component of $s$ in the percolation graph $G_p$
of $G$ has size $> t$ is at most $\exp(-\Omega(\varepsilon^2 t))$. Furthermore,
w.h.p., all connected components of $G_p$ have size $\bigO(\varepsilon^{-2}
\log n)$.
\end{theorem}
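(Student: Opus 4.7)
The plan is to run a BFS exploration of the connected component $C(s)$ of $s$ in $G_p$ while applying the deferred-decisions principle to the edge-percolation variables. We start with $Q=\{s\}$; at each iteration we dequeue one node $w$ and examine all edges of $G$ incident to $w$ that have not yet been examined, sampling the percolation status of each such edge on the fly as an independent $\mathrm{Bernoulli}(p)$ variable. Because $G$ has maximum degree at most $d$ and, except for the root $s$, one incident edge of each dequeued node is already known to exist (the edge through which $w$ was discovered), each dequeued node contributes at most $d-1$ freshly examined edges, while $s$ contributes at most $d$.

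Let $N_t$ denote the number of edges examined during the first $t$ iterations, so $N_t\leq (d-1)t+1$, and let $S_t$ denote the number of those edges that percolate. Writing $X_t$ for the number of distinct nodes that have been added to the queue by the end of iteration $t$, we have $X_t\leq S_t$ and $|Q_t|=1+X_t-t$. The key observation I will use is that $|C(s)|>t$ implies that the BFS is still running at the end of iteration $t$, so $|Q_t|\geq 1$, which forces $X_t\geq t$ and hence $S_t\geq t$. Coupling the BFS with an alternative process that always performs $(d-1)t+1$ independent $\mathrm{Bernoulli}(p)$ trials (padding with extra independent edges after the BFS has terminated) shows that $S_t$ is stochastically dominated by $\mathrm{Bin}((d-1)t+1,p)$, so
\[
\Prc{|C(s)|>t}\;\leq\;\Prc{\mathrm{Bin}((d-1)t+1,\,p)\geq t}.
\]
Since $p\cdot((d-1)t+1)=(1-\varepsilon)t+p$, the right-hand event is an upward deviation of the mean by a factor $(1+\Theta(\varepsilon))$, and the multiplicative Chernoff bound gives $\Prc{|C(s)|>t}\leq\exp\bigl(-\Omega(\varepsilon^{2}t)\bigr)$, proving the first claim of the theorem.

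For the second claim I will choose $t=C\varepsilon^{-2}\log n$ for a sufficiently large absolute constant $C$, so that the above bound is at most $n^{-2}$ for every fixed starting vertex; a union bound over the $n$ possible choices of $s\in V$ then yields that w.h.p. every connected component of $G_p$ has size $\bigO(\varepsilon^{-2}\log n)$. I do not expect any substantive obstacle here. The only delicacy is in justifying the deferred-decisions step cleanly: namely, that the $\mathrm{Bernoulli}(p)$ variable used to decide whether a freshly examined edge percolates is genuinely independent of all previously examined edges. This follows because the BFS never inspects the same edge of $G$ twice and because the edge-percolation variables are mutually independent by definition, so the principle of deferred decisions merely reorders independent samples along the order in which the BFS inspects the edges.
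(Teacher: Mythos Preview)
Your proposal is correct and follows essentially the same approach as the paper: a BFS exploration with deferred decisions, the observation that after $t$ dequeues at most $(d-1)t+1$ edges have been tested, the implication that $|C(s)|>t$ forces at least $t$ successes among these Bernoulli$(p)$ trials, and a multiplicative Chernoff bound yielding $\exp(-\Omega(\varepsilon^{2}t))$, followed by a union bound for the second claim. The only cosmetic differences are notational (you track $N_t,S_t,X_t$ explicitly and spell out the padding coupling), but the argument is the same.
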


\begin{proof}
We consider an execution of the BFS algorithm below with the percolation graph $G_p = (V,E_p)$ of $G = (V,E)$ as input $H$ and any fixed source $s \in V$\footnote{We notice that, unlike the other visiting procedures we used in the previous sections, this algorithm does not require to have the graph $G$ as input, only its percolation.}.

\begin{algorithm}[H]
\caption{\textsc{BFS visit}}
\small{
\textbf{Input}: a graph $H=(V,E_H)$ and a  source $s \in V$. \\

    \begin{algorithmic}[1]
		\State $Q = \{s\}$
		\While{$Q \neq \emptyset $}\label{line:bfs:while}
			\State $w = \dequeue(Q)$
			\State $\texttt{visited}(w) = \True$ 
					\For {each neighbor $x$ of $w$ in $H$ such that $\texttt{visited}(x) = \False$}\label{line:bfs:for}
		
						 \State $\enqueue(y,Q)$
					\EndFor\label{line:bfs:endfor}
		 \EndWhile
	\end{algorithmic}}
\end{algorithm}

\noindent We first notice that if the above algorithm visits more than $t$
vertices, then it executes the while loop more than $t$ times. Consider what has happened after the $t$-th iteration of the while loop. Each iteration removes one node from the queue, the queue is not empty, and initially the queue held one node, so we have to conclude that we added at least $t$ nodes to the queue in the first $t$ iterations of the while loop. Consider how many times we run the for cycle in lines \ref{line:bfs:for}-\ref{line:bfs:endfor} in each while-loop iteration and assume by deferred decision that we make the choice about the edge $(w,x) \in E_p$
only when   line \ref{line:bfs:for} is executed.  That cycle  is  executed at most $d$ times at the first iteration, and at most $d-1$ times subsequently (because every vertex in the queue has at most $d-1$ non-visited neighbors in $G_p$) and so it is executed at most $t \cdot (d-1) + 1$ times. Each time it is executed implies that the event $(w,x) \in E_p$ holds and it has probability $p$, independent of everything else.

From the above argument, it follows that  we have observed at most $t\cdot (d-1)+1$ Bernoulli random variables with parameter $p = (1-\varepsilon)/(d-1)$ and we found that at least $t$ of them were $1$. By Chernoff bounds this happens with probability $\exp(-\varepsilon^2 t/3)$.

As for the ``Furthermore'' part, it suffices to choose a real $b$ large enough so that the probability that the connected component of $s$ has size more than $b$ is at most $1-1/n^2$, then take a union bound over all source  $s$.
\end{proof}

\section{Generalizations and Outlook} \label{sec:further}

As discussed in the introduction, our goal was to investigate the simplest models that can at least qualitatively capture essential aspects of epidemic processes observed in realistic scenarios. On the other hand, we believe some variants and generalizations  of the models  we considered in this paper deserve a rigorous study. Two natural directions concern extensions to the IC protocol we considered and more general models of the underlying network topology.

\subsection{Non-homogenous  bond percolation probabilities} \label{ssec:diff-probs}

A possible extension of the considered small-world models is to introduce two different  bond percolation (i.e., transmission) probabilities, each one assigned to each type of connection.
Formally, given a small-world graph $G = (V, E_1 \cup E_2)$, the percolation graph $G_{p_1, p_2}$ is the result of the following process: a bond percolation   with probability $p_1$ is applied on $G_1=(V, E_1)$; and  a bond percolation with probability $p_2$ is applied to    $G_2 = (V, E_2)$;   finally, we get the  union of the two resulting random subgraphs, denoted as  $G_{p_1, p_2}$.

With  $RF(p_1, p_2)$, we refer to the corresponding generalization of the RF protocol  considered in this paper.

Since  $p_1$ is the percolation probability of the local edges, it is immediate the following result.

\begin{fact} 
Let  $G=(V,E)$ be  a one-dimensional small-world graph and $p_1, p_2$ be, respectively, the percolation
probabilities on the local and bridge edges. Then,  for each node $v \in V$, the size $ \AN^L(v)$ of its $L$-truncated
local cluster $\LC ^L (v)$ satisfies the following
\begin{equation} \label{eq:truncated_LC_extension}
    \Expcc{| \AN^L(v)|} =\frac{1+p_1}{1-p_1}-\frac{2p_1^{L+1}}{1-p_1}\,.
\end{equation}
\label{lemma:local_cluster_two_prob}
\end{fact}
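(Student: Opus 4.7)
The key observation is that the $L$-truncated local cluster $\LC^L(v)$ is defined purely in terms of percolated ring edges: by Definition~\ref{def:trunc-local}, $\LC^L(v)$ consists of the vertices reachable from $v$ using at most $L$ activated local edges, so bridge edges (and hence the probability $p_2$) play no role in its distribution. In particular, the presence of $p_2$ in the statement is a red herring, and the problem reduces to the computation already carried out in the homogeneous case (Fact~\ref{lemma:local_cluster}), with $p$ replaced by $p_1$.

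The plan is therefore to mimic that earlier proof verbatim. Decompose $|\AN^L(v)| = |\RN^L(v)| + |\LN^L(v)| + 1$, where $\RN^L(v)$ and $\LN^L(v)$ are, respectively, the sets of nodes reached from $v$ moving only rightward or only leftward along activated local edges, stopping after at most $L$ steps. Since different ring edges percolate independently with probability $p_1$, the random variable $|\RN^L(v)|$ is a geometric variable with parameter $1-p_1$, truncated at $L$, with $\Prc{|\RN^L(v)|=i}=p_1^i(1-p_1)$ for $i<L$ and $\Prc{|\RN^L(v)|=L}=p_1^L$; by symmetry, $|\LN^L(v)|$ has the same distribution.

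The remaining step is the elementary computation $\Expcc{|\RN^L(v)|}=(1-p_1)\sum_{i=1}^{L-1} i\,p_1^i + L p_1^L$, which, after summing the arithmetico-geometric series, simplifies (as in the proof of Fact~\ref{lemma:local_cluster}) to $\tfrac{1}{2}\bigl(\tfrac{1+p_1}{1-p_1} - \tfrac{2p_1^{L+1}}{1-p_1}\bigr) - \tfrac{1}{2}$. Using linearity of expectation and the symmetry $\Expcc{|\LN^L(v)|}=\Expcc{|\RN^L(v)|}$, one gets $\Expcc{|\AN^L(v)|} = 2\,\Expcc{|\RN^L(v)|} + 1 = \tfrac{1+p_1}{1-p_1} - \tfrac{2p_1^{L+1}}{1-p_1}$, as claimed. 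No step here is a genuine obstacle since it is a word-for-word adaptation of the previous calculation; the only thing worth emphasizing explicitly in the write-up is why $p_2$ drops out, namely because the notion of local cluster does not touch bridge edges.
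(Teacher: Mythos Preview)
Your proposal is correct and matches the paper's approach: the paper simply remarks that ``since $p_1$ is the percolation probability of the local edges, it is immediate'' and states the result, implicitly invoking Fact~\ref{lemma:local_cluster} with $p$ replaced by $p_1$. Your write-up just makes explicit the observation that $p_2$ plays no role and reproduces the earlier computation, which is exactly the intended argument.
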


%where the probability is over both the randomness of the choice of $G$ from $\mathcal{SWG}(n,c/n)$ and over the choice of the percolation graph $G_p$.

Our analysis for the homogenous case easily extends to the above non-homogenous setting: the next theorem formalizes the main result in terms of epidemic protocols.

\begin{theorem}[The $RF(p_1, p_2)$ protocol  on the \SWG model] \label{thm:gen_overthershold_two_prob}
Let $V$ be a set of $n$ vertices, $I_0 \subseteq V$ be a set of source nodes, and $p_1, p_2 \geq 0$ two constant probabilities. For any constant
$c>0$,  sample a graph $G=(V, E_1 \cup E_2)$ from the $\SWGm(n,c/n)$ distribution, and run the $RF(p_1, p_2)$ protocol with transmission probabilities $p_1$ over $G_1 = (V, E_1)$ and $p_2$ over $G_2 = (V, E_2)$ from $I_0$. For every $\varepsilon>0$, we have the following:

\begin{enumerate}
\item If $p_1 + c \cdot p_1 p_2 + c \cdot p_2 \geq 1 + \varepsilon$, then, with
probability $\Omega_\varepsilon(1)$ a subset of  $\Omega_\varepsilon(n)$ nodes  will be informed within time $\bigO_\varepsilon(\log n)$, even if $|I_0| = 1$. Moreover, if  
$|I_0|\geq\beta_\varepsilon\log n$ for a sufficiently large constant
$\beta_\varepsilon$ (that depends only on $\varepsilon$), 
then the above event occurs  \emph{w.h.p.};
    
\item If $p_1 + c \cdot p_1 p_2 + c \cdot p_2 \leq 1 - \varepsilon$, then w.h.p.
  the total  number of informed nodes will be  $\bigO_\varepsilon(|I_0|\log n)$, and the protocol will stop within $\bigO_\varepsilon(\log n)$ time.
\end{enumerate}
\end{theorem}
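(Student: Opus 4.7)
The plan is to re-run the argument of Theorem~\ref{thm:gen_overthershold} with the only substantive change being the expected per-step contribution of the modified BFS. In the non-homogeneous setting, when a node $w$ is dequeued in Algorithm~\ref{alg:L-visit}, its bridge neighbors in the percolation graph $G_{p_1,p_2}$ appear independently with probability $p_2 c/n$ (rather than $pc/n$), and the $L$-truncated local cluster grown from each free bridge neighbor $x$ uses ring edges percolated with probability $p_1$, so by the straightforward analog of Fact~\ref{lemma:local_cluster} (see also Fact~\ref{lemma:local_cluster_two_prob}) it has expected size $\frac{1+p_1}{1-p_1} - \frac{2p_1^{L+1}}{1-p_1}$. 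Since bridge coins and local coins are mutually independent, the expected number of new queue entries per dequeue operation equals
\[
N \;=\; p_2\, c \cdot \frac{1+p_1}{1-p_1},
\]
up to the usual $\bigO(L/k) + \bigO(p_1^L) + \bigO(L/n)$ slack. Setting $N = 1$ and clearing denominators yields $p_1 + c\,p_1 p_2 + c\,p_2 = 1$, which is exactly the threshold in the statement.

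For Claim~1, assuming $p_1 + c\,p_1 p_2 + c\,p_2 \geq 1 + \varepsilon$, first I would choose $L$ and $k$ large enough (depending on $\varepsilon, c, p_1, p_2$) so that the truncation and ``\good-node'' slack are absorbed and $\Expcc{Z_t \mid |Q_{t-1}|=i,|R_{t-1}|=r} \geq 1 + \varepsilon'$ for some $\varepsilon' > 0$; the per-step variance remains $\bigO(L^2)$. Stochastic domination by a Galton--Watson process (Lemma~\ref{lem:stochastic_domination_coupling}) combined with Lemma~\ref{lm:bp.lowerbound} then reproduces Lemma~\ref{le:L-visit} verbatim: after $\bigO(\log n)$ iterations, with probability at least a constant $\gamma_\varepsilon > 0$, either $|Q \cup R| \geq n/k$ or $|Q| \geq \beta_\varepsilon \log n$. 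The parallel-visit analysis of Lemma~\ref{thm:exponential_growth} carries over as well: the dependency graph of the \good-neighbor indicators is determined by ring distances alone, so Theorem~\ref{thm:svante} applies with the same degree bound $8L$, and Wald's identity with the $L$-truncated local-cluster expectation above yields a multiplicative growth by a factor $1+\delta_\varepsilon > 1$ per iteration w.h.p., until $\Omega(n)$ nodes are reached in $\bigO(\log n)$ further rounds. Chaining the sequential bootstrap and the parallel phase as in Algorithm~\ref{alg:union_L-visit} then delivers both the $\Omega_\varepsilon(1)$-probability statement for a single source and the w.h.p.\ statement when $|I_0| \geq \beta_\varepsilon \log n$.

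For Claim~2, assuming $p_1 + c\,p_1 p_2 + c\,p_2 \leq 1 - \varepsilon$, I would mirror Section~\ref{sec:belo}. Define the Galton--Watson generator $W = Y + \sum_{j=1}^{2Y} L_j$, where now $Y \sim \Bin(n, p_2 c/n)$ counts bridge neighbors in $G_{p_1,p_2}$ and each $L_j$ is an independent copy of the number of successes before the first failure in Bernoulli trials of success probability $p_1$ (modelling one side of a local cluster). A direct computation gives
\[
\Expcc{W} \;=\; p_2 c + 2\,p_2 c \cdot \frac{p_1}{1-p_1} \;=\; \frac{p_2 c\,(1+p_1)}{1-p_1} \;\leq\; 1 - \varepsilon''
\]
for some $\varepsilon'' = \varepsilon''(\varepsilon) > 0$. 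Fact~\ref{le:ub_dom}'s deferred-decisions argument dominating the BFS visit of Algorithm~\ref{alg:upper_bound} by this process still applies, and the negative-binomial Chernoff calculation of Lemma~\ref{le:ch_geom} goes through with $p$ replaced by $p_1$ in the geometric-sum tail and by $p_2$ in the binomial tail, yielding that the visit from any fixed source terminates in $\bigO_\varepsilon(\log n)$ rounds after visiting $\bigO_\varepsilon(\log n)$ nodes, except with probability $1/n^2$. A union bound over $I_0$ together with the equivalence of Theorem~\ref{thm:equivalence} then gives the claimed $\bigO_\varepsilon(|I_0|\log n)$ bound on the total number of recovered nodes and the $\bigO_\varepsilon(\log n)$ bound on the completion time.

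The whole analysis factorises cleanly between the two coin families because ring edges and bridges are activated by independent Bernoulli variables, so no genuinely new coupling or concentration idea is needed. The main work is bookkeeping: every slack parameter ($L, k, \beta, \gamma, \delta, \varepsilon', \varepsilon''$) now depends on $p_1, p_2, c$ and on the margin $\varepsilon$, and one must verify that each lemma of Sections~\ref{sec:wup} and~\ref{sec:belo} that was stated in terms of a single probability $p$ remains valid with $pc(1+p)/(1-p)$ replaced by the factored expression $p_2 c\,(1+p_1)/(1-p_1)$. The only place where one must be mildly careful is the case $p_1$ close to $1$: here the local cluster can be very large, but the $L$-truncation (with $L$ depending on $\varepsilon, p_1$) keeps all variance and concentration bounds uniform and finite, which is all that the Galton--Watson domination requires.
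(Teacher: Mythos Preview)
Your proposal is correct and follows essentially the same route as the paper: both proofs simply re-run the homogeneous analysis of Sections~\ref{sec:wup} and~\ref{sec:belo} with the substitution $p\mapsto p_1$ on ring edges and $p\mapsto p_2$ on bridges, so that the critical quantity $pc(1+p)/(1-p)$ becomes $p_2c(1+p_1)/(1-p_1)$, and then restate Lemmas~\ref{le:L-visit}, \ref{thm:exponential_growth}, \ref{thm:bootstrap}, \ref{thm:below_th}, and \ref{le:ch_geom} in this factored form. The paper's version of Lemma~\ref{le:ch_geom_two_prob} records the tail bound $2e^{-\delta^2\min\{p_1,p_2\}^3 ct/9}$, which matches your observation that the Chernoff exponent picks up $p_1$ from the geometric part and $p_2$ from the binomial part.
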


As observed above, it is possible to easily recover the proof of the above theorem from the analysis of the homogeneous case we described in the previous sections. In the following two subsections, we thus only describe how the main technical statements changes in this non-homogeneous case.

\paragraph*{Proof of Claim I of Theorem \ref{thm:gen_overthershold_two_prob}}

We first consider  Algorithm \ref{alg:L-visit}, recalling  the notion of    \good node in    Definition \ref{def:free_node}, and    generalize   Lemma \ref{le:L-visit}.

\begin{lemma}\label{le:L-visit_two_prob}
Let $V$ be a set of $n$ nodes, $s \in V$ an \textit{initiator} node and $D_0 \subseteq V \setminus \{s\}$ a set of deleted nodes such that $|D_0 | \leq \log^4 n$.
For every $\varepsilon >0$ and $c>0$, and for every   probabilities   $p_1, p_2$
such that
 \[p_1 + c \cdot p_1 p_2 + c \cdot p_2 \geq 1 + \varepsilon,\]
there are positive parameters $L,k,t_0,\varepsilon'$, and $\gamma$, that depend
only on $c$ and $\varepsilon$, such that the following holds.  Sample a graph
$G = (V,E)$ according to the $\SWGm(n, c/n)$ distribution and let $G_{p_1, p_2}$ be the
percolation graph of $G$ with percolation probability $p_1, p_2$. Run
Algorithm~\ref{alg:L-visit} on input $(G,G_{p_1, p_2},s,D_0)$: if $n$ is sufficiently large,
for every $t$ larger than $t_0$, at the end of the $t$-th iteration of the
while loop it holds that 
\[
\Prc{|R \cup Q| \geq n/k \mbox{ \emph{OR} } |Q| \geq \varepsilon' t} \geq
\gamma \, .
\]
\end{lemma}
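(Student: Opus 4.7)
The plan is to follow the proof of Lemma~\ref{le:L-visit} essentially verbatim, changing only the expected-increment calculation at the heart of the Galton-Watson domination argument. The stochastic-domination framework, the variance bound, the coupling construction, and the final appeal to Lemma~\ref{lm:bp.lowerbound} are all insensitive to whether edges percolate homogeneously or not; what changes is only the mean of the random variable generating the branching process.

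First I would record Fact~\ref{lemma:local_cluster_two_prob}, which gives the expected size of an $L$-truncated local cluster under local percolation probability $p_1$; its proof is identical to that of Fact~\ref{fa:exp_loc_ub} with $p$ replaced by $p_1$. Next I run Algorithm~\ref{alg:L-visit} on input $(G, G_{p_1,p_2}, s, D_0)$ and, as in the original proof, fix a constant $k>1$ and let $\tau$ be the first time at which $|Q_t \cup R_t \cup D_t|$ exceeds $n/k$. For $t < \tau$ with $|Q_{t-1}| > 0$, let $A$ be the set of nodes at ring distance at least $L+1$ from all previously touched vertices, so that $|A| \geq n(1 - 4L/k)$. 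For each $x_i \in A$, let $X_i$ count the nodes added through $x_i$. By deferred decisions, the bridge $\{w,x_i\}$ is present in $G_{p_1,p_2}$ independently with probability $(c/n)\cdot p_2 = p_2 c/n$. The ``free'' event further requires that none of the at most $4L$ candidates in $A$ within ring distance $2L$ of $x_i$ be a bridge neighbor of $w$, contributing a factor $(1 - p_2 c/n)^{4L}$. Conditional on freeness, the contribution equals $|LC^L(x_i)|$, whose expectation is $\frac{1 + p_1 - 2 p_1^{L+1}}{1-p_1}$.

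Summing over $x_i \in A$ yields
\[
\Expcc{Z_t \mid |Q_{t-1}|>0, \tau > t} \geq \frac{p_2 c (1+p_1)}{1-p_1}\left(1 - \bigO(p_1^L) - \bigO(L/k) - \bigO(L/n)\right).
\]
A one-line algebraic check shows that $\frac{p_2 c (1+p_1)}{1-p_1} = 1$ is equivalent to $p_1 + c p_2 + c p_1 p_2 = 1$, so the critical threshold matches precisely the hypothesis of the lemma. Under $p_1 + c p_2 + c p_1 p_2 \geq 1 + \varepsilon$, we may therefore choose constants $L$ and $k$ (depending only on $c$ and $\varepsilon$) so that the conditional expectation above exceeds $1 + \varepsilon'$ for some $\varepsilon' > 0$.

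From this point the original argument applies unchanged: the variance bound $\Varcc{Z_t \mid \cdots} \leq 4L^2$ follows because each dequeued node has at most $\Bin(n, p_2 c/n)$ bridge neighbors in $G_{p_1,p_2}$, each of which contributes at most $2L+1$ nodes; we define a Galton-Watson process $\{B_t\}$ whose step variable has mean $\geq 1+\varepsilon'$ and variance $\bigO(L^2)$, couple it to $\{|Q_t|\}$ exactly as in Lemma~\ref{le:L-visit}, and conclude via Lemma~\ref{lm:bp.lowerbound}. The only genuinely new work is the algebraic identification of the threshold, which is the main ``obstacle'' only in the sense of bookkeeping; no new probabilistic ideas are required.
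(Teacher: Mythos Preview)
Your proposal is correct and follows exactly the approach the paper takes: the paper does not give a standalone proof of this lemma but simply indicates that it is obtained by rerunning the proof of Lemma~\ref{le:L-visit} with bridge-percolation probability $p_2$ and ring-percolation probability $p_1$, and your derivation of the expected increment $\frac{p_2 c(1+p_1)}{1-p_1}$ together with the algebraic identification of the threshold $p_1 + c p_1 p_2 + c p_2 = 1$ is precisely that adaptation. The remaining steps (variance bound, coupling, appeal to Lemma~\ref{lm:bp.lowerbound}) are, as you note, unchanged.
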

 
Lemma \ref{le:L-visit_two_prob} implies that the nodes visited by the end of Algorithm~\ref{alg:L-visit} reach a size at least $n/k$, with a probability of at least $\gamma$. The consequence is a linear lower bound on the size of the connected component of the source $s$ in $G_p$.

As we made for the homogenous case,
the next goal is to show that if we explore the connected components of $\log n$ nodes taken arbitrarily in the graph, then \emph{w.h.p.}, we visit a linear fraction of the nodes in the percolated graph, within $\Theta(\log n)$ number of hops. To do this, we analyze the execution of Algorithm~\ref{alg:par_L-visit} on input $(G, G_{p_1, p_2}, I_0,D_0)$, where $I_0$ is an arbitrary subset of initiators and $D_0 \subseteq V \setminus I_0$ is a set of deleted nodes. 
Recall that $S_t = V \setminus ( R_t\cup Q_t)$, where $Q_t$ and $R_t$ respectively are the subsets $Q$ and $R$ at the end of the $t$-iteration of the while loop in
line~$10$. We can thus state the new version  Lemma \ref{thm:exponential_growth}.

\begin{lemma}\label{thm:exponential_growth_two_prob}
Let $V$ be a set of $n$ nodes, $I_0 \subseteq V$ a set of initiators and $D_0 \subseteq V \setminus I_0$ a set of deleted nodes such that $|D_0| \leq \log^4 n$. For every
$\varepsilon>0$, $c>0$ and for every contagion probabilities $p_1, p_2$ such that
\[
 p_1 + c \cdot p_1 p_2 + c \cdot p_2 \geq 1 + \varepsilon
\]
there are positive parameters $L,k,\beta,\delta$ that depend only on $c$ and
$\varepsilon$ such that the following holds. Sample a graph $G=(V,E)$ according
to the \SWG distribution, and let $G_{p_1, p_2}$ be the percolation
graph of $G$ with parameters $p_1, p_2$. Run
Algorithm~\ref{alg:par_L-visit} on input $(G,G_{p_1, p_2},I_0,D_0)$: in every iteration $t
\geq 1$ of the while loop at line \ref{line:parvisit:while} in Algorithm~\ref{alg:par_L-visit}, for
every integer $i \geq \beta \log n$ and $r \geq 0$ such that $i+r\leq n/k$:

\begin{equation}
    \Prc{|{Q}_{t}|\geq (1+\delta)i \mid \condpar}\geq 1-\frac{1}{n^2} \, 
    .
    \label{eq:lem:exp_growth_two_prob}
\end{equation}
\end{lemma}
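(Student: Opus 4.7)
The plan is to mirror the proof of Lemma~\ref{thm:exponential_growth} almost verbatim, substituting the pair $(p_1,p_2)$ for the single percolation probability $p$ wherever it appears. The key structural observation is that the definition of \good node (Definition~\ref{def:good_nodes_par}) and all associated geometric quantities---namely $|A| \geqslant n(1 - 4L/k)$ from~\eqref{eq:A} and the maximum degree $8L$ of the dependency graph $\Gamma$ on the indicators $\{Y(x)\}_{x\in A}$---depend only on ring distances $L$ and $2L$, and so are insensitive to how the percolation probability is split between local edges and bridges. These ingredients therefore carry over unchanged.

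The quantitative recomputation concerns the conditional expectations. A candidate node $x \in A$ becomes a \good bridge-neighbor of $Q_{t-1}$ precisely when some bridge of $G_{p_1,p_2}$ links $x$ to $Q_{t-1}$ and no bridge of $G_{p_1,p_2}$ links a node within ring distance $2L$ from $x$ to that same set. Since each potential bridge lies in $G_{p_1,p_2}$ with independent probability $p_2 c/n$, repeating the derivation of Fact~\ref{fact:exp_value_Y} yields
\begin{equation*}
    i \cdot c \cdot p_2 \cdot \Bigl(1 - \tfrac{8L}{k}\Bigr) \;\leqslant\; \Expcc{|Y_t| \mid \condpar} \;\leqslant\; i \cdot c \cdot p_2 \, ,
\end{equation*}
and combining this with Fact~\ref{lemma:local_cluster_two_prob} via Wald's identity (Lemma~\ref{lem:Wald-equation}) gives an expansion factor
\begin{equation*}
    \frac{c\, p_2 (1+p_1)}{1-p_1}\Bigl(1 - \tfrac{2 p_1^{L+1}}{(1-p_1)(1+p_1)}\Bigr)\Bigl(1 - O(L/k)\Bigr).
\end{equation*}
For $L$ and $k$ large enough (depending only on $c$ and $\varepsilon$), this exceeds $1$ by a positive constant $\delta = \delta(c,\varepsilon)$ exactly when $c p_2 (1+p_1) > 1 - p_1$, which rearranges to the hypothesis $p_1 + c p_1 p_2 + c p_2 \geqslant 1 + \varepsilon$.

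For the high-probability bound, I would re-apply Theorem~\ref{thm:svante} to $|Y_t| = \sum_{x\in A} Y(x)$ exactly as in Lemma~\ref{lemma:Y_t__not_too_small}, using the updated marginal $f \leqslant i p_2 c/n$ and the same dependency graph, obtaining $|Y_t| \geqslant c p_2 i (1 - 9L/k)$ with probability at least $1 - n^{-3}$ provided $\beta$ is sufficiently large. Conditioning on this event, $|Q_t|$ dominates a sum of at least $c p_2 i (1 - 9L/k)$ i.i.d.\ $L$-truncated local-cluster sizes, each bounded in $[1, 2L+1]$ with expectation given by Fact~\ref{lemma:local_cluster_two_prob}; Hoeffding's inequality (Theorem~\ref{thm:hoeff}) then upgrades the expectation bound to $|Q_t| \geqslant (1+\delta) i$ with probability at least $1 - n^{-2}$. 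I do not foresee a substantive technical obstacle, since the argument is algebraically identical once one carefully associates $p_1$ with each local (ring) event and $p_2$ with each bridge event; the implicit smallness assumptions of the original proof (in particular $p_2 c = O(1)$) remain valid, so every step transfers without modification.
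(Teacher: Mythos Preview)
Your proposal is correct and matches the paper's approach: the paper does not give a separate proof of this lemma but explicitly states that it is obtained by rerunning the proof of Lemma~\ref{thm:exponential_growth} with $p_2$ governing bridge events and $p_1$ governing local-cluster events (via Fact~\ref{lemma:local_cluster_two_prob}), which is precisely what you have done. Your identification of the threshold condition $c\,p_2(1+p_1)/(1-p_1)>1 \Leftrightarrow p_1 + c\,p_1 p_2 + c\,p_2 > 1$ is the key algebraic check, and the concentration steps (Theorem~\ref{thm:svante} for $|Y_t|$, Hoeffding for the local-cluster sum) transfer verbatim as you describe.
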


At this point, we run  Algorithm~\ref{alg:union_L-visit} in the non-homogeneous framework and get the new version of \ref{thm:bootstrap}.

\begin{lemma}
\label{thm:bootstrap_two_prob}
Let $V$ be a set of $n$ nodes and $I_0 \in V$ a set of \textit{initiators}.
For every $\varepsilon>0$, $c>0$ and for every contagion
probabilities $p_1, p_2$ such that 
\[
 p_1 + c \cdot p_1 p_2 + c \cdot p_2 \geq 1 + \varepsilon
\]
there are positive parameters $L,k,\gamma,\beta$ that depend only on $c$ and
$\varepsilon$ such that the following holds. Sample a graph $G = (V,E)$
according to the $\mathcal{SWG}(n,c/n)$ distribution, and let $G_{p_1, p_2}$ be the
percolation graph of $G$ with parameters $p_1, p_2$. Run
Algorithm~\ref{alg:union_L-visit} on input $(G,G_{p_1, p_2},I_0)$ and, if $n$ is sufficiently
large:
\begin{enumerate}
    \item The first while loop in line \ref{line:totvisit:firstwhile} terminates at
some round $\tau_1 =\Theta(\log n)$ in which \[\Prc{|R \cup Q| \geq n/k \mbox{
\emph{OR} } |Q| \geq \beta \log n} \geq \gamma;\]
\item Conditioning at the above event, the second while loop in line \ref{line:totvisit:secondwhile} terminates at some round $\tau_2 = \Theta(\log n)$ in which, w.h.p. $|Q \cup R| \geq n/k$.
\end{enumerate} 
\end{lemma}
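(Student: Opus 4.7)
The plan is to follow precisely the structure of the proof of Lemma \ref{thm:bootstrap} (the homogeneous case) and substitute the non-homogeneous analogues Lemma \ref{le:L-visit_two_prob} and Lemma \ref{thm:exponential_growth_two_prob} for the lemmas invoked there. Since Algorithm \ref{alg:union_L-visit} is unchanged, and its two phases (sequential bootstrap followed by parallel expansion) are governed independently by these two technical results, this substitution-style argument should go through with only bookkeeping modifications. The change from the homogeneous branching parameter $pc(1+p)/(1-p)$ to the effective parameter $p_1 + c p_1 p_2 + c p_2$ is fully absorbed into the statements of those two lemmas, and so the combinatorial argument on top of them is essentially insensitive to the change.

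For Claim 1, I will apply Lemma \ref{le:L-visit_two_prob} to the first while loop of Algorithm \ref{alg:union_L-visit}, which is exactly the sequential $L$-visit. Choosing the iteration count $t$ so that $\varepsilon' t = \beta \log n$ (with $\varepsilon'$ the constant produced by Lemma \ref{le:L-visit_two_prob}), the lemma yields directly that, after $\tau_1 = \Theta(\log n)$ iterations, with probability at least $\gamma$ either $|R \cup Q| \geq n/k$ or $|Q| \geq \beta \log n$. If the first alternative holds, Claim 2 is vacuous; otherwise we proceed to the parallel phase with a queue of size at least $\beta \log n$.

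For Claim 2, I will condition on the event from Claim 1 and analyze the second while loop, which is Algorithm \ref{alg:par_L-visit}. Applying Lemma \ref{thm:exponential_growth_two_prob} inductively over iterations $t=1,2,\ldots$, as long as $|Q_{t-1}| + |R_{t-1}| \leq n/k$ and $|Q_{t-1}| \geq \beta \log n$, the queue size multiplies by at least $(1+\delta)$ with probability at least $1 - 1/n^2$. A chain-rule argument, identical to the one used in the parallel analysis of the homogeneous case, shows that after $\tau_2 = O_{\delta}(\log n) = \Theta(\log n)$ iterations the threshold $|Q \cup R| \geq n/k$ is surpassed, and a union bound over those $O(\log n)$ iterations gives the w.h.p.\ conclusion.

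The main delicacy, as in the homogeneous proof, is verifying that the hypotheses of the technical lemmas remain valid throughout the execution: in particular, the condition $|D_0| \leq \log^4 n$ on the deleted set at each call, and the joint consistency of the constants $L,k,\beta,\gamma,\delta$ produced by Lemmas \ref{le:L-visit_two_prob} and \ref{thm:exponential_growth_two_prob}. These concerns are addressed exactly as in the homogeneous case: the deleted set only grows by ring-local neighborhoods of already-visited nodes, and we stop as soon as $|Q \cup R| \geq n/k$, so the relevant bound on $|D|$ is preserved at every iteration to which we apply Lemma \ref{thm:exponential_growth_two_prob}; moreover, all constants depend only on $c$ and $\varepsilon$, so the most restrictive joint choice can be taken uniformly. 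I do not anticipate any genuinely new obstacle beyond what was already treated in the homogeneous analysis; the entire novelty of the non-homogeneous setting has been isolated into the proofs of Lemmas \ref{le:L-visit_two_prob} and \ref{thm:exponential_growth_two_prob}, which are assumed here.
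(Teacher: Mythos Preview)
Your proposal is correct and matches the paper's approach exactly: the paper does not give a separate proof of Lemma~\ref{thm:bootstrap_two_prob}, treating it as a direct transcription of the proof of Lemma~\ref{thm:bootstrap} with Lemmas~\ref{le:L-visit_two_prob} and~\ref{thm:exponential_growth_two_prob} substituted for their homogeneous counterparts. Your write-up is in fact more detailed than what the paper provides.
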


As in the homogeneous case, this result implies that, starting from a single source $s \in V$, the algorithm will visit $\Omega(n)$ nodes with constant probability. On the other hand, starting from a set of sources $I_0$ such that $|I_0|=\Omega(\log n)$, the algorithm will reach $\Omega(n)$ nodes, \emph{w.h.p}. This concludes the proof of  Claim 1 of Theorem \ref{thm:gen_overthershold_two_prob}.

\paragraph*{Proof  of Claim II of Theorem \ref{thm:gen_overthershold_two_prob}}
To prove Claim 2 of Theorem \ref{thm:gen_overthershold_two_prob}, we  state Lemma \ref{thm:below_th} to the two-probabilities case.
\begin{lemma} \label{thm:below_th_two_prob}
Let $V$ be a set of $n$ nodes. For every $\varepsilon>0$, $c>0$ and for every
transmission probabilities $p_1, p_2 > 0$ such that
\[
 p_1 + c \cdot p_1 p_2 + c \cdot p_2 \leq 1 - \varepsilon
\]
there is a positive constant $\beta$ that depends only on $c$ and $\varepsilon$
such that the following holds. Sample a graph $G=(V,E)$ according to the
\SWG distribution, and let $G_{p_1, p_2}$ be the percolation graph of
$G$ with parameters $p_1, p_2$. If $n$ is sufficiently large, with probability at least
$1 - 1/n$ contains no connected component of size exceeding $\beta \log n$.
\end{lemma}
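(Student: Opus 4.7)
The strategy mirrors the proof of Lemma~\ref{thm:below_th} in Subsection~\ref{ssec:proofbelowth}: I will run the BFS procedure of Algorithm~\ref{alg:upper_bound} from an arbitrary source $s\in V$ on the percolation graph $G_{p_1,p_2}$ and dominate it by a suitable Galton--Watson branching process. The only real change with respect to the homogeneous case is that the two percolation probabilities $p_1,p_2$ play disjoint roles: $p_2$ governs whether each potential bridge is activated, while $p_1$ governs the local cluster explored from every discovered bridge neighbor.

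Concretely, I define the offspring variable $W$ as in Definition~\ref{def:zeta} but with two parameters: first sample $Y\sim\Bin(n,p_2c/n)$, representing an upper bound on the number of unvisited nodes reached by a percolated bridge from the dequeued node; then let $W=Y+\sum_{j=1}^{2Y}L_j$, where the $L_j$ are i.i.d.\ copies of a geometric variable counting successes in i.i.d.\ Bernoulli($p_1$) trials until the first failure (one on each side of each entry point of a local cluster, exactly as in Subsection~\ref{ssec:localclusters}). Applying the principle of deferred decisions to the percolation events of bridges (parameter $p_2$) and of ring edges (parameter $p_1$), and using Fact~\ref{fa:exp_loc_ub} on the local cluster size, the coupling argument of Fact~\ref{le:ub_dom} extends verbatim, yielding $\Prc{B_t\ge x}\ge\Prc{|Q_t|\ge x}$ for every $t,x$, where $\{B_t\}_{t\ge 0}$ is the Galton--Watson process generated by $W$.

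Next I compute
\[
\Expcc{W}=\Expcc{Y}\left(1+\frac{2p_1}{1-p_1}\right)=p_2c\cdot\frac{1+p_1}{1-p_1},
\]
and observe that the hypothesis $p_1+cp_1p_2+cp_2\le 1-\varepsilon$ is equivalent to $p_2c(1+p_1)\le(1-p_1)-\varepsilon$, hence $\Expcc{W}\le 1-\varepsilon/(1-p_1)\le 1-\varepsilon$. Therefore I may reproduce Lemma~\ref{le:ch_geom} essentially unchanged: I split $W$ into the ``bridge'' part $\sum_i Y_i$ and the ``local cluster'' part $\sum_j L_j$, apply a multiplicative Chernoff bound to the first (a sum of $nt$ independent Bernoullis of parameter $p_2c/n$, with the parameter $p_2c$ replacing $pc$), and translate the tail of the sum of the $L_j$'s into a lower tail of a binomial with success probability $1-p_1$ via the standard negative-binomial/binomial duality used in~\eqref{eq:L_j_B_j}. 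Choosing $t=\beta\log n$ for a sufficiently large constant $\beta=\beta(c,\varepsilon)$ and some constant $\delta$ small enough that $(1+2\delta)\Expcc{W}\le 1-\varepsilon/2$, I obtain $\Prc{\sum_{i=1}^t W_i\ge t}\le n^{-2}$.

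As in the homogeneous case, this implies $\Prc{B_t>0}\le n^{-2}$ for $t=\beta\log n$, and the stochastic dominance gives $\Prc{|Q_t|>0}\le n^{-2}$, i.e., the BFS from $s$ visits at most $\beta\log n$ nodes with probability at least $1-n^{-2}$. A union bound over all $n$ possible choices of the source $s\in V$ yields that, with probability at least $1-1/n$, no connected component of $G_{p_1,p_2}$ has size exceeding $\beta\log n$. The only step requiring genuine care is matching the critical condition: one must check that the hypothesis $p_1+cp_1p_2+cp_2\le 1-\varepsilon$ translates into a constant deficit of the form $\Expcc{W}\le 1-\varepsilon'$ uniformly in $p_1,p_2\in[0,1]$, and that the Chernoff/negative-binomial concentration holds uniformly in that regime; this is what I verified above through the identity $\Expcc{W}-1=(cp_2+cp_1p_2+p_1-1)/(1-p_1)$.
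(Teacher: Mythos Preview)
Your proposal is correct and follows essentially the same route as the paper: the paper too extends Definition~\ref{def:zeta} by taking $Y\sim\Bin(n,p_2c/n)$ and the $L_j$ geometric with success probability $p_1$, states the analogue of Lemma~\ref{le:ch_geom} (with the concentration error $2e^{-\delta^2\min\{p_1,p_2\}^3ct/9}$), and otherwise reuses the domination of Fact~\ref{le:ub_dom} and the final union bound unchanged. Your explicit verification that the hypothesis $p_1+cp_1p_2+cp_2\le 1-\varepsilon$ is exactly $\Expcc{W}-1=(cp_2+cp_1p_2+p_1-1)/(1-p_1)\le -\varepsilon/(1-p_1)$ is a helpful addition that the paper leaves implicit.
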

The proof is a simple generalization of the proof of Lemma \ref{thm:below_th}. In particular,  we need the following   version of  Lemma \ref{le:ch_geom} that considers a Galton-Watson process $\{B_t\}_{t\geq 0}$ with $W_1, W_2,\ldots$, defined by extending Definition \ref{def:zeta} to the two-probabilities case.

\begin{lemma}\label{le:ch_geom_two_prob}
Let $\{B_t\}_{t\geq 0}$ be the Galton-Watson process described above. For any $t>0$, 
\begin{align}
\label{eq:le:ch_geom_two_prob}
	&\Prc{\sum_{i=1}^t W_i \ge (1 + 2\delta) \frac{p_2(1+p_1)}{1-p_1}ct}\le 2e^{-\frac{\delta^2\min{\{p_1, p_2\}}^3ct}{9}}
\end{align}
\end{lemma}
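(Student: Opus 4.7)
The plan is to mirror the proof of Lemma~\ref{le:ch_geom} step by step, tracking how each parameter specializes when bridges percolate with probability $p_2$ and local edges percolate with probability $p_1$. First I would rewrite the definition of the $W_i$'s in the spirit of Definition~\ref{def:zeta}: each $W_i$ is generated by drawing $Y_i \sim \Bin(n, c p_2 / n)$ (the number of bridge-neighbors surviving percolation) and then setting $W_i = Y_i + \sum_{j=1}^{2Y_i} L_j$, where each $L_j$ is the number of successes before the first failure in an i.i.d.\ sequence of Bernoulli$(p_1)$ trials. This is consistent both with Fact~\ref{lemma:local_cluster_two_prob} (local clusters are governed by $p_1$, with expected size $(1+p_1)/(1-p_1)$) and with the two-probability analogue of the dominance argument in Fact~\ref{le:ub_dom}.

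Next I would follow the same two-step decomposition as in~\eqref{eq:zeta_2}: set $Y = \sum_{i=1}^t Y_i$ and rewrite $\sum_{i=1}^t W_i = Y + \sum_{j=1}^{2Y} L_j$, then bound each term separately. The first term $Y$ is a sum of $nt$ independent Bernoulli$(c p_2/n)$ variables with mean $c p_2 t$, so a Chernoff bound yields $\Prc{Y \geq (1+\delta) c p_2 t} \leq \exp(-\delta^2 c p_2 t / 3)$ for $\delta \in (0,1)$. For the second term I would condition on $\{Y \leq (1+\delta) c p_2 t\}$ and invoke the negative-binomial/binomial duality used in~\eqref{eq:L_j_B_j}: the event $\sum_{j=1}^{2y} L_j > x$ is equivalent to ``$2y + x$ independent Bernoulli$(1-p_1)$ trials produce fewer than $2y$ successes.''

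The critical computation is then the expectation of this Bernoulli$(1-p_1)$ sum, which works out to
\[
\mu \;=\; 2(1+\delta)\,c p_2\,t \cdot \left(1 + \frac{\delta p_1}{1+\delta}\right) \;\geq\; 2 c p_2 t.
\]
Hence $\tfrac{2(1+\delta) c p_2 t}{\mu} \leq 1 - \tfrac{\delta p_1}{3}$, and a standard Chernoff lower-tail bound gives
\[
\Prc{\sum_{j=1}^{2Y} L_j > (1+2\delta)\frac{2 c p_1 p_2}{1-p_1} t \;\Big|\; Y \leq (1+\delta) c p_2 t} \;\leq\; \exp\!\left(-\frac{\delta^2 p_1^2 p_2 c t}{9}\right).
\]
A union bound over the two tail events, together with the identity $c p_2 + \tfrac{2 c p_1 p_2}{1-p_1} = \tfrac{c p_2 (1+p_1)}{1-p_1}$, will yield the claimed inequality with exponent $\delta^2 \min\{p_1, p_2\}^3 c t / 9$, since $p_1, p_2 \in (0,1]$ gives $p_1^2 p_2 \geq \min\{p_1, p_2\}^3$ and the same bound trivially dominates the $Y$-contribution.

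The main obstacle will be the careful bookkeeping inside the negative-binomial step: one must verify that the deviation constant fed into the Chernoff bound is proportional to $\delta p_1$ rather than simply $\delta$, otherwise the dependence of the exponent on $p_1$ (and hence on $\min\{p_1,p_2\}$) would be lost. Aside from this routine but delicate computation, every other step is a direct specialization of the derivations in the proof of Lemma~\ref{le:ch_geom}, and no new concentration tools are required.
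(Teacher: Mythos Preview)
Your proposal is correct and follows essentially the same approach as the paper: rewrite $\sum W_i$ as $Y+\sum_{j=1}^{2Y}L_j$ with $Y\sim\Bin(nt,cp_2/n)$ and $L_j$ geometric with parameter $p_1$, bound the $Y$-tail by Chernoff, and handle the $L_j$-sum via the negative-binomial/binomial duality conditioned on $\{Y\le(1+\delta)cp_2t\}$. Your intermediate exponent $\delta^2 p_1^2 p_2 ct/9$ is in fact the one that actually falls out of the Chernoff computation (the paper records $\delta^2 p_1^3 ct/9$, which looks like a carry-over from the homogeneous case); either form is $\ge \delta^2\min\{p_1,p_2\}^3 ct/9$, so the final claim follows identically.
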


The above bound is obtained as follows. We recall that:  $Y = \sum_{i=1}^t Y_i$, with each $Y_i$ being an (independent) Binomial
variable with distribution $\text{Bin}(n,p_2c/n)$; each $L_j$ is an (independent) variable that counts the number of successes until the first failure with success
probability $p_1$; and each $\check{B}_i$ is an (independent) Bernoulli random variable with \textit{success} probability $1-p_1$.
Proceeding as in the homogeneous case, we get

\[\Prc{Y \geq (1+ \delta) pct}\le e^{-\frac{\delta^2}{3}p_2ct} \]
and 
\begin{align*}
    & \Prc{\sum_{j=1}^{2(1+\delta)p_2ct}L_j > (1 + 2\delta)\frac{2p_1p_2}{1-p_1}ct} = \\
    &= \Prc{\sum_{i=1}^{2(1+\delta)p_2ct+(1 + 2\delta)\frac{2p_1p_2}{1-p_1}ct}\check{B}_i < 2(1+\delta)p_2ct} \leq e^{-\frac{\delta^2 
	p_1^3ct}{9}}
\end{align*} 

At this point, generalization follows easily.

As in the homogeneous case, we exploit the full equivalence between the bond percolation process and the IC-process: so, Lemma \ref{thm:below_th_two_prob} also implies Claim 2 of Theorem \ref{thm:gen_overthershold}. In fact, since the IC process infects at least one new node in each round unless it has died out, Lemma \ref{thm:below_th_two_prob} also implies that, \emph{w.h.p.}, the IC process dies out within $\beta\log n$ rounds, infecting at most $|I_0| \cdot \beta\log n$ new nodes.

\subsection{Non-unit activity periods.}
In the previous sections, we assumed that each infectious node has one single chance to infect its neighborhood in the step immediately following the one in which it became infected. Natural generalizations include models where the interval of time during which a node is infectious follows some distribution. While this can considerably complicate the analysis, our approach straightforwardly extends to a simple generalization, in which the activity period of a node consists of $k$ consecutive units of time, where $k$ is a fixed constant. In this case, the corresponding versions of the epidemic models we considered in this paper can be easily formalized as follows.

\begin{definition}[\IC and \SIR models with $k$ attempts] \label{def:independent_cascade_attempts}
Given a graph $G=(V,E)$, an assignment of \emph{contagion probabilities} $\{
p(e) \}_{e\in E}$ to the edges of $G$, and a non-empty set $I_0 \subseteq V$
of initially infectious vertices (that will also be called {\em initiators} or
{\em sources}), the {\em Independent Cascade} (for short, \IC) protocol \emph{with
$k$ attempts} is the  stochastic process $\{S_t,I_t,R_t\}_{t\geq 0}$, where
$S_t,I_t,R_t$ are three sets of vertices, respectively called susceptible,
infectious, and recovered, which form a partition of $V$ and that are defined
as follows. Let $\hat{I}_t\subseteq I_t$ be the subset of  those  nodes which
receive the infection for the first time at step $t$.

\begin{itemize}
     
     \item At time $t=0$ we have $R_0 = \emptyset$ and $S_0 = V-I_0$. We set
     $\hat{I}_{-k}=\dots=\hat{I}_0=\emptyset$.
     
     \item At time $t\geq 1$:
     \begin{itemize}
         
         \item $R_t = R_{t-1} \cup \hat{I}_{t-k}$, that is, the nodes that got
         infected $k$ steps before become recovered.
         
         \item Independently from the previous steps, for each edge $e=\{
         u,v\}$ such that $u\in I_{t-1}$ and $v\in S_{t-1}$, with probability
         $p(e)$ the event that ``$u$ transmits the infection to $v$ at time
         $t$'' takes place. The set $\hat{I}_t$ is the set of all vertices
         $v\in S_{t-1}$ such that, for at least one neighbor $u\in I_{t-1}$,
         the event   ``$u$ transmits the infection to $v$'' takes place. We set
         $I_t=(I_{t-1}\cup \hat{I}_t)\setminus \hat{I}_{t-k}$.
         
         \item $S_t = S_{t-1} - \hat{I}_t$
     \end{itemize}
 \end{itemize}
The process stabilizes when $I_t = \emptyset$.
\end{definition}

We recall that the RF (\SIR) protocol is the special case of the \IC protocol in which all probabilities are the same. To analyze the process described above, we use the following result, which is a direct consequence of Theorem \ref{thm:equivalence} that states the equivalence between the Independent Cascade process and the percolation process.

\begin{cor}[\cite{KKT15}] \label{cor:delays_k}
Let $R_{\infty}$ be the final set of nodes reached by the \IC process with $k$
attempts, according to above definition, on the graph $G = (V,E)$ and contagion
probabilities $\{p(e)\}_{e \in E}$. Let $\hat{R}_{\infty}$ be the final set of
nodes reached by the \IC process and on the same graph $G$,  with only one
activation and  with contagion probabilities $\{\hat{p}(e)\}_{e \in E}$, where
$\hat{p}(e)=1-(1-p(e))^k$. Then, $R_\infty$ and $\hat{R}_\infty$ have the same
distribution.
\end{cor}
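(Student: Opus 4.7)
The plan is to reduce the corollary to the standard bond percolation equivalence (Theorem \ref{thm:equivalence}) by exhibiting a coupling between the $k$-attempt IC process and an appropriately defined random subgraph. The key observation is that a $k$-attempt transmission along an edge $e$ succeeds, as far as the eventual infection status of the endpoints is concerned, if and only if at least one of $k$ i.i.d. Bernoulli($p(e)$) trials comes up $1$, which occurs with probability exactly $\hat{p}(e) = 1 - (1 - p(e))^k$.

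Concretely, for each edge $e \in E$, I would introduce $k$ mutually independent Bernoulli($p(e)$) random variables $Y_e^{(1)}, \ldots, Y_e^{(k)}$, taken to be independent across edges as well. By the principle of deferred decisions, the $k$-attempt process of Definition \ref{def:independent_cascade_attempts} can be realized by declaring that if $u$ is the first of the two endpoints of $e$ to become infected at some time $t_u$, then the $i$-th attempt from $u$ on the other endpoint $v$ (made at time $t_u + i$ provided $v$ is still susceptible) is successful exactly when $Y_e^{(i)} = 1$. Now define $X_e = 1 - \prod_{i=1}^{k}(1 - Y_e^{(i)})$. Then $\{X_e\}_{e \in E}$ are mutually independent Bernoulli random variables with $\Prc{X_e = 1} = \hat{p}(e)$, so the random subgraph $G' = (V, \{e \in E : X_e = 1\})$ has exactly the distribution of the percolation graph used to define $\hat{R}_\infty$.

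The core step of the argument is the identity, under the above coupling,
\[
R_\infty \;=\; N^*_{G'}(I_0),
\]
that is, the set of nodes eventually infected in the $k$-attempt process equals the set of nodes reachable from $I_0$ in $G'$. The inclusion $R_\infty \subseteq N^*_{G'}(I_0)$ is immediate: every new infection in the process is realized by a successful attempt along some edge $e$, which forces $X_e = 1$, so the chain of infections yielding any $v \in R_\infty$ is a path in $G'$. For the reverse inclusion, I would argue by induction along a path $u_0, u_1, \ldots, u_\ell = v$ in $G'$ with $u_0 \in I_0$. Assuming $u_i$ becomes infected at time $t_i$, during its activity window $\{t_i+1, \ldots, t_i+k\}$ one of two things happens: either $u_{i+1}$ is already infected or becomes infected via some other route (in which case $u_{i+1} \in R_\infty$ directly), or $u_{i+1}$ remains susceptible for the entire window, in which case the attempts $Y_e^{(1)}, \ldots, Y_e^{(k)}$ on $e = \{u_i, u_{i+1}\}$ are all drawn, and since $X_e = 1$ at least one of them succeeds, again forcing $u_{i+1} \in R_\infty$.

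The main subtlety, and the step I expect to require the most care, is verifying that the coupling above faithfully reproduces the law of the $k$-attempt process: the $k$ attempts along each edge must be i.i.d.\ Bernoulli($p(e)$) and independent across edges, exactly as in Definition \ref{def:independent_cascade_attempts}. This is essentially bookkeeping with the principle of deferred decisions---every time step draws a fresh independent Bernoulli for each active-susceptible pair, and we may pre-sample these and index them by the edge and attempt number---but it must be written out carefully because at most one endpoint of $e$ is ever the infecting party, so only the $k$ trials in a single direction are ever consulted. Once the coupling is justified, we conclude that $R_\infty \stackrel{d}{=} N^*_{G'}(I_0)$; applying Theorem \ref{thm:equivalence} with percolation probabilities $\{\hat{p}(e)\}_{e \in E}$ and the same initiator set $I_0$ yields $N^*_{G'}(I_0) \stackrel{d}{=} \hat{R}_\infty$, which completes the proof.
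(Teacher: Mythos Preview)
Your proposal is correct and follows the natural coupling argument via deferred decisions. Note that the paper does not actually supply a proof of this corollary: it is stated with a citation to \cite{KKT15} and described as ``a direct consequence of Theorem~\ref{thm:equivalence}.'' Your write-up is precisely the expected unpacking of that one-line justification---pre-sample $k$ i.i.d.\ Bernoulli$(p(e))$ coins per edge, collapse them into a single indicator $X_e$ with success probability $\hat p(e)$, and identify $R_\infty$ with the reachable set in the resulting percolation graph---so there is no meaningful methodological difference to report.
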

Thanks to the above equivalence result, our results stated in Theorems~\ref{thm:gen_overthershold} and~\ref{thm:main_matching} (and the general result in Theorem~\ref{th:reg.upper}) can be easily generalized to the \SIR model with $k$ activations by setting the contagion probability to the value $\hat{p}=1-(1-p)^k$. As for the convergence time of the \SIR process, we observe that the  $k$ consecutive attempts of every infectious node clearly result in a slow-down of (at most) an extra multiplicative factor $k$ with respect to the obtained bounds.

\paragraph*{Random incubation periods.} 
Our results easily extend to a discrete, SEIR generalization of the epidemic model studied in this paper in which every node has an associated, random {\em incubation period}. In more detail, in our discrete-time setting, each node $v$ has an associated random variable  $h(v)$, which gives the number of incubation steps after which, once infected, node $v$ becomes infectious itself. This model can be reduced to a percolation problem in which the activation of edges is as before, each node $v$ is labeled by $h(v)$, the set of  nodes reached by the infection is the set of nodes reachable from $I_0$ in the percolation graph. In this case, the time of contagion of a node $v$ is the length of the shortest path from $v$ to $I_0$ in the percolation graph, where the ``length'' of a path $P$ is the number of edges plus the sum of the incubation times of the vertices along the path.

If the incubation times $h(v)$ are independent of the randomness of the activation of edges, then incubation does not affect the number of nodes eventually reached by the infection, it only affects the time of spreading. 

Moreover, if the incubation times are also mutually independent random variables with a nice (for example, subgaussian) tail, then it is also possible to get bounds in probability for the infection spreading time.

%The epidemic models studied in this paper can be further generalized by adding to the model a non-zero {\em incubation period} for infected nodes before they start to be contagious. In our discrete-time setting, we %can associate to each node $v$
%a random variable  $h(v)$ which gives the number of incubation steps of node $v$. This model can be reduced to a percolation problem in which the activation of edges is as before, each node $v$ is labeled by $h(v)$, %the set of  nodes reached by the infection is the set of nodes reachable from $I_0$ in the percolation graph, and the time of contagion of a node $v$ is the length of the shortest path from $v$ to $I_0$, where the %``length'' of a path is the number of edges plus the sum of the incubation times of the vertices along the path.

%If the incubation times $h(v)$ are independent of the randomness of the activation of edges, then incubation does not affect the number of nodes eventually reached by the infection, and it only affect the time of %spreading. 

%If the incubation times are also mutually independent random variables with a nice (for example, subgaussian) tail, then it is also possible to get bounds in probability for the infection spreading time.

\paragraph*{Other topologies.} A second natural direction is investigating more general topologies than those considered in this paper. In this respect, natural generalizations include families of graphs used to model short connections and the random networks used to model long-range ones. As for the former, a natural extension would be considering 2-dimensional grids. Already moving to this setting poses non-trivial challenges. For example, in this case, characterizing the spread over local clusters seems considerably harder, whereas this can be done exactly in rings. As for long-range connections, it would be interesting to investigate distributions in which the existence of an edge depends on the distance between the end-points in the underlying graph of local connections. While this is a natural generalization of the setting addressed in this paper, it might prove considerably more challenging.

%%
%% Bibliography
%%

%% Please use bibtex, 

% \input{./trunk-full/appendix.tex}

\end{document}